%%%%%%%%%%%%%%%%%%%%%%%%%%%%%%%%%%%%%%%%%%%%%%%%%%%%%%%%%%%%%%
%%% Probabilistic Constraint Optimization on Flow Networks %%%
%%%%%%%%%%%%%%%%%%%%%%%%%%%%%%%%%%%%%%%%%%%%%%%%%%%%%%%%%%%%%%

\documentclass[a4paper, english, 10pt, hidelinks]{article}

%%%%%%%%%%%%%%%%%%%
%%% Usepackages %%%
%%%%%%%%%%%%%%%%%%%

%\usepackage[latin1]{inputenc}		% Deutsche Tastatur
\usepackage[utf8]{inputenc}			% Deutsche Tastatur / Umlaute
\usepackage[T1]{fontenc} 			% Erweiterte Deutsche Sprache (Umlaute, Sonderzeichen etc)
\usepackage[english]{babel}
\usepackage{lmodern}
\usepackage[numbers]{natbib}    	% bibtex package

\usepackage{graphicx}				% Für Einbindung von Grafiken
\usepackage{subcaption}				% Figures und Subfigures
\usepackage{hyperref}				% Verlinkt das Inhaltsverzeichnis

\usepackage{geometry}				% Für Seitenränder
\geometry{left = 25mm, right = 25mm, top = 30mm, bottom = 30mm}

\usepackage{listings}				% Für Quellcode
\usepackage{mathcomp} 				% Für Matrizen
\usepackage{tabularx}				% Für Tabellen
\usepackage{amsmath} 				% Befehle für Formellayout
\usepackage{amssymb}				% Mathematische Sonderzeichen
\usepackage{bbold}					% Buchstaben/Zahlen mit Doppelstrich
\usepackage{amsthm}					% Theorem und Proof
\usepackage{varwidth}				% Für mittige Aufzählungen

\usepackage{tikz}							% Für TikZ-Bilder
\usetikzlibrary{calc,intersections}			
\usepackage{pgfplots}
\pgfplotsset{compat=1.11}
\usepgfplotslibrary{fillbetween}
\usetikzlibrary{patterns}
\usetikzlibrary{arrows.meta}

\usepackage{bbm}							% Für mathematische Symbole
\usepackage{multirow}					% Zeilen zusammenfassen in tabular-Umgebung
\usepackage{hyperref}

\usepackage{ulem}

\newtheorem{satz}{Theorem}
\newtheorem{definition}[satz]{Definition}

\newtheorem{lemma}[satz]{Lemma}
\newtheorem{remark}[satz]{Remark}
\newtheorem{corollary}[satz]{Corollary}

\DeclareMathOperator\erf{erf}

% For symbols of the footnotes

%%%%%%%%%%%%%%%%%%%%%%%%%%%%%
%%% Sonstige Commands etc %%%
%%%%%%%%%%%%%%%%%%%%%%%%%%%%%

\title{Probabilistic Constrained Optimization on Flow Networks}
\author{Michael Schuster\footnotemark[1], Elisa Strauch\footnotemark[2],
Martin Gugat\footnotemark[1], Jens Lang\footnotemark[2]}
\linespread{1.2}

%%%%%%%%%%%%%%%%%%%%%%%%%%%%%%%%
%%%%%%%%%%%%%%%%%%%%%%%%%%%%%%%%
%%%%% Beginn des Dokuments %%%%%
%%%%%%%%%%%%%%%%%%%%%%%%%%%%%%%%
%%%%%%%%%%%%%%%%%%%%%%%%%%%%%%%%

\begin{document}

\maketitle

%\vspace{1cm}

%\begingroup\linespread{1.5}\selectfont

%\noindent\textbf{\Large \textsf{Probabilistic Constrained Optimization on Flow Networks}}

%\endgroup

%\begin{center}
%\textbf{Michael Schuster\footnotemark[1], Elisa Strauch\footnotemark[2], Martin Gugat\footnotemark[1], Jens %Lang\footnotemark[2]} \\[5pt]
%\end{center}

\footnotetext[1]{Friedrich-Alexander University Erlangen-Nürnberg (FAU), Department of Mathematics, Cauerstr. 11, 91058 Erlangen, Germany, \textit{martin.gugat@fau.de}, \textit{michi.schuster@fau.de} \\ Correspondence should be addressed to Michael Schuster, \textit{michi.schuster@fau.de} \\[-5pt]}
\footnotetext[2]{Technical University of Darmstadt, Department of Mathematics, Dolivostr. 15, 64293 Darmstadt, Germany, \textit{lang@mathematik.tu-darmstadt.de}, \textit{strauch@mathematik.tu-darmstadt.de}}

%\maketitle

%%%%%%%%%%%%%%%%
%%% Abstract %%%
%%%%%%%%%%%%%%%%

\begin{center}
\begin{minipage}{15cm}
\paragraph{Abstract.} \small Uncertainty often plays an important role in dynamic flow problems. In this paper, we consider both, a stationary and a dynamic flow model with uncertain boundary data on networks. We introduce two different ways how to compute the probability for random boundary data to be feasible, discussing their advantages and disadvantages. In this context, feasible means, that the flow corresponding to the random boundary data meets some box constraints at the network junctions. The first method is the spheric radial decomposition and the second method is a kernel density estimation. \\
In both settings, we consider certain optimization problems and we compute derivatives of the probabilistic constraint using the kernel density estimator. Moreover, we derive necessary optimality conditions for an approximated problem for the stationary and the dynamic case. \\
Throughout the paper, we use numerical examples to illustrate our results by comparing them with a classical Monte Carlo approach to compute the desired probability.
\end{minipage}
\end{center}

\vspace{.5cm}

\noindent \small\textbf{Key words:} Stochastic Optimization, Probabilistic Constraints, Uncertain Boundary Data, Spheric Radial Decomposition, Kernel Density Estimator, Flow Networks, Gas Networks, Contamination of Water

\vspace{.5cm}

%%%%%%%%%%%%%%%%%%%%%%%%%%%%%%%%%%%
%%% Introduction and Motivation %%%
%%%%%%%%%%%%%%%%%%%%%%%%%%%%%%%%%%%

\large

\section{Introduction and motivation} \label{sec:introductionMotivation}

In this paper, we present a method which describes how to deal with uncertain loads in the context of flow networks. The modeling and simulation of flow networks like gas flow, water flow and the diffusion of harmful substances inside flow networks become more and more important. So in this paper, we analyze the gas flow through a pipeline network in a stationary setting and the contamination of water in a dynamic setting. The aim of this paper is to solve probabilistic constrained optimization problems and to derive necessary optimality conditions for them in the context of flow networks. \\

Gas transport resp. general flow problems have been a goal of many studies. In general, such a flow problem is modeled as a system of hyperbolic balance laws based on e.g. the isothermal Euler equations (for gas flow, see e.g. \cite{BandaHertyKlar2,BandaHertyKlar,  WeierstrassInstitut, GugatHanteHirschLeugering, GugatSchultzWintergerst, GugatSchuster}) or the shallow water equations (for water flow, see e.g. \cite{BastinCoronNovel, Coron, GugatLeugering,  GugatLeugeringGeorg, LeugeringGeorg}). The model in the stationary setting in this paper is based on the stationary isothermal Euler equations for modeling the gas flow through a pipeline network. In \cite{Gas-Modell:DomschkeHillerLangTischendorf2017, EvaluatingGasNetworkCapacities} one can find a great overview about the topic of gas transport, existing models, and network elements. The existence of a unique stationary state is shown in \cite{GugatHanteHirschLeugering}, the stationary states for real gas are analyzed in \cite{GugatSchultzWintergerst, GugatWintergerst}. The existence of solutions for the dynamic case have been analyzed in \cite{GugatUlbrich, GugatUlbrich2}. Optimal control problems in gas networks have been studied e.g. in \cite{BermudezGonzalezFernandez,ColomboGuerraHertySchleper,GugatHerty}. For the problem of contamination of water by harmful substances, we use a linear scalar hyperbolic balance law. This has also been analyzed in \cite{ FuegenschuhGoettlichHerty, GugatWater}. \\

An important aspect of this paper is that we consider random boundary data. In the context of gas transport, that means that the loads (i.e., the gas demand) are random. In the context of water contamination, that means that the contaminant injection is random. This leads to optimization problems with probabilistic constraints (see e.g. \cite{Prekopa, ShapiroDentechevaRuszczynski}). We also assume box constraints for the solution of the balance law at the network nodes and we define the set of feasible loads $M$ as all loads, for which the solution of the balance law meets these box constraints. Our aim is to compute the probability for a random load vector to be feasible, i.e., we want to compute the probability for a random vector to be in a certain set $M$. So we identify the load vector with some random vector $\xi$ on an appropriate probability space $(\Omega, \mathcal{A}, \mathbb{P})$ and we want to compute the probability
\begin{equation*}
	\mathbb{P}( \omega \in \Omega\ \vert\ \xi(\omega) \in M ),
\end{equation*}
which we write as
\begin{equation*}
	\mathbb{P}( \xi \in M ).
\end{equation*}
A direct approach to compute this probability is to integrate the probability density function of the balance law solution over the box constraints. However this density function may not be known. We use a \textit{kernel density estimator} (see e.g. \cite{Nadaraya,Parzen,  ScottTerrell}) to obtain an approximation of this function. In \cite{Duller, HaerdleWerwatzMuellerSperlich} one can get a great overview about the area of nonparametric statistics.
The authors in \cite{WeierstrassInstitut, GugatSchuster} use the spheric radial decomposition (see e.g. \cite{VanAckooijAleksovskaMunozZuniga, VanAckooijHenrion,  FarshbafShakerHenrionHoemberg, WeierstrassInstitut, GonzalezGradonHeitschHenrion}) for a similar gas flow problem to compute the desired probability.

\begin{satz} (\textbf{spheric radial decomposition, see \cite{WeierstrassInstitut}, Theorem 2)} \label{theorem:srd}
Let $ \xi \sim \mathcal{N}(0, R) $ be the $n$-dimensional standard Gaussian distribution with zero mean and positive definite correlation matrix $ R $. Then, for any Borel measurable subset $ M \subseteq \mathbb{R}^n $ it holds that
\begin{align} \label{eq:srd}
	\mathbb{P}(\xi \in M) = \int\limits_{\mathbb{S}^{n-1}} \mu_\chi \{ \hat{r} \geq 0 \vert \hat{r} L s \in M \} d \mu_\eta(s),
\end{align}
where $ \mathbb{S}^{n-1} $ is the $(n-1)$-dimensional sphere in $ \mathbb{R}^n $, $ \mu_\eta $ is the uniform distribution on $ \mathbb{S}^{n-1} $, $\mu_\chi $ denotes the $ \chi $-distribution with $ n $ degrees of freedom and $ L $ is such that $ R = L L^\top $ (e.g., Cholesky decomposition).
\end{satz}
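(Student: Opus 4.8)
The plan is to reduce the problem to the standard Gaussian and then exploit its rotational invariance via polar coordinates. First I would use the factorization $R = LL^\top$ to write $\xi = L\zeta$, where $\zeta \sim \mathcal{N}(0, I_n)$ is the $n$-dimensional standard normal vector. Since $R$ is positive definite, $L$ is invertible, so the event $\{\xi \in M\}$ coincides with $\{L\zeta \in M\}$ and hence $\mathbb{P}(\xi \in M) = \mathbb{E}[\mathbf{1}_M(L\zeta)]$. It therefore suffices to understand the law of $\zeta$.

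The core step is the polar decomposition of $\zeta$. I would introduce the map $(r,s) \mapsto rs$ from $[0,\infty) \times \mathbb{S}^{n-1}$ to $\mathbb{R}^n$, under which the Lebesgue volume element transforms as $dx = r^{n-1}\, dr\, d\sigma(s)$, with $d\sigma$ the surface measure on the sphere. Substituting into the standard Gaussian density $(2\pi)^{-n/2} e^{-|x|^2/2}$, the integrand factorizes as
\begin{equation*}
	(2\pi)^{-n/2} e^{-r^2/2}\, r^{n-1}\, dr\, d\sigma(s) = \left[ \frac{r^{n-1} e^{-r^2/2}}{2^{n/2 - 1}\,\Gamma(n/2)}\, dr \right] \left[ \frac{d\sigma(s)}{\omega_{n-1}} \right],
\end{equation*}
where $\omega_{n-1} = 2\pi^{n/2}/\Gamma(n/2)$ is the surface area of $\mathbb{S}^{n-1}$. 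The first bracket is exactly the density of the $\chi$-distribution $\mu_\chi$ with $n$ degrees of freedom, the second is the uniform distribution $\mu_\eta$ on $\mathbb{S}^{n-1}$, and a short computation confirms that the product of the two normalizing constants returns $(2\pi)^{-n/2}$. This shows that, in polar coordinates, the law of $\zeta$ is precisely the product measure $\mu_\chi \otimes \mu_\eta$; equivalently, the radius $|\zeta|$ and the direction $\zeta/|\zeta|$ are independent with the stated marginals.

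With this product structure established, the conclusion follows from Tonelli's theorem. Writing $\zeta = \hat r s$ and integrating the nonnegative integrand $\mathbf{1}_M(L\zeta) = \mathbf{1}_M(\hat r L s)$ against $\mu_\chi \otimes \mu_\eta$ yields
\begin{equation*}
	\mathbb{P}(\xi \in M) = \int_{\mathbb{S}^{n-1}} \int_0^\infty \mathbf{1}_M(\hat r L s)\, d\mu_\chi(\hat r)\, d\mu_\eta(s) = \int_{\mathbb{S}^{n-1}} \mu_\chi\{\, \hat r \geq 0 \mid \hat r L s \in M \,\}\, d\mu_\eta(s),
\end{equation*}
since the inner integral of an indicator is the $\mu_\chi$-measure of the set on which the indicator equals one; Borel measurability of $M$ guarantees that all the integrals are well defined.

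I expect the main obstacle to be the careful justification of the polar-coordinate factorization: verifying the $r^{n-1}$ Jacobian, matching the radial factor to the $\chi_n$ density, and checking that the two normalizing constants multiply back to $(2\pi)^{-n/2}$. Once this measure-theoretic identity is in place, the linear change $\xi = L\zeta$ and the Tonelli step are routine.
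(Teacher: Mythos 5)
Your proof is correct, but note that the paper does not actually prove this theorem: it is imported by citation from \cite{WeierstrassInstitut} (Theorem 2 there), so there is no internal proof to compare against. What you wrote is the standard self-contained derivation, and each step checks out. Defining $\zeta := L^{-1}\xi$ is legitimate because positive definiteness of $R = LL^\top$ forces $L$ to be invertible, and then $\zeta \sim \mathcal{N}(0, L^{-1}RL^{-\top}) = \mathcal{N}(0, I_n)$, so the identity $\xi = L\zeta$ holds exactly, not merely in distribution. The polar factorization is the crux, and your constants are right: the radial factor $r^{n-1}e^{-r^2/2}/\bigl(2^{n/2-1}\Gamma(n/2)\bigr)$ is precisely the $\chi_n$ density, the angular factor $d\sigma(s)/\omega_{n-1}$ with $\omega_{n-1} = 2\pi^{n/2}/\Gamma(n/2)$ is the uniform distribution $\mu_\eta$, and the product of the two normalizations is
\begin{equation*}
	\frac{1}{2^{n/2-1}\Gamma(n/2)} \cdot \frac{\Gamma(n/2)}{2\pi^{n/2}} = (2\pi)^{-n/2},
\end{equation*}
recovering the Gaussian constant, so radius and direction are indeed independent with the stated marginals. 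The final Tonelli step is valid since $\mathbf{1}_M(\hat r L s)$ is nonnegative and jointly measurable (composition of the Borel indicator with the continuous map $(\hat r, s) \mapsto \hat r L s$). In short: the argument is sound and fills in a proof the paper deliberately leaves to the cited reference.
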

This result can be applied to general Gaussian distributions easily: For $ \xi \sim \mathcal{N}(\mu, \Sigma) $, set $\xi^* = D^{-1} (\xi - \mu) \sim \mathcal{N}(0,R)$ with $D = \text{diag}\left( \sqrt{ \Sigma_{ii} } \right)$ and $R = D^{-1} \Sigma D^{-1}$. Then it follows $\mathbb{P}(\xi \in M) = \mathbb{P}(\xi^* \in D^{-1} (M - \mu))$. An algorithmic formulation of the spheric radial decomposition is given in \cite{WeierstrassInstitut, GugatSchuster}. \\

From now on we use SRD instead of spheric radial decomposition and KDE instead of kernel density estimator. The difference in both methods is shown in \hyperref[fig:KDEvsSRD]{\textit{Figure \ref*{fig:KDEvsSRD}}}.

\begin{figure}[htbp]
	\centering
	\begin{subfigure}[c]{5cm}
		\centering
		\includegraphics[width=5cm]{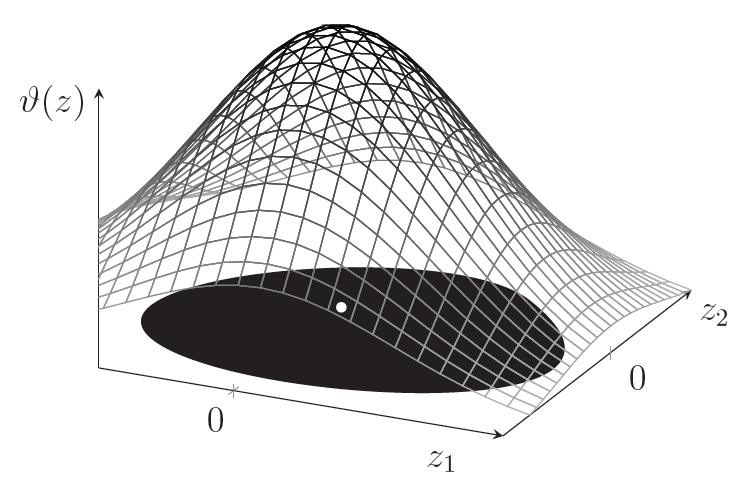}
		\caption{Direct Approach: Integrate the density function over a certain set}
	\end{subfigure} \hspace{1cm}
	\begin{subfigure}[c]{5cm}
		\centering
		\includegraphics[width=5cm]{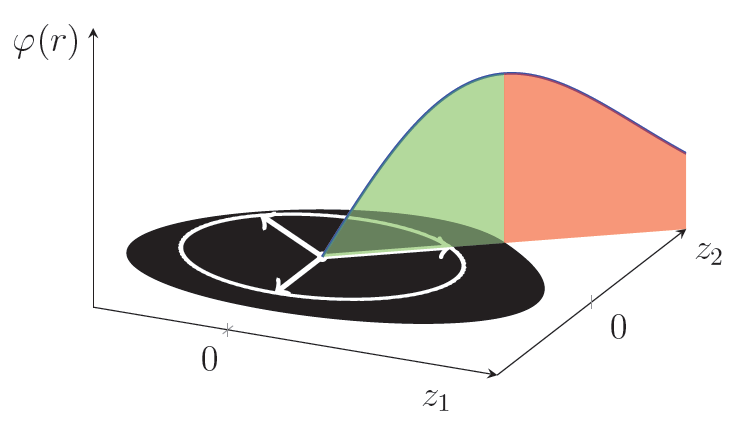}
		\caption{SRD: Integrate rays evaluated at the $\chi$-distribution over the unit sphere}
	\end{subfigure}
	\caption{Computing the probability for a random vector to be in a certain set: Direct approach vs. SRD}
	\label{fig:KDEvsSRD}
\end{figure}

An advantage of the SRD is that we exploit almost all information we can get from the model. Thus the only numerical error occurs while approximating the spherical integral. The big disadvantage of the SRD is that we need to know an analytical solution of our model which we cannot always guarantee. Therefore we introduce a KDE, which estimates the probability density function of a random variable by using a sampling set of the variable.
\begin{definition} (\textbf{kernel density estimator, see \cite{Gramacki}}) \label{def:KDE}
Let $y$ be a $n$-dimensional real-valued random variable with an absolutely continuous distribution and probability density function $\varrho$ with respect to the Lebesgue-measure. Moreover, let $\mathcal{Y} = \{ y^{S,1}, \cdots, y^{S,N} \}$ be an independent and identically distributed sample of $y$. Then, the kernel density estimator $\varrho_N:\mathbb{R}^n \rightarrow \mathbb{R}_{\geq 0}$ is defined as %corresponding to the bandwidth $h \in (0, \infty)$
\begin{equation*}
	\varrho_{N}(z) = \frac{1}{N \det(H)^{\frac{1}{2}}} \sum_{i=1}^N K \left( H^{-\frac{1}{2}} \left( z - y^{S,i}) \right) \right),
\end{equation*}
with a symmetric positive definite bandwidth matrix $H  \in \mathbb{R}^{n \times n}$ and a \mbox{$n$-variate} density $K:\mathbb{R}^n \rightarrow \mathbb{R}_{\geq 0}$ called kernel.
\end{definition}
We apply the kernel density estimation to the balance law solution.
If an analytical solution of the model is not known, we can compute the solution numerically and use a sampling set of approximated solutions.
Then the desired probability can be computed by integrating the kernel density estimator over the box constraints. That means, we get an approximation error but we can analytically work with numerical solutions of our model. A KDE approach was used in \cite{CaillauCerfSassiTrelatZidani} for solving chance constrained optimal control problems with ODE constraints. But it was neither used in optimal control problems with PDE constraints and random boundary data nor in the context of continuous optimization with hyperbolic balance laws on networks. Throughout this paper, we illustrate the idea of the KDE in both, the stationary and the dynamic case, such that we can state necessary optimality conditions for optimization problems with probabilistic constraints. This paper is structured as follows: \\

In the next section, we consider stationary gas networks, similar to \cite{WeierstrassInstitut, GugatSchuster}. We first consider a simple model on a graph with only one edge to explain the ideas of the SRD and the KDE. We compare both results in a numerical computation with a classical Monte Carlo method (All numerical tests have been done with MATLAB$\textsuperscript{\textregistered}$, version $2015$a). Next we use both methods, the SRD and the KDE, to solve a model on a general tree-structured graph and again we compare both methods with a classical Monte Carlo method. Finally, we state necessary optimality conditions for probabilistic constrained optimization problems related to our stationary model. Last in this section we solve a probabilistic constrained optimization problem on a realistic gas network setting. \\

In \hyperref[sec:dynamic]{\textit{Section \ref*{sec:dynamic}}}, we consider a dynamic water network, in which the contaminant injection occurs at the boundaries. We consider a general linear hyperbolic balance law, which models the diffusion of harmful substances on a linear graph in order to discuss probabilistic constraints in the time dependent case and, whether the SRD can be expanded to this case. We also use the KDE for this model. Finally, we state necessary optimality conditions for probabilistic constrained optimization problems related to our dynamic model and solve a probabilistic constrained optimization problem for a realistic water contamination network setting.

\section{Gas networks in a stationary setting} \label{sec:stationary}

In this section, we consider stationary states in gas networks. As mentioned before, the model here is similar to the model in \cite{WeierstrassInstitut, GugatSchuster}. The main difference is that we fix an inlet pressure for our model, which the authors in \cite{WeierstrassInstitut} and \cite{GugatSchuster} did not. The network is described by a connected, directed, tree-structured graph $G = (\mathcal{V}, \mathcal{E})$ (i.e., the graph does not contain cycles) with the vertex set $\mathcal{V} = \{v_0, \cdots, v_n \}$ and the set of edges $\mathcal{E} = \{e_1, \cdots,e_n \} \subseteq \mathcal{V} \times \mathcal{V}$.
We assume that the graph has only one inflow node $v_0$ and that the other nodes are outflow nodes.
Let node $v_0$ be the root of the graph orientated away from the root.
The graph is numbered from the root $v_0$ using breadth-first search or depth-first search. Every edge $e \in \mathcal{E}$ represents a pipe with positive length $L^e$. For $x \in [0,L^e]$ we consider the stationary semi-linear isothermal Euler equations for horizontal pipes and ideal gases
\begin{equation} \label{eq:isothermalEulerStationary} \left\{ \quad \begin{aligned}
	q^e_x(x) &= 0, \\
	(c^e)^2 p^e_x(x) &= - \frac{\lambda^e}{2D^e} (R_S T)^2 \frac{q^e(x) \vert q^e(x) \vert}{p^e(x)}.
\end{aligned} \right. \end{equation}
With $p^e = p\big|_{e}$ we represent the restriction of the pressure defined over the network to a single edge $e$ and $p^e_x$ resp. $q^e_x$ is the derivative of $p^e$ resp. $q^e$ w.r.t. $x$.
Here, $q^e$ is the flow along edge $e$ and $c^e, \lambda^e, D^e \in \mathbb{R}_{>0}$ denote the sound speed, the friction coefficient and the pipe diameter. The parameters $R_S$ and $T$ denote the specific gas constant of natural gas and the (constant) temperature.
Note that $q^e$ is constant on every edge. With $q^e \geq 0$ we denote that gas flows along the orientation of edge $e$ and with $q^e \leq 0$ we denote that gas flows against the orientation of edge $e$.\\
%%%%
%For the boundary conditions $p^e(0) = p^e_0$ and $q^e(L^e) = \textcolor{red}{m^e}$ a solution of (\ref{eq:isothermalEulerStationary}) is given by
%\begin{equation*} \label{eq:isothermalEulerStationarySolution} \left\{ \quad \begin{aligned}
%	q^e(x) &= \textcolor{red}{m}^e, \\
%	p^e(x) &= \sqrt{ (p^e_0)^2 - \frac{\lambda^e}{(c^e)^2 D^e} (R_S T)^2 q^e(x) \vert q^e(x) \vert x} .
%\end{aligned} \right. \end{equation*}
%\textcolor{red}{We denote with $m^e \geq 0$ that gas flows along the orientation of edge $e$ and with $m^e \leq 0$ that gas flows against the orientation of edge $e$.}
%%%
%
We consider conservation of mass for the flow at the nodes (cf. \textit{Kirchhoff's first law}). Let $\mathcal{E}_+(v)$ resp. $\mathcal{E}_-(v)$ be the set of all outgoing resp. ingoing edges at node $v \in \mathcal{V}$. Let $b^v \in \mathbb{R}$ be the load at node $v \in \mathcal{V}$. With $b^v \geq 0$ we denote that gas leaves the network at node $v$ (exit node) and with $b^v \leq 0$ that gas enters the network at node $v$ (entry node).
The equation for mass conservation for every node $v \in \mathcal{V}$ is given by
\begin{equation} \label{eq:kirchhoff}
	\sum_{e \in \mathcal{E}_-(v)} q^e (L^e) = b^v + \sum_{e \in \mathcal{E}_+(v)} q^e (0).
\end{equation}
%Note that $q^e$ is constant on every edge.
% We denote the set of inner vertices (junctions) by $\mathcal{V}_0 = \{ v \in \mathcal{V}: | \mathcal{E}_-(v)  \cup \mathcal{E}_+(v)| \geq 2 \}$.
% In addition, we assume continuity in pressure at every inner node, i.e. for all $e_1 \in \mathcal{E}_-(v)$ and $e_2 \in \mathcal{E}_+(v)$ it holds
%\begin{equation} \label{eq:momentumConservation}
%	p^{e_1}(L^{e_1}) = p^{e_2}(0) \quad \forall v \in \mathcal{V}_0.
%\end{equation}
%
Let $p_i$ denote the pressure at the node $v_i$ for $i = 0, \cdots, n$.
We assume continuity in pressure at every node, i.e., for all $v_i \in \mathcal{V}$  it holds
\begin{equation} \label{eq:momentumConservation}
\begin{aligned}
	p^{e_1}(L^{e_1}) &= p_i   \qquad \forall e_1 \in \mathcal{E}_-(v_i),\\
	 p^{e_2}(0) &= p_i \qquad \forall e_2 \in \mathcal{E}_+(v_i) .
\end{aligned}
\end{equation}
Therefore the pressure $p_i$ is defined by the pressure $p^e(L^e)$ resp. $p^e(0)$
with ingoing resp. outgoing edge $e$ at node $v_i$.
We consider (positive) box constraints for the pressures at all outflow nodes $v_1, \cdots, v_n$, s.t.
\begin{equation}
	p_i \in \left[ p_i^{\min}, p_i^{\max} \right] \quad \forall i \in \{ 1,\cdots,n\}.
\end{equation}
In addition, we impose pressure $p_0$ at node $v_0$. So for the full graph, we consider the following model:
%\begin{equation} \label{eq:stationaryModel} \left\{ \quad \begin{aligned}
%	&q^e_x(x) = 0  &\forall e \in \mathcal{E},\\
%	&(c^e)^2 p^e_x(x) = -\frac{\lambda^e}{2 D^e} (R_s T)^2 \frac{q^e(x) \vert q^e(x) \vert}{p^e(x)} &\forall e \in \mathcal{E},\\
%	&p^e(0) = p_0 & \forall e \in \mathcal{E}_+(v_0), \\
%	 & 	\forall i \in \{1,\cdots,n\}:
%		\begin{cases}
%			p^{e_1}(L^{e_1}) = p^{e_2}(0),  \hspace{1.5cm}\forall e_2 \in \mathcal{E}_+(v
%		_i), e_1 \in \mathcal{E}_-(v_i), \\
%			p_i \in \left[ p_i^{\min}, p_i^{\max} \right],  \\
%			\sum\limits_{e \in \mathcal{E}_-(v_i)} q^e(v_i) = b^{v_i} + \sum\limits_{e \in \mathcal{E}_+(v_i)} q^e(v_i).
%		\end{cases}
%		\end{aligned}
%\right. \end{equation}
%
%
%\begin{equation} \label{eq:stationaryModel} \left\{
%\begin{tabular}{l c l l}
%	$q_x^e(x)$ 							& $=$	& $0$																				& $\forall e \in \mathcal{E}$, \\
%	$(c^e)^2 p_x^e(x)$ 					& $=$	& $-\frac{\lambda^e}{2 D^e} (R_s T)^2 \frac{q^e(x) \vert q^e(x) \vert}{p^e(x)}$ 	& $\forall e \in \mathcal{E}$, \\
%	$p^e(0)$ 							& $=$	& $p_0$																				& $\forall e \in \mathcal{E}_+(v_0)$, \\
%										& 		& $\multirow{3}{*}{ $\left\{ \begin{tabular}{l} $p^{e_1}(L^{e_1}) = p^{e_2}(0)$ \\ $p_i \in [p_i^{\min}, p_i^{\max}]$ \\ $\sum\limits_{e \in \mathcal{E}_-(v_i)} q^e(v_i) = b^{v_i} + \sum\limits_{e \in \mathcal{E}_+(v_i)} q^e(v_i)$. \end{tabular} \right.$}$	& $\forall e_2 \in \mathcal{E}_+(v_i)$, \\
%	$\forall i \in \{1, \cdots, n \}$	& $:$	& 		& \hspace{.225cm}$e_1 \in \mathcal{E}_-(v_i)$, \\
%										& 		&		&
%\end{tabular} \right.
%\end{equation}

\begin{equation} \label{eq:stationaryModel} \left\{
\begin{tabular}{r c l l l}
	$q_x^e(x)$ 							& $=$	& $0$																				& & $\forall e \in \mathcal{E}$, \\
	$(c^e)^2 p_x^e(x)$ 					& $=$	& $-\frac{\lambda^e}{2 D^e} (R_s T)^2 \frac{q^e(x) \vert q^e(x) \vert}{p^e(x)}$ 	& & $\forall e \in \mathcal{E}$, \\[0.5cm]
	$p^e(0)$ 							& $=$	& $p_0$																				& & $\forall e \in \mathcal{E}_+(v_0)$, \\[0.5cm]
$\sum\limits_{e \in \mathcal{E}_-(v)} q^e(L^e)$	&  $=$  & $b^{v} + \sum\limits_{e \in \mathcal{E}_+(v)} q^e(0)$ 		
										& & $\forall v \in \mathcal{V}$ \\
$p^{e_1}(L^{e_1})$						&  $=$  & $p_i$ 		
										& & $\forall v_i \in \mathcal{V}, e_1 \in \mathcal{E}_-(v_i)$ \\
$p^{e_2}(0)$						&  $=$  & $p_i$ 		
										& & $\forall v_i \in \mathcal{V}, e_2 \in \mathcal{E}_+(v_i)$ \\[0.5cm]
$p_i$									&  $\in$  & $[p_i^{\min}, p_i^{\max}]$ 		
										& & $\forall i \in \{1, \cdots, n \}$.
\end{tabular} \right.
\end{equation}
\\
Our aim is now to find loads $b = (b^v)_{v \in \mathcal{V} \backslash \{ v_0\}} \in \mathbb{R}^n_{\geq 0}$  (corresponding to the outflow nodes $v_1, \cdots, v_n$), s.t. the model (\ref{eq:stationaryModel}) has a solution.
We do not consider the load $b^{v_0}$ at the inflow node $v_0$, because equation  \eqref{eq:kirchhoff} provides the relation $b^{v_0} = - \sum_{v \in \mathcal{V} \backslash \{ v_0\}} b^v$.
We define
\begin{equation*}
	M := \left\{\ b \in \mathbb{R}^n_{\geq 0}\ \vert\ \text{There exists a solution of (\ref{eq:stationaryModel}) } \right\}
\end{equation*}
as the set of feasible loads. To go one step further we assume that $b$ is random. This is motivated by reality. Because of the liberalization of the gas market\footnote[1]{\url{http://www.gesetze-im-internet.de/enwg_2005/index.html}}, the gas network is independent of the consumers and the gas companies. That means a gas network company must guarantee that the required gas can be transported through the network, but the company does not know the exact amount of gas a priori, so it can be seen as random. We assume
\begin{equation*}
	b \sim \mathcal{N}\left( \mu, \Sigma \right)
\end{equation*}
with mean value $\mu \in \mathbb{R}_+^n$ and positive definite covariance matrix $\Sigma \in \mathbb{R}^{n \times n}$ on an appropriate probability space $(\Omega, \mathcal{A}, \mathbb{P})$. Motivated by the application, we assume $\mu$ and $\Sigma$ are chosen s.t. the probability that $b$ takes positive values is almost $1$. This assumption would not be needed if the gas demand would be modeled e.g. by a truncated Gaussian distribution but we focus on the non truncated case in this work. In \cite{WeierstrassInstitut} and in \cite{EvaluatingGasNetworkCapacities}, \textit{Chapter 13}, the authors explain why a multivariate Gaussian distribution is a good choice for the random load vector. So  we want to know the probability that for a Gaussian distributed load vector $b$, the model (\ref{eq:stationaryModel}) has a solution, i.e.,
\begin{equation*}
	\mathbb{P}(b \in M).
\end{equation*}
In the next subsections we will use two different approaches for computing this probability. First, to explain the ideas of our approaches we consider a graph with only one edge and later we use the introduced tree-structured graph with one input.

\subsection{Uncertain load on a single edge} \label{sec:stationarySingleEdge}

%As mentioned before, we will give two different ways to compute the probability for a random load vector to be feasible. Here, we consider a single edge as graph. \textcolor{red}{For the boundary conditions $p(0) = p_0$ and $q(L) = b$ the solution of (\ref{eq:isothermalEulerStationary})  is given by
%\begin{equation}
%\label{eq:isothermalEulerStationarySolution} \left\{ \quad \begin{aligned}
%	q(x) &= b, \\
%	p(x) &= \sqrt{ p_0^2 - \frac{\lambda}{c^2 D} (R_S T)^2 q(x) \vert q(x) \vert x} .
%\end{aligned} \right. \end{equation}
%Therefore} our problem simplifies to computing the probability for a random load value $b \in \mathbb{R}_{\geq 0}$ to be feasible, i.e. to solve the following problem:
%\begin{equation} \left\{ \quad \begin{aligned}
%	&q(x) = b, \\
%	&p(x) = \sqrt{p_0^2 - \frac{\lambda}{c^2 D} (R_S T)^2 \ q(x) \vert q(x) \vert x}, \\
%	&p_1 = p(L) \in \left[ p^{\min}, p^{\max} \right].
%\end{aligned} \right. \end{equation}
%%%%%%%%%%%%%%

As mentioned before, we will give two different ways to compute the probability for a random load value to be feasible. Here, we consider a single edge as graph. For the boundary conditions $p(0) = p_0$ and $q(L) = b$ model (\ref{eq:isothermalEulerStationary}) has an  analytical solution. Therefore our problem simplifies to verifying the following inequalities:
\begin{equation} \label{eq:isothermalEulerStationarySolution} \left\{ \quad \begin{aligned}
	&q(x) = b, \\
	&p(x) = \sqrt{p_0^2 - \frac{\lambda}{c^2 D} (R_S T)^2 \ q(x) \vert q(x) \vert x}, \\
	&p_1 = p(L) \in \left[ p^{\min}, p^{\max} \right].
\end{aligned} \right. \end{equation}
In fact, that means a random value $b \in \mathbb{R}_{\geq 0}$ is feasible, if the pressure at the end satisfies the box constraints, i.e.
\begin{equation*}
	b \in M \quad \Leftrightarrow \quad b \geq 0\ \text{ and }\ p_1 = \sqrt{ p_0^2 - \phi\ b\ \vert b \vert\ } \in \left[ p^{\min}, p^{\max} \right]
\end{equation*}
with $\phi = \frac{\lambda}{c^2 D} (R_S T)^2 L$. We can rewrite the box constraints for the pressure as inequalities. That means $p_1 \in \left[ p^{\min}, p^{\max} \right]$ iff
\begin{equation} \label{eq:oneEdgeInequalities} \begin{aligned}
	(p^{\min})^2\ &\leq\ p_0^2 - \phi\ b\ \vert b \vert\ ,\\
	(p^{\max})^2\ &\geq\ p_0^2 - \phi\ b\ \vert b \vert\ .
\end{aligned} \end{equation}
Now we can use the SRD (introduced in \hyperref[sec:introductionMotivation]{\textit{Section \ref*{sec:introductionMotivation}}}) in an algorithmic way. We consider $b \sim \mathcal{N}(\mu, \sigma^2)$ with mean value $\mu \in \mathbb{R_+}$ and standard deviation $\sigma \in \mathbb{R}_+$. Because the unit sphere in this example is given by $\{ -1, 1 \}$, we need not to sample $N$ points, we can use the unit sphere itself as sampling. For $s \in \mathbb{S}^0$, we set
\begin{equation*}
	b_s(\hat{r}) = \hat{r} \sigma s + \mu.
\end{equation*}
Note that in the multivariate Gaussian distribution, the covariance $\Sigma$ is given, which contains the variances of the random variables on its diagonal. In the one dimensional case, the standard deviation is given, which is a Cholesky decomposition of the variance. \\
To guarantee, that $b_s(\hat{r})$ is positive, we define the regular range
\begin{equation*}
	R_{s,\text{reg}} := \{ \hat{r} \geq 0\ \vert\ b_s(\hat{r}) \geq 0 \}.
\end{equation*}
Thus we have
\begin{equation*}
	M_s = \{ \hat{r} \in R_{s,\text{reg}}\ \vert\ b_s(\hat{r}) \in M \}\ =\ \{ \hat{r} \in R_{s,\text{reg}}\ \vert\ b_s(\hat{r}) \text{ satisfies } (\ref{eq:oneEdgeInequalities}) \}.
\end{equation*}
We insert $b_s(\hat{r})$ in the inequalities in (\ref{eq:oneEdgeInequalities}). Since $b_s(\hat{r}) \geq 0$ on $R_{s,\text{reg}}$, we can write $b_s^2$ instead of $b_s \vert b_s \vert$. Thus we have quadratic inequalities in $r$:
\begin{equation*} \begin{aligned}
	&\hat{r}^2 \left( \sigma^2 s^2 \phi  \right) + \hat{r} \left( 2 \sigma s \mu \phi \right) + \left( \mu^2 \phi  + (p^{\min})^2 - p_0^2 \right) \leq 0, \\
	&\hat{r}^2 \left( \sigma^2 s^2 \phi \right) + \hat{r} \left( 2 \sigma s \mu \phi \right) + \left( \mu^2 \phi + (p^{\max})^2 - p_0^2 \right) \geq 0.
\end{aligned} \end{equation*}
So we can write the set $M_s$ as a union of $\kappa \in \mathbb{N}$ disjoint intervals, i.e.
\begin{equation*}
	M_s = \bigcup_{j=1}^{\kappa} I_{s,j}
\end{equation*}
with intervals $I_{s,j} = [ \underline{a}_{s,j}, \overline{a}_{s,j} ]$ and interval bounds $\underline{a}_{s,j}, \overline{a}_{s,j} \in \mathbb{R}$, $\underline{a}_{s,j} \leq \overline{a}_{s,j}$ ($j = 1, \cdots, \kappa$). Since the unit sphere contains only two values and
\begin{equation} \begin{aligned} \label{eq:computeProbabilityOneEdge}
	\mathbb{P}(b \in M)\ &=\ \frac{1}{2} \sum_{s \in \{-1,1\}} \mu_{\chi} (M_s)\ =\ \frac{1}{2} \sum_{s \in \{-1,1\}} \sum_{j=1}^{\kappa} \mu_{\chi}(I_{s,j}) \\
	&=\ \frac{1}{2} \sum_{s \in \{-1,1\}} \sum_{j=1}^{\kappa} \mathcal{F}_{\chi} \left( \overline{a}_{s,j} \right) - \mathcal{F}_{\chi} \left( \underline{a}_{s,j} \right),
\end{aligned} \end{equation}
where $\mathcal{F}_{\chi}(\cdot)$ is the cumulative distribution of the $\chi$-distribution, we can compute the probability for a random vector to be feasible. As mentioned before, the SRD gives us an efficient way to compute this probability, but we need to know the analytical solution of our system (\ref{eq:stationaryModel}). \\
Another way to compute the probability for a random load vector to be feasible is the \textit{kernel density estimator}.
It is more general and does not require the analytical solution of our model.
We consider the stochastic equation corresponding to (\ref{eq:isothermalEulerStationary}) with random boundary condition $b$ and coupling conditions (\ref{eq:kirchhoff}), (\ref{eq:momentumConservation}) which has also a solution $\mathbb{P}$-almost surely. Hence the pressure at node $v_1$ is a random variable which we denote with $p_1$.
The probability that a random load is feasible is equal to the probability that the pressure $p_1$ is in the interval $\left[ p^{\min}, p^{\max} \right]$.
We assume that the variance of $p_1$ is positive and that its distribution of the pressure $p_1$ is absolutely continuous with probability density function $\varrho_p$. Now, the probability $\mathbb{P}(p_1 \in \left[ p^{\min}, p^{\max} \right])$ can be computed by integrating the probability density function $\varrho_p$ over the pressure bound, so we get
%the probability $\mathbb{P}(p_1 \in \left[ p^{\min}, p^{\max} \right])$ is given by the integral of the probability density function $\varrho_p$ over the pressure bound, so we get
\begin{equation} %\label{eq:ProbMultiKDE1}
\mathbb{P}(b \in M ) = \mathbb{P}(p_1 \in [ p^{\min}, p^{\max} ]) =   \int_{p^{\min}}^{p^{\max}}  \varrho_p(z) ~ dz .
\end{equation}
If the exact probability density function $\varrho_p$ is not known, we can approximate the function by a kernel density estimator. Then we integrate this estimator over the interval $[ p^{\min}, p^{\max} ]$ to get an approximation of the probability $ {\mathbb{P}(b \in M) }$.\\
As before, we consider
\begin{equation*}
	b \sim \mathcal{N}(\mu, \sigma^2)
\end{equation*}
with mean value $\mu \in \mathbb{R_+}$ and standard deviation $\sigma \in \mathbb{R}_{+}$. \\
Let $\mathcal{B} = \{b^{\mathcal{S},1}, \cdots, b^{\mathcal{S},N}\} \subseteq \mathbb{R}_{\geq 0} $ be independent and identically distributed samples of the random variable $b$. Let $\mathcal{P}_{\mathcal{B}} = \{p_1(b^{\mathcal{S},1}), \cdots, p_1(b^{\mathcal{S},N}) \} \subseteq \mathbb{R}$ be the pressures at the end of the edge for the different loads $b^{\mathcal{S},i} \in \mathcal{B}$ ($i = 1, \cdots, N$), which are also independent and identically distributed. We use the KDE in one dimension with bandwidth $h \in \mathbb{R_+}$ and with a Gaussian kernel (see e.g. \cite{Duller, ScottTerrell}) to get an approximation of the density function $\rho_p$:
\begin{equation} \label{eq:KDE1d}
	\varrho_{p,N}(z) = \frac{1}{N h} \sum_{i = 1}^N \frac{1}{\sqrt{2 \pi}} \exp\left( - \frac{1}{2} \left( \frac{z - p_{1}(b^{\mathcal{S},i})}{h} \right)^2 \right).
\end{equation}
\begin{remark} \label{remark:bandwidth}
The choice of the bandwidth $h$ is a separate topic. We refer to \cite{Duller}, \textit{Chapter 8} and \cite{HaerdleWerwatzMuellerSperlich}, \textit{Chapter 3} for studies about optimal bandwidths for KDEs. Here we use the heuristic formula for the bandwidth given by
\begin{equation} \label{eq:bandwidth}
	h = 1.06 \frac{\sigma_N}{\sqrt[5]{N}},
\end{equation}
where $\sigma_N$ is the standard deviation of the sampling and $N$ is the number of samples. The idea is, that we compute the bandwidth depending on the variance of the sampling. This is stated and explained e.g. in \cite{Gramacki}, \textit{Chapter 4.2} and in \cite{Turlach}.
\end{remark}
%
%------------------------------ L infty convergence 1D --------------------------------
%
%With the choice of $h$ (stated in \hyperref[remark:bandwidth]{\textit{Remark \ref*{remark:bandwidth}}}), the sum $\sum_{N=1}^{\infty} \exp( -\gamma N h^2 )$ converges for every positive value of $\gamma$ and finite standard deviation $\sigma_N$ for $N \rightarrow \infty$.
%This follows from \textit{Raabe-Duhamels test}.
%If the probability density function $\varrho_p$ is uniformly continuous, then Nadarayas theorem (see \cite{Nadaraya}) guarantees, that the following fact holds:
%\begin{equation*}
% \sup_{z} \vert \varrho_p(z) - \varrho_{p,N}(z) \vert \xrightarrow{N \rightarrow \infty} 0 \quad \mathbb{P}\text{-almost surely.}
%\end{equation*}
%That means that the probability, that the $L^\infty$-error of the estimated density goes to zero with increasing number of samples, is $1$.
%
%
For the previous bandwidth it holds $(h + (Nh)^{-1}) \rightarrow 0$ $\mathbb{P}$-almost surely for ${ N \rightarrow \infty }$.
Therefore, \cite{DevroyeGyorfi} (\textit{Chapter 6, Theorem 1}) provides the L$^1$-convergence of the estimator:
\begin{equation} \label{eq:Konv1D}
	\Vert \varrho_p - \varrho_{p,N} \Vert_{L^1}  \xrightarrow{N \rightarrow \infty} 0 \quad \mathbb{P}\text{-almost surely}.
\end{equation}
%For the $L^1$-convergence, the uniform continuity of the probabilty density function is not necessary (but sufficient), but the distribution of the pressure has to be absolutely continuous.
Thus, for an appropriate choice of the bandwidth $h$, we can use the KDE as an approximation for the exact probability density function of the pressure.
From \textit{Scheff\'e}'\textit{s lemma} (see \cite{DevroyeGyorfi}), it follows
\begin{equation} \label{Konv2D}
\big | \int_{p^{\min}}^{p^{\max}} \varrho_p(z) ~ dz -\int_{p^{\min}}^{p^{\max}} \varrho_{p,N} (z) ~ dz \big | \leq \frac{1}{2} \Vert \varrho_p - \varrho_{p,N} \Vert_{L^1} \xrightarrow{N \rightarrow \infty} 0 \quad \mathbb{P}\text{-almost surely}.
\end{equation}
 So with (\ref{eq:KDE1d}) we can approximate the probability for a random load vector to be feasible as follows:
\begin{equation} \label{eq:probabilityKDE}
	\mathbb{P}(b \in M) = \mathbb{P}(p_1 \in [ p^{\min}, p^{\max} ]) \approx   \int_{p^{\min}}^{p^{\max}} \varrho_{p,N}(z) dz =: \mathbb{P}_N(b \in M).
\end{equation}
In this example, we can compute the sampling set $\mathcal{P}_{\mathcal{B}}$ analytically, because the analytical solution of model \eqref{eq:isothermalEulerStationary} is known.  If this is not the case, e.g., for more complex systems like the stationary full Euler equations or Navier-Stokes equations (see \cite{Gas-Modell:DomschkeHillerLangTischendorf2017}), one can use numerical methods to solve the PDE and to get an approximated sampling set $\mathcal{P}_{\mathcal{B}}$.

\paragraph*{Example 1:}\ We give an example to illustrate that the results of the KDE and the SRD are similar. Therefore we use the values (without units) from \hyperref[tab:oneEdgeValues]{\textit{Table \ref*{tab:oneEdgeValues}}}.
\begin{table}  [h]
	\centering
	\begin{tabular}{| c | c | c | c | c | c | c |}
			\hline
			$p_0$ 		& $p^{\min}$	& $p^{\max}$	& $\mu$		& $\sigma$	& $\phi$ \\	%& $L$	\\
			\hline
			$60$		& $40$ 			& $60$ 			& $4$		& $0.5$ 	% & $1$		
			& $100$	\\		
			\hline
	\end{tabular}
	\caption{Values for the example with one edge}
	\label{tab:oneEdgeValues}
\end{table}
With the inequalities (\ref{eq:oneEdgeInequalities}) and the values given in \hyperref[tab:oneEdgeValues]{\textit{Table \ref*{tab:oneEdgeValues}}} we can see that $b$ is feasible iff $b \in [0, \sqrt{20} ]$. For comparing the probability we use a classical Monte Carlo (MC) method, in which we check the percentage number of points inside $[0, \sqrt{20}]$. For both, the MC method and the KDE approach, we use the same sampling of $5 \cdot 10^4$ points. The bandwidth for the KDE is given by (\ref{eq:bandwidth}). The result for $8$ tests is shown in \hyperref[tab:oneEdgeResults]{\textit{Table \ref*{tab:oneEdgeResults}}}.

\begin{table} [h]
	\centering
	\begin{tabular}{| c | c | c | c | c | c | c | c | c |}
		\hline
			& Test 1	& Test 2	& Test 3	& Test 4	& Test 5	& Test 6	& Test 7	& Test 8	\\
		\hline
		MC	& $82.99\%$	& $82.88\%$	& $82.83\%$	& $82.95\%$	& $83.09\%$	& $82.77\%$	& $82.83\%$	& $82.58\%$	\\
		\hline		
		KDE	& $82.83\%$ & $82.75\%$	& $82.69\%$	& $82.74\%$	& $82.91\%$	& $82.58\%$	& $82.70\%$	& $82.43\%$	\\
		\hline
		%SRD	& $82.75\%$ & $82.75\%$ & $82.75\%$ & $82.75\%$ & $82.75\%$ & $82.75\%$ & $82.75\%$ & $82.75\%$ \\
		%\hline
		SRD & \multicolumn{8}{c |}{$82.75\%$} \\
		\hline
	\end{tabular}
	\caption{Results for the example with one edge}
	\label{tab:oneEdgeResults}
\end{table}

The sphere of the SRD in this example is finite ($\mathbb{S}^0 = \{-1,1\}$), so the SRD gives always the exact probability (except numerical errors due to quadrature) of $82.75\%$. Also, the probabilities computed by MC and the KDE are quite similar. The mean probability in MC resp. KDE is $82.87\%$ resp. $82.40\%$ and the variances are $0.0238$ resp. $0.0218$, which is very close to the (exact) result of the SRD. Further we provide confidence intervals with confidence level $95\%$ for the results stated in \hyperref[tab:oneEdgeResults]{\textit{Table \ref*{tab:oneEdgeResults}}}. The confidence level of $95\%$ is quite common in statistics. An introduction to confidence intervals can be found in \cite[Chapter 8]{Linde}. The confidence interval for the MC probabilities is $[82.74\%, 82.99\%]$ and the confidence interval for the KDE probability is $[82.58\%, 82.83\%]$. One can see that the (exact) probability (computed via SRD) is contained in both intervals. Thus one can see, that the KDE with the heuristic bandwidth (\ref{eq:bandwidth}) provides a suitable method to compute the desired probability. \\

A remaining question is how to choose the sample size s.t. the result is sufficiently accurate. In general it holds the larger the sample size the more accurate is the solution. A good sample size can be found by observing the sample error. In \cite{Roache} and \cite{Schwer} the authors suggest comparing the numerical result with two other numerical results for larger sample sizes. This mesh-to-mesh comparison is often used in numerics to determine the rate of convergence of the solution. With this approach an appropriate sample size can be chosen for the desired accuracy.

\subsection{Uncertain loads on tree-structured graphs} \label{sec:stationaryTree}

For this subsection, we consider a tree-structured graph (i.e., the graph does not contain cycles) $G = (\mathcal{V}, \mathcal{E})$ with one entry $v_0$, as introduced in the beginning of \hyperref[sec:stationary]{\textit{Section \ref*{sec:stationary}}}. Let the graph be numbered from the root $v_0$ by breadth-first search or depth-first search and let the model (\ref{eq:stationaryModel}) holds on every edge.

First we rewrite the system (\ref{eq:stationaryModel}) using the solution of the isothermal Euler equations. We use the incidence matrix $A^+ \in \mathbb{R}^{(n+1) \times n}$ of the graph. For an edge $e_\ell  \in \mathcal{E}$, which connects the nodes $v_i$ and $v_j$ starting from node $v_i$, we have
\begin{equation*}
	A^+_{k,\ell} = \begin{cases} \quad -1 & \text{ if } k = i, \\ \quad 1 & \text{ if } k = j, \\ \quad 0 & \text{ else}. \end{cases}
\end{equation*}
A formal definition of the incidence matrix can be found e.g., in \cite{WeierstrassInstitut, GugatSchuster, GugatSchultzSchuster}. We set $A \in \mathbb{R}^{n \times n}$ as $A^+$ without the first row (which corresponds to the root resp. the only entry node of the graph). The equation for mass conservation (\ref{eq:kirchhoff}) can equally be written as
\begin{equation} \label{eq:kirchhoffNew}
	A\ q = b,
\end{equation}
where $q_j$ is the (constant) flow on the edge $e_j$ and $b_i$ is the load at node $v_i$ ($i,j = 1, \cdots, n$). Due to the tree-structuredness of the graph, $A$ is a square matrix with full rank and thus invertible. Numbering the graph by breadth-first search or depth-first search implies that the matrix $A$ is upper triangular, just like its inverse. Further in \cite[Section 3.2]{WeierstrassInstitut}, the authors mention, that $A^{-1}_{i,j}$ is one if and only if the edge $e_j$ is on the (unique) path from the root to node $v_i$, otherwise $A^{-1}_{i,j}$ is zero. Motivated by (\ref{eq:isothermalEulerStationarySolution}) we define the function
\begin{equation*}
	g : \mathbb{R}^n \rightarrow \mathbb{R}^n,\quad g : b \mapsto (A^\top)^{-1} \Phi ~ \left( \left( A^{-1} b \right) \circ \vert A^{-1} b \vert \right), \\
\end{equation*}
where $\Phi \in \mathbb{R}^{n \times n}$ is a diagonal matrix with the values $\phi^e= \frac{\lambda^e}{(c^e)^2 D^e} (R_S T)^2 L^e $ ($e \in \mathcal{E}$) at its diagonal. The product on the right has to be understood componentwise. The $i$-th component of this function states the pressure loss from the root $v_0$ to node $v_i$. The term $(A^{-1} b)$ comes from the equation for mass conservation (\ref{eq:kirchhoffNew}) and contains the (constant) flows at the edges. With this function we get the following equivalence for feasible loads:

%%%
%\textcolor{red}{ Let $\Pi(k) \subset \mathcal{E}$ denote the unique path from the root $v_0$ to the node $v_k$ and $h(e) \in \mathcal{V}$ denote the head of edge $e$. We write $v_k \succsim v_\ell$, if the unique directed path $\Pi(k)$ passes through the node $v_\ell$.}
%We define the function
%\begin{equation*}
%	g : \mathbb{R}^n \rightarrow \mathbb{R}^n \text{ with }  g_k( b) =	\sum_{e \in \Pi(k)} \phi^e ~ \Biggl| \sum_{\substack{ v_i \in \mathcal{V}\backslash \{ v_0 \} \\[0.2em]  v_i \succsim h(e)   }} b_i \Biggr|  ~\sum_{\substack{v_i \in \mathcal{V}\backslash \{ v_0 \}  \\[0.2em] v_i \succsim h(e)   }} b_i \\
%	%(A^\top)^{-1} \Phi \left( \left( A^{-1} b \right) \circ \vert A^{-1} b \vert \right), \\
%\end{equation*}
%where $\phi^e = \frac{\lambda^e}{(c^e)^2 D^e} (R_S T)^2 L^e$.
%%The product on the right has to be understood component-by-component.
%The component $g_k$ states the pressure loss from the root to node $v_k$. With this function we \textcolor{red}{get the following equivalence for feasible loads:}
\begin{lemma} \label{lemma:inequalitiesStationary}
A load vector $b \in \mathbb{R}_{\geq 0}^n$ is feasible, i.e., there exists a solution of (\ref{eq:stationaryModel}), iff the following system of inequalities holds for all $k = 1, \cdots, n$:
\begin{equation*} \begin{aligned}
	p_0^2 &\leq (p_k^{\max})^2 + g_k(b), \\
	p_0^2 &\geq (p_k^{\min})^2 + g_k(b). \\
\end{aligned} \end{equation*}
\end{lemma}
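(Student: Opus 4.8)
The plan is to reduce the node-wise box constraints in (\ref{eq:stationaryModel}) to the stated inequalities by making the identity $g_k(b) = p_0^2 - p_k^2$ explicit, where $p_k$ is the pressure at node $v_k$ in any solution. First I would record a sign observation: since $b \in \mathbb{R}^n_{\geq 0}$ and $A^{-1}$ has entries in $\{0,1\}$ (it is one exactly on root-to-node paths), the flow vector $q = A^{-1}b$ from (\ref{eq:kirchhoffNew}) is componentwise non-negative. Hence $q^e \geq 0$ on every edge, so $q^e|q^e| = (q^e)^2$, and, crucially, the analytic pressure profile from (\ref{eq:isothermalEulerStationarySolution}) is monotonically non-increasing along every edge directed away from the root.

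Next I would assemble the squared nodal pressures by telescoping the single-edge drop along the unique root-to-$v_k$ path. On each edge $e$ the analytic solution gives $p^e(L^e)^2 = p^e(0)^2 - \phi^e q^e|q^e|$, and continuity of pressure (\ref{eq:momentumConservation}) glues these drops together along the path. The decisive bookkeeping step is to recognize that summing the edge contributions $\phi^e (q^e)^2$ over the root-to-$v_k$ path is precisely the action of $(A^\top)^{-1}$: because $A^{-1}_{i,j}$ equals one exactly when edge $e_j$ lies on the path from the root to $v_i$, the matrix $(A^\top)^{-1}$ aggregates the vector $\Phi\,(q \circ |q|)$ along these paths. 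This produces $p_0^2 - p_k^2 = g_k(b)$ for all $k$, which I regard as the heart of the argument and the one place where the tree structure and the chosen breadth-/depth-first numbering are genuinely used.

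With this identity in hand the equivalence is routine. Since the box bounds are positive, $p_k \in [p_k^{\min}, p_k^{\max}]$ holds iff $(p_k^{\min})^2 \leq p_k^2 \leq (p_k^{\max})^2$; substituting $p_k^2 = p_0^2 - g_k(b)$ and rearranging yields exactly the two inequalities of the statement. For the forward direction I would note that any solution is forced to have flows $q = A^{-1}b$ (as $A$ is invertible) and pressures determined edgewise by the upstream nodal pressure, so feasibility immediately forces the inequalities. For the converse I would construct the candidate by setting $q = A^{-1}b$ and $p_k = \sqrt{p_0^2 - g_k(b)}$; here I must verify that the radicands remain non-negative not only at the nodes but all along each edge. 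This is where the monotonicity from the first step pays off: because $p^2$ decreases along each edge and the nodal values already satisfy $p_k^2 \geq (p_k^{\min})^2 \geq 0$, the square root is well defined on the entire network, so the constructed functions solve (\ref{eq:stationaryModel}) and meet the box constraints.

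I expect the only real obstacle to be the matrix-identity step, namely checking carefully that $(A^\top)^{-1}$ realizes the path-summation so that $g_k(b)$ coincides with the cumulative squared-pressure loss from the root to $v_k$; the dropping of the absolute values and the squaring of the box constraints are both routine once non-negativity of the flows is secured.
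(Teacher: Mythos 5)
Your proposal is correct, but it takes a genuinely different route from the paper. The paper's own proof is a short reduction to an external result: it invokes \textit{Corollary 1} of \cite{WeierstrassInstitut}, which characterizes feasibility in the more general setting where the inlet pressure is only known to lie in a range $[p_0^{\min}, p_0^{\max}]$ via three inequalities, and then observes that in the present setting $p_0^{\min} = p_0^{\max} = p_0$, so the third inequality follows from the first two and the lemma is a special case. You instead reprove the underlying characterization from scratch. Your key identity $g_k(b) = p_0^2 - p_k^2$ is right, and your index bookkeeping checks out: since $[(A^\top)^{-1}]_{kj} = [A^{-1}]_{jk}$ and $[A^{-1}]_{jk} = 1$ exactly when edge $e_j$ lies on the unique root-to-$v_k$ path, the $k$-th component of $(A^\top)^{-1}\Phi\,(q \circ \vert q \vert)$ is precisely the telescoped squared-pressure drop along that path (note the paper itself states this path characterization of $A^{-1}$ with the indices transposed, so your version is the correct one). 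You also supply the two points the paper leaves implicit by outsourcing: in the necessity direction, that $A$'s invertibility forces $q = A^{-1}b$ and hence pins down all nodal pressures; and in the sufficiency direction, that $q = A^{-1}b \geq 0$ makes $(p^e)^2$ monotone non-increasing along each edge, so the positive lower bounds $p_k^{\min} > 0$ at the downstream nodes guarantee the constructed pressure profile is well defined (and positive) on entire edges, not merely at the endpoints. What your approach buys is a self-contained proof with an explicit solution construction; what the paper's approach buys is brevity and a direct link to the ranged-inlet-pressure result it generalizes from.
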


%\begin{proof}
%The result follows from \cite{WeierstrassInstitut}, \textit{Corollary 1}, if we set ${ p_0^{\min} = p_0^{\max} } $. Then due to
%\begin{equation*}
%	(p_j^{\min})^2 + g_j(b) \leq p_0^2 \leq (p_k^{\max})^2 + g_k(b)	\quad \text{for all } j,k = 1, \cdots, n,
%\end{equation*}
%the third inequality is redundant and \hyperref[lemma:inequalitiesStationary]{\textit{Lemma \ref*{lemma:inequalitiesStationary}}} is proven.
%\end{proof}

\begin{proof}
The result follows from \cite[Corollary 1]{WeierstrassInstitut}. In their setting, the inlet pressure $p_0$ is not given explicitly, it is given inside a range $[p_0^{\min}, p_0^{\max}]$. Then \textit{Corollary 1} in \cite{WeierstrassInstitut} states, that a load vector $b \in \mathbb{R}_{\geq 0}^n$ is feasible, iff the following system of inequalities holds:
\begin{equation*} \begin{aligned}
	p_0^{\min} &\leq \min_{k=1, \cdots, n} \left[ p_k^{\max} + g_k(b) \right] \\
	p_0^{\max} &\geq \max_{k=1, \cdots, n} \left[ p_k^{\min} + g_k(b) \right] \\
	\max_{k=1, \cdots, n} \left[ p_k^{\min} + g_k(b) \right] &\leq \min_{k=1, \cdots, n} \left[ p_k^{\max} + g_k(b) \right].
\end{aligned} \end{equation*}
In our setting the inlet pressure $p_0$ is explicitly given, which means $p_0^{\min} = p_0^{\max}$. Then the third inequality directly follows from the first one and the second one and thus, our lemma is a special case of \textit{Corollary 1} in \cite{WeierstrassInstitut}.
\end{proof}

\begin{lemma} \label{lemma:convexityStationary}
If $p_i^{\min} \leq p_0 \leq p_j^{\max}$ for all $i,j = 1, \cdots, n$, then the set of feasible loads $M$ is convex.
\end{lemma}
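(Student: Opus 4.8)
The plan is to invoke Lemma~\ref{lemma:inequalitiesStationary} to write $M$ as an intersection of elementary sets and then argue that each of these is convex. First I would rearrange the characterizing inequalities into the two-sided form
\[
 p_0^2 - (p_k^{\max})^2 \;\leq\; g_k(b) \;\leq\; p_0^2 - (p_k^{\min})^2, \qquad k = 1,\dots,n,
\]
so that
\[
 M = \mathbb{R}^n_{\geq 0} \cap \bigcap_{k=1}^n \bigl\{\, b : g_k(b) \leq p_0^2 - (p_k^{\min})^2 \,\bigr\} \cap \bigcap_{k=1}^n \bigl\{\, b : g_k(b) \geq p_0^2 - (p_k^{\max})^2 \,\bigr\}.
\]
The orthant $\mathbb{R}^n_{\geq 0}$ is convex, so the whole task reduces to understanding the level sets generated by the components $g_k$.

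The key observation is that, although the scalar map $t \mapsto t\,|t|$ is neither convex nor concave on all of $\mathbb{R}$, it agrees with the convex map $t \mapsto t^2$ on $[0,\infty)$. On the orthant the flows $q = A^{-1}b$ are nonnegative: the incidence structure makes $A^{-1}$ a matrix with entries in $\{0,1\}$, so each $(A^{-1}b)_i$ is a sum of nonnegative loads, and likewise $(A^\top)^{-1} = (A^{-1})^\top$ has nonnegative entries. Hence, for $b \in \mathbb{R}^n_{\geq 0}$,
\[
 g_k(b) = \sum_{i} \bigl((A^\top)^{-1}\bigr)_{k,i}\,\phi^{e_i}\,(A^{-1}b)_i\,\bigl|(A^{-1}b)_i\bigr| = \sum_{i} \bigl((A^\top)^{-1}\bigr)_{k,i}\,\phi^{e_i}\,\bigl((A^{-1}b)_i\bigr)^2,
\]
a nonnegative combination of squares of affine functions of $b$; therefore each $g_k$ is convex on $\mathbb{R}^n_{\geq 0}$ and, in particular, $g_k(b) \geq 0$ there.

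With this in hand the two families of constraints behave very differently. The lower inequalities $g_k(b) \geq p_0^2 - (p_k^{\max})^2$ turn out to be \emph{redundant}: the hypothesis $p_0 \leq p_k^{\max}$ yields $p_0^2 - (p_k^{\max})^2 \leq 0 \leq g_k(b)$ for every $b \in \mathbb{R}^n_{\geq 0}$, so these potentially non-convex superlevel sets impose no restriction and drop out. The upper inequalities $g_k(b) \leq p_0^2 - (p_k^{\min})^2$ are sublevel sets of the convex functions $g_k$ and hence convex (here the hypothesis $p_k^{\min} \leq p_0$ only serves to make the right-hand sides nonnegative, guaranteeing for instance that $b = 0 \in M$). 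Consequently $M$ is the intersection of the convex orthant with finitely many convex sublevel sets and is therefore convex.

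I expect the main obstacle to be the convexity step for $g_k$: one must resist treating $g$ as a single globally smooth nonlinearity and instead exploit that on the feasible orthant the term $q_i\,|q_i|$ collapses to the convex square $q_i^2$, which in turn rests on the sign structure $q = A^{-1}b \geq 0$ inherited from the tree and the $\{0,1\}$-pattern of $A^{-1}$. Recognizing that the lower bounds are inactive, and pinning down precisely where each half of the pressure hypothesis enters, is the conceptual heart of the argument, whereas the remaining set-theoretic intersection is routine.
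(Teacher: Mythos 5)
Your proof is correct, but it is self-contained where the paper's is not: the paper disposes of this lemma in three sentences by citing \textit{Theorem 11} of \cite{GugatSchultzSchuster} (convexity for linear graphs) and remarking that the one inequality which destroys convexity for tree-structured networks --- the coupling inequality $\max_k \left[ (p_k^{\min})^2 + g_k(b) \right] \leq \min_k \left[ (p_k^{\max})^2 + g_k(b) \right]$ from \textit{Corollary 1} of \cite{WeierstrassInstitut}, a difference-of-convex constraint --- becomes redundant once the inlet pressure $p_0$ is fixed. You instead reconstruct the underlying argument explicitly: using the $\{0,1\}$-pattern of $A^{-1}$ to get $q = A^{-1}b \geq 0$ on the orthant, so that $g_k$ coincides there with a nonnegative combination of squares of linear forms and is therefore convex and nonnegative; then the superlevel constraints $g_k(b) \geq p_0^2 - (p_k^{\max})^2$ drop out under the hypothesis $p_0 \leq p_k^{\max}$, and $M$ is an intersection of the orthant with convex sublevel sets. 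This is in substance the same mechanism the cited theorem rests on (redundancy of the non-convex constraints plus convexity of $g_k$ on the feasible orthant), so the mathematics agrees; what your version buys is verifiability within the paper itself and a sharper accounting of the hypotheses --- in particular your observation that $p_k^{\min} \leq p_0$ is never needed for convexity (an empty or smaller sublevel set is still convex) but only for nonemptiness of $M$, a point the paper's citation-style proof leaves invisible.
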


\begin{proof}
The proof is equal to the proof of \textit{Theorem 11} in \cite{GugatSchultzSchuster} for linear graphs, i.e., graphs without junctions. Here, the proof also works for tree-structured networks, because the pressure at the inflow node is explicitly given. Thus, the inequality, where convexity for tree-structured networks breaks in \cite{GugatSchultzSchuster}, is redundant here (see \hyperref[lemma:inequalitiesStationary]{\textit{Lemma \ref*{lemma:inequalitiesStationary}}}).
\end{proof}

Now we use the SRD to compute the probability for a random load vector to be feasible. For this setting, the SRD approach is explained in detail in \cite{WeierstrassInstitut, GugatSchuster}. Let $b \sim \mathcal{N}(\mu, \Sigma)$ with mean value $\mu \in \mathbb{R}_+^n$ and positive definite covariance matrix $\Sigma \in \mathbb{R}^{n \times n}$ for an appropriate probability space $(\Omega, \mathcal{A}, \mathbb{P})$ be given. The next steps are similar to the case of one edge. For a point $s$ of a sample $\mathcal{S} := \{s_1, \cdots, s_N \} \subseteq \mathbb{S}^{n-1}$ of $N \in \mathbb{N}$ uniformly distributed points at the unit sphere $\mathbb{S}^{n-1}$, we set
\begin{equation*}
	b_s(\hat{r}) := \hat{r} \mathcal{L} s + \mu,
\end{equation*}
where $\mathcal{L}$ is s.t. $\mathcal{L} \mathcal{L}^\top = \Sigma$. Because we only consider outflows, $b_s(\hat{r})$ must be positive. We define the regular range
\begin{equation*}
	R_{s, \text{reg}} := \{ \hat{r} \geq 0 \vert b_s(\hat{r}) \geq 0 \}.
\end{equation*}
From this, it follows that $b_s(\hat{r}) \in R_{s, \text{reg}}$ is feasible, iff $b_s(\hat{r})$ satisfies the inequalities in \hyperref[lemma:inequalitiesStationary]{\textit{Lemma \ref*{lemma:inequalitiesStationary}}}. The inequalities are quadratic in the variable $r$ and we can write the sets $M_s$ as unions of disjoint intervals. Thus, (\ref{eq:computeProbabilityOneEdge}) gives us the probability for a random load vector to be feasible. \\
As in the previous subsection, we consider the stochastic equation corresponding to \eqref{eq:isothermalEulerStationary} with random load $b$ and coupling conditions \eqref{eq:kirchhoff}, \eqref{eq:momentumConservation}. The $n$-dimensional random vector $p$ denotes the pressure at the nodes $v_1, \cdots, v_n$.
The probability that $b$ is feasible is equal to the probability that the pressure $p$ at the nodes is in the pressure bounds, which we denote by $\mathbb{P}(p \in P^{\max}_{\min})$ with $P^{\max}_{\min} := \bigotimes_{i=1}^n [p_i^{\min}, p_i^{\max}]$.
We assume that the covariance matrix of the random vector $p$ is positive definite and that its distribution is absolutely continuous with probability density function $\varrho_p$.
%Now, the probability $\mathbb{P}(p \in P^{\max}_{\min})$ can be computed by integrating the probability density function $\varrho_p$ over the pressure bounds, so
Now, it holds
\begin{equation} \label{eq:ProbMultiKDE}
\mathbb{P}(b \in M ) = \mathbb{P}(p \in P^{\max}_{\min}) = \int_{P^{\max}_{\min}}  \varrho_p(z) ~ dz .
\end{equation}
However the exact probability density function $\varrho_p$ is not known. But we can approximate the function using a multidimensional kernel density estimation. \\
Let $\mathcal{B} = \{b^{\mathcal{S},1}, \cdots, b^{\mathcal{S},N}\}$ be independent and identically distributed nonnegative samples of the random load vector $b \sim \mathcal{N}(\mu, \Sigma)$. Then, let $ \mathcal{P}_{\mathcal{B}} = \{ p(b^{\mathcal{S},1}), \cdots, p(^{\mathcal{S},N}) \} \subseteq \mathbb{R}^n  $ be the pressures at the nodes  $v_1, \cdots, v_n$ for the different loads $b^{S,i} \in \mathcal{B}$ $(i = 1,\cdots,N)$. These samples are also independent and identically distributed. Note that $p_j(b^{\mathcal{S},i})$ ($i \in \{1, \cdots, N\}$, $j \in \{1, \cdots, n\}$) is the pressure at node $v_j$ and the $j$-th component of the pressure vector $p(b^{\mathcal{S},i})$. We introduce the general $n$-dimensional multivariate kernel density estimator (see e.g. \cite{Gramacki}):
\begin{equation*}
	\varrho_{p,N}(z) = \frac{1}{N \det(H)^{\frac{1}{2}}} \sum_{i=1}^N K \left( H^{-\frac{1}{2}} \left( z - p(b^{\mathcal{S},i}) \right) \right)
\end{equation*}
with a symmetric positive definite bandwidth matrix $H  \in \mathbb{R}^{n \times n}$ and a \mbox{$n$-variate} density function $K:\mathbb{R}^n \rightarrow \mathbb{R}_{\geq 0}$ called kernel.
%
%standard multivariate normal density  BESSER? kürzer?
%
We choose the the standard multivariate normal density function as kernel. This kernel can be written as the product
$K(x) = \prod_{i=1}^n \mathcal{K}(x_i)$, where the univariate kernel ${ \mathcal{K}:\mathbb{R} \rightarrow \mathbb{R} }$ is the standard univariate normal density function.
Let $\sigma_{N,i}^2$ denote the positive sample variance of the $i$th variable. Moreover, let $V_N$ denote the diagonal matrix $V_N=\text{diag}(\sigma_{N,1}^2, \cdots, \sigma_{N,n}^2)$. As suggested in \cite{Gramacki}, we use the bandwidth matrix
\begin{equation} \label{eq:multiBandwidth}
	H = h_y^2 V_N \quad  \text{with} \quad h_y= \left( \frac{4}{(n+2)N}	 \right)^{\frac{1}{n+4}}  .
\end{equation}
This choice simplifies the estimator to the following form
\begin{equation}\label{eq:multiKDE}
\begin{aligned}
	\varrho_{p,N}(z) &= \frac{1}{N \prod_{j=1}^n h_y \sigma_{N,j}} \sum_{i=1}^N \prod_{j=1}^n \mathcal{K} \left( \frac{z_j - p_j(b^{\mathcal{S},i})}{h_y \sigma_{N,j}} \right) \\
	&= \frac{1}{N \prod_{j=1}^n h_y \sigma_{N,j}} \sum_{i=1}^N \prod_{j=1}^n \frac{1}{\sqrt{2 \pi}} \exp \left( -\frac{1}{2} \left( \frac{z_j - p_j(b^{\mathcal{S},i})}{h_y \sigma_{N,j}} \right)^2 \right).
\end{aligned}
\end{equation}
As mentioned in \cite{Scott, WandJones} such product kernels are recommended and adequate in practice. But, in some situations using only diagonal bandwidth matrices could be insufficient and then general full bandwidth matrices are needed, e.g., depending on the sample covariance matrix.  \\
In order to show the convergence of the multivariate KDE (\ref{eq:multiKDE}), we transform the random vector $p$ via $y = V_N^{-1/2} p$. Thus we get the transformed sampling set
\begin{equation*}
	\mathcal{Y}_\mathcal{B} = \{ y_1^S, \cdots, y_N^S \} := \{ V_N^{-1/2} p(b^{\mathcal{S},1}), \cdots, V_N^{-1/2} p(b^{\mathcal{S},N}) \} = V_N^{-\frac{1}{2}} \mathcal{P}_\mathcal{B}.
\end{equation*}
For the approximation of the probability density function of the transformed variable we use the multivariate KDE with the previous settings adapted to the data $\mathcal{Y}_B$. Thus we get the bandwidth matrix $H = h_y^2 I_{n \times n}$, because the transformed data have unit variance. This leads to a KDE with one single bandwidth $h_y$ given by
\begin{equation} \label{eq:multiKDEtrans}
	\varrho_{y,N}(z) = \frac{1}{N h_y^n} \sum_{i=1}^N \prod_{j=1}^n \mathcal{K} \left( \frac{z_j - y_{i,j}^S}{h_y} \right) .
\end{equation}
%
%The relation $\varrho_{y,N}(z) =  \varrho_{p,N}( V_N^{1/2} z)  \det(V_N)^{1/2}$ holds.
Due to $(h_y + (N h_y^n)^{-1} ) \xrightarrow{N \rightarrow \infty} 0$ $\mathbb{P}\text{-almost surely}$, we can apply \textit{Chapter 6, Theorem 1} in \cite{DevroyeGyorfi} to the estimator $\varrho_{y,N}$. Thus, it holds
\begin{equation*}
	\Vert \varrho_{y,N} - \varrho_y \Vert_{L^1} \xrightarrow{N \rightarrow \infty} 0 \quad \mathbb{P}\text{-almost surely},
\end{equation*}
where $\varrho_y$ is the exact probability density function of the random variable $y$. This density function is given by $\varrho_y(z) = \varrho_p ( V_N^{1/2} z ) \vert \det (V_N) \vert^{1/2}$ according to the transformation. There is also a similar relation between the estimators: $\varrho_{y,N}(z) =  \varrho_{p,N}( V_N^{1/2} z)  \det(V_N)^{1/2}$.
Using the previous relations the \mbox{$L^1$-convergence} for the KDE $\varrho_{p,N}$ follows:
\begin{equation} \label{KonvMehrdim}
	\Vert \varrho_{p,N} - \varrho_p \Vert_{L^1} = \Vert \varrho_{y,N} - \varrho_y \Vert_{L^1} \xrightarrow{N \rightarrow \infty} 0 \quad \mathbb{P}\text{-almost surely}.
\end{equation}
Applying \textit{Scheff\'e}'s \textit{lemma} (see \cite{DevroyeGyorfi}) yields
\begin{equation}  \label{ScheffeMehrdim}
	 \bigg \vert \int_{P^{\max}_{\min}} \varrho_p(z) ~ dz - \int_{P^{\max}_{\min}} \varrho_{p,N}(z) ~ dz \bigg \vert \leq \frac{1}{2} \Vert \varrho_{p,N} - \varrho_p \Vert_{L^1} \xrightarrow{N \rightarrow \infty} 0 \quad \mathbb{P}\text{-almost surely}	.
\end{equation}
% with $B \in \mathcal{B}(\mathbb{R}^n)$.
Thus, the integral of the estimator over the pressure bounds converges $\mathbb{P}\text{-almost}$ surely to the probability $\mathbb{P}(p \in P^{\max}_{\min})$. Now, we can use this integral as an approximation for the probability $\mathbb{P}( b \in M)$ in (\ref{eq:ProbMultiKDE}):
\begin{equation}
\mathbb{P}(b \in M) = \mathbb{P}(p \in P^{\max}_{\min}) \approx  \int_{P^{\max}_{\min}} \varrho_{p,N}(z) =: \mathbb{P}_N(b \in M) .
\end{equation}
This multidimensional integral has some useful properties, which we will need when we derive the necessary optimality conditions.
We want to mention again, that we can compute the sampling set $\mathcal{P}_{\mathcal{B}}$ analytically, because we know the solution of the stationary isothermal Euler equations.

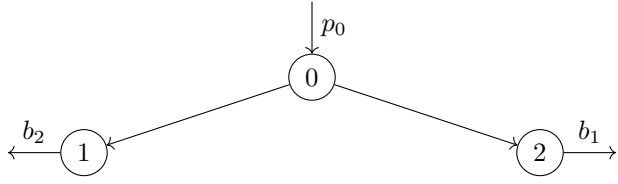
\begin{figure}[htbp]
	\centering
	\begin{tikzpicture}
		\node [minimum size=0.5cm] (A) at (0,0) [circle, draw] {$0$};
		\node [minimum size=0.5cm] (B) at (-3,-1) [circle, draw] {$1$};
		\node [minimum size=0.5cm] (C) at (3,-1) [circle, draw] {$2$};

		\draw[->] (A) to node[above] {$$} (B);
		\draw[->] (A) to node[above] {$$} (C);

		\draw[->] (0,1) to node[right] {$p_0$} (A);
		\draw[->] (C) to node[above] {$b_1$} (4,-1);
		\draw[->] (B) to node[above] {$b_2$} (-4,-1);
	\end{tikzpicture}
	\caption{Minimal tree-structured graph}
	\label{figure:ExampleMinimalTree}
\end{figure}

\paragraph*{Example 2:}\ This short example should illustrate the results of both approaches applied to the tree-structured graph with three nodes and two edges shown in \hyperref[figure:ExampleMinimalTree]{\textit{Figure \ref*{figure:ExampleMinimalTree}}}. The values (without units) are given in \hyperref[tab:twoEdgeValues]{\textit{Table \ref*{tab:twoEdgeValues}}}.

\begin{table}  [h]
	\centering
	\begin{tabular}{| c | c | c | c | c | c | c |}
			\hline
			$p_0$ 		& $p^{\min}$	& $p^{\max}$	& $\mu$		& $\Sigma$	& $\phi$ \\	%& $L$	\\
			\hline
			$60$		& $\begin{pmatrix} 40 \\ 30 \end{pmatrix}$	& $\begin{pmatrix} 60 \\ 50 \end{pmatrix}$ & $\begin{pmatrix} 4 \\ 4 \end{pmatrix}$ & $\begin{pmatrix} 0.25 & 0 \\ 0 & 0.25 \end{pmatrix}$	
			%& $\begin{pmatrix} 1 \\ 1 \end{pmatrix}$	
			& $\begin{pmatrix} 100 \\ 100 \end{pmatrix}$ 	\\		
			\hline
	\end{tabular}
	\caption{Values for the example with two edges}
	\label{tab:twoEdgeValues}
\end{table}

From the inequalities in \hyperref[lemma:inequalitiesStationary]{\textit{Lemma \ref*{lemma:inequalitiesStationary}}} we get
\begin{equation*}
	M = \left\{ b \in \mathbb{R}^2_{\geq 0}\ \bigg \vert \begin{pmatrix} 0 \\ \sqrt{11}	\end{pmatrix} \leq \begin{bmatrix} b_1 \\ b_2 \end{bmatrix}	 \leq \begin{pmatrix} \sqrt{20} \\ \sqrt{27} \end{pmatrix} \right\}.
\end{equation*}
As in the last example, we compare both methods, the SRD and the KDE, with a classical Monte Carlo (MC) method. For the MC method and the KDE approach, we use the same sampling of $1 \cdot 10^5$ points. The result for $8$ tests is shown in \hyperref[tab:twoEdgesResults]{\textit{Table \ref*{tab:twoEdgesResults}}}.

\begin{table} [h]
	\centering
	\begin{tabular}{| c | c | c | c | c | c | c | c | c |}
		\hline
			& Test 1	& Test 2	& Test 3	& Test 4	& Test 5	& Test 6	& Test 7	& Test 8	\\
		\hline
		MC	& $75.07\%$ & $75.15\%$ & $75.22\%$ & $75.08\%$ & $74.88\%$ & $75.02\%$ & $74.99\%$ & $75.39\%$ \\
		\hline
		KDE	& $74.75\%$ & $74.84\%$ & $74.89\%$ & $74.78\%$ & $74.55\%$ & $74.76\%$ & $74.67\%$ & $75.04\%$ \\
		\hline
		%SRD	& $74.95\%$ & $74.95\%$ & $74.95\%$ & $74.95\%$ & $74.95\%$ & $74.95\%$ & $74.95\%$ & $74.95\%$ \\
		%\hline
		SRD & \multicolumn{8}{c |}{$74.95\%$} \\
		\hline
	\end{tabular}
	\caption{Results for the example with two edges}
	\label{tab:twoEdgesResults}
\end{table}
The sampling of the SRD consists of $1 \cdot 10^4$ points uniform distribution of the sphere $\mathbb{S}^1$. Thus the SRD gives always the same (good) result of $74.95\%$, when rounded to $4$ digits. Again, the MC method and the KDE approach are quite close. The mean probability in MC resp. KDE is $75.10\%$ resp. $74.79\%$ and the variance is $0.0235$ resp. $0.0215$. As in the example with one edge we provide the confidence intervals for confidence level $95\%$ here. For the MC probability the confidence interval is $[74.97\%, 75.23\%]$ and for the KDE probability it is $[74.66\%, 74.91\%]$. The (good) result of the SRD close to both intervals. Thus, also in the two dimensional case, the KDE approach is quite good for computing the desired probability. The computing time in every test is quite reasonable. The computation time of the MC method and the KDE approach needs less than one second, while the SRD needs almost two seconds, but the focus of the implementation was on correctness, not on efficiency. So the computing time of the implementation of course can be improved.

\subsection{Stochastic optimization on stationary gas networks}

In this subsection, we formulate necessary conditions for optimization problems with approximated probabilistic constraints. Both, MC and SRD, give algorithmic ways to compute the probability for a random load vector to be feasible. With a KDE approach, which provides a sufficiently good approximation of the probability (if the sample size is sufficiently large), we can get necessary optimality conditions for certain optimization problems with approximated probabilistic constraints. Define the set
\begin{equation*}
	\mathcal{P}_0 := \bigotimes_{i=1}^n\ [p_i^{\min}, \infty) \ \subseteq \mathbb{R}^n,
\end{equation*}
and let a function
\begin{equation*}
	f : \mathbb{R}^n \times \mathbb{R} \rightarrow \mathbb{R},\quad (p^{\max}, p_0) \mapsto f(p^{\max}, p_0)
\end{equation*}
be given. For a probability level $\alpha \in (0,1)$ consider the optimization problems
\begin{equation} \label{eq:optimizationPressureBoundsStationary} \left\{ \quad \begin{aligned}
	\min_{p^{\max} \in \mathcal{P}_0} \quad &f(p_0, p^{\max}) \\
	\text{s.t.} \quad & \mathbb{P}(b \in M(p^{\max})) \geq \alpha,
\end{aligned} \right. \end{equation}
and
\begin{equation} \label{eq:optimizationInputPressureStationary} \left\{ \quad \begin{aligned}
	\min_{p_0 \in \mathbb{R}_{\geq 0}} \quad &f(p_0, p^{\max}) \\
	\text{s.t.} \quad & \mathbb{P}(b \in M(p_0)) \geq \alpha.
\end{aligned} \right. \end{equation}
Normally, $\alpha$ is chosen large, s.t. $\alpha$ is almost $1$. As mentioned before, our aim here is to write down the necessary optimality conditions in an appropriate way. In \hyperref[sec:stationarySingleEdge]{\textit{Section \ref*{sec:stationarySingleEdge}}} and \hyperref[sec:stationaryTree]{\textit{Section \ref*{sec:stationaryTree}}} we stated the $\mathbb{P}$-almost surely convergence of the KDE to the exact probability density. Further the numerical examples provide good and accurate results. In fact, we formulate the optimality conditions for the approximated optimization problems
\begin{equation} \label{eq:optimizationPressureBoundsStationaryApprox} \left\{ \quad \begin{aligned}
	\min_{p^{\max} \in \mathcal{P}_0} \quad &f(p_0, p^{\max}) \\
	\text{s.t.} \quad & \mathbb{P}_N(b \in M(p^{\max})) \geq \alpha,
\end{aligned} \right. \end{equation}
and
\begin{equation} \label{eq:optimizationInputPressureStationaryApprox} \left\{ \quad \begin{aligned}
	\min_{p_0 \in \mathbb{R}_{\geq 0}} \quad &f(p_0, p^{\max}) \\
	\text{s.t.} \quad & \mathbb{P}_N(b \in M(p_0)) \geq \alpha.
\end{aligned} \right. \end{equation}
We mention again that due to the convergence results stated before, the approximated probabilistic constraint converges $\mathbb{P}$-almost surely to the exact probabilistic constraint for $N \rightarrow \infty$. We define
\begin{equation*}
	P^{\max}_{\min} := \bigotimes_{i=1}^n [p_i^{\min}, p_i^{\max}].
\end{equation*}
Our aim is now to integrate the kernel density estimator of the pressure at the nodes $v_1, \cdots, v_n$ over the pressure bounds. Since the following computations hold for the probability in (\ref{eq:optimizationPressureBoundsStationaryApprox}) as well as in (\ref{eq:optimizationInputPressureStationaryApprox}), we neglect the argument of $M$ from here on. We have
\begin{equation*} \begin{aligned}
	\mathbb{P}_N(b \in M) &= \int_{P^{\max}_{\min}} \varrho_{p,N}(z) dz \\
	&= \frac{1}{N \prod_{j=1}^n h_j} \sum_{i=1}^N \int_{P^{\max}_{\min}} \prod_{j=1}^n \frac{1}{\sqrt{2 \pi}} \exp \left( -\frac{1}{2} \left( \frac{z_j - p_j(b^{\mathcal{S},i})}{h_j} \right)^2 \right) d z,
\end{aligned} \end{equation*}
with $\varrho_{p,N}$ as in (\ref{eq:multiKDE}). Since $P^{\max}_{\min}$ is a $n$-dimensional cuboid and $\varrho_{p,N}(z)$ is continuous we can use \textit{Fubini's Theorem}. Thus we have
\begin{gather*}
		%\int_{P^{\max}_{\min}} \varrho_{p,N}(z) d z
	\mathbb{P}_N(b \in M) = \\
	= \frac{1}{N \prod_{j=1}^n h_j} \sum_{i=1}^N \int_{p_1^{\min}}^{p_1^{\max}} \cdots \int_{p_n^{\min}}^{p_n^{\max}} \prod_{j=1}^n \frac{1}{\sqrt{2 \pi}} \exp \left( -\frac{1}{2} \left( \frac{z_j - p_j(b^{\mathcal{S},i})}{h_j} \right)^2 \right) d z_n \cdots d z_1,
\end{gather*}
and as the density estimation of the pressure is a product of an exponential function in every dimension, we can exchange the integral and the product. It follows
\begin{equation*} \begin{aligned}
	%\int_{P^{\max}_{\min}} \varrho_{p,N}(z) d z
\mathbb{P}_N(b \in M) 	
	&= \frac{1}{N \prod_{j=1}^n h_j} \sum_{i=1}^N \prod_{j=1}^n \int_{p_j^{\min}}^{p_j^{\max}} \frac{1}{\sqrt{2 \pi}} \exp \left( -\frac{1}{2} \left( \frac{z_j - p_j(b^{\mathcal{S},i})}{h_j} \right)^2 \right) d z_j.
\end{aligned} \end{equation*}
We define
\begin{equation*}
	\varphi_{i,j} : \mathbb{R} \rightarrow \mathbb{R}, \quad \varphi_{i,j} : x \mapsto \left( \frac{x - p_j(b^{\mathcal{S},i})}{\sqrt{2} h_j} \right), \\
\end{equation*}
and we set $\tau_{i,j} := \varphi_{i,j}(z_j)$ and use integration by substitution. Then with $\varphi'_{i,j}(x) = (\sqrt{2}\ h_j)^{-1}$ we get
\begin{equation*} \begin{aligned}
	%\int_{P^{\max}_{\min}} \varrho_{p,N}(z) dz
\mathbb{P}_N(b \in M) 	
	&= \frac{1}{N \prod_{j=1}^n h_j} \sum_{i=1}^N \prod_{j=1}^n \int_{p_j^{\min}}^{p_j^{\max}} \frac{1}{\sqrt{2 \pi}} \exp \left( -\varphi^2_{i,j}(z_j) \right) d z_j \\
	&= \frac{1}{N \prod_{j=1}^n h_j} \sum_{i=1}^N \prod_{j=1}^n \int_{\varphi_{i,j}(p_j^{\min})}^{\varphi_{i,j}(p_j^{\max})} \frac{1}{\sqrt{2 \pi}} \exp \left( -\tau^2_{i,j} \right) \sqrt{2}\ h_j\ d \tau_{i,j} \\
	&= \frac{1}{N} \sum_{i=1}^N \prod_{j=1}^n \int_{\varphi_{i,j}(p_j^{\min})}^{\varphi_{i,j}(p_j^{\max})} \frac{1}{\sqrt{\pi}} \exp \left( -\tau^2_{i,j} \right) d \tau_{i,j}.
\end{aligned} \end{equation*}
This formula contains the \textit{Gauss error function} (see e.g. \cite{Andrews}):
\begin{equation} \label{eq:GaussErrorFunction}
	\erf(x) := \frac{2}{\sqrt{\pi}} \int_0^x \exp \left( -t^2 \right) dt.
\end{equation}

We insert the Gauss error function in the previous integral term and we obtain
\begin{equation} \begin{gathered} \label{eq:probabilityKDEStationaryErf}
\begin{aligned}
	\mathbb{P}_N(b \in M) &= \int_{P_{\min}^{\max}} \varrho_{p,N}(z) dz  \\ &= \frac{1}{N} \frac{1}{2^n} \sum_{i=1}^N \prod_{j=1}^n \left[ \erf \left( \varphi_{i,j}(p_j^{\max}) \right) - \erf \left( \varphi_{i,j}(p_j^{\min}) \right) \right].
	\end{aligned}
\end{gathered} \end{equation}
Now we consider problem (\ref{eq:optimizationPressureBoundsStationaryApprox}). For $\alpha \in (0,1)$ we define
\begin{equation*}
	g_{\alpha} : \mathbb{R}^n \rightarrow \mathbb{R},\quad p^{\max} \mapsto \alpha - \mathbb{P}_N(b \in M(p^{\max})).
\end{equation*}
Thus we have
\begin{equation*}
	g_\alpha(p^{\max}) = \alpha - \frac{1}{N} \sum_{i=1}^N \prod_{j=1}^n \int_{ \varphi_{i,j}(p_j^{\min}) }^{ \varphi_{i,j}(p_j^{\max}) } \frac{1}{\sqrt{\pi}} \exp\left( -\tau_{i,j}^2 \right) d \tau_{i,j}.
\end{equation*}
We compute the partial derivatives of $g_\alpha$. For $k \in \{1, \cdots, n\}$ we have
\begin{equation*} \begin{aligned}
	\frac{\partial}{\partial p_k^{\max}} g_\alpha(p^{\max}) = - \frac{1}{N} \sum_{i=1}^N \left[ \prod_{j=1, j \neq k}^n \int_{ \varphi_{i,j}(p_j^{\min}) }^{ \varphi_{i,j}(p_j^{\max}) } \frac{1}{\sqrt{\pi}} \exp\left( -\tau_{i,j}^2 \right) d \tau_{i,j} \right. \\
	\left. \cdot \frac{1}{\sqrt{\pi}} \exp\left( - \varphi^2_{i,k}(p_k^{\max}) \right) \frac{1}{\sqrt{2} h_k} \right],
\end{aligned} \end{equation*}
and with the Gauss error function (\ref{eq:GaussErrorFunction}) it follows
\begin{equation} \begin{aligned} \label{eq:partialDerivativePressureBoundStationary}
	\frac{\partial}{\partial p_k^{\max}} g_\alpha(p^{\max}) = - \frac{1}{N} \frac{1}{2^n} \sum_{i=1}^N \left[ \prod_{j=1, j\neq k}^n \left[ \erf \left( \varphi_{i,j}(p_j^{\max}) \right) - \erf \left( \varphi_{i,j}(p_j^{\min}) \right) \right] \right. \\
	\left. \cdot \frac{\sqrt{2}}{\sqrt{\pi} h_k} \exp \left( - \varphi_{i,k}^2(p_k^{\max}) \right) \right].
\end{aligned} \end{equation}
Then, the $k$-th component of the gradient $\nabla g_\alpha(p^{\max}) \in \mathbb{R}^n$ is given by (\ref{eq:partialDerivativePressureBoundStationary}). Note that for $b^{S,1}, \cdots, b^{S,N} \in \mathbb{R}^n$ ($N > 1$) and $p_i^{\max} > p_i^{\min}$ ($i = 1,\cdots,n$), the partial derivatives in (\ref{eq:partialDerivativePressureBoundStationary}) are negative for all $p^{\max} \in \mathbb{R}^n$.
\begin{remark} \label{remark:mfcq}
Since (\ref{eq:optimizationPressureBoundsStationaryApprox}) has only one constraint, the \textit{linear independent constraint qualification} (LICQ) holds for every $\tilde{p}^{\max} > p^{\min}$ (componentwise) with $g_\alpha(\tilde{p}^{\max}) = 0$. \\
\end{remark}

Now we can state necessary optimality conditions for the optimization problem (\ref{eq:optimizationPressureBoundsStationaryApprox}):
\begin{corollary} \label{corollary:pMax}
Let $p^{*,\max} \in \mathbb{R}^n$ be a (local) optimal solution of (\ref{eq:optimizationPressureBoundsStationaryApprox}). Since the LICQ holds in $p^{*,\max}$, there exists a multiplier $\mu^* \geq 0$, s.t.
\begin{equation*} \begin{aligned}
	\nabla_{p^{\max}} f(p^{*,\max}, p_0) + \mu^* \nabla g_\alpha(p^{*,\max}) &= 0, \\
	g_\alpha(p^{*,\max}) &\leq 0, \\
	\mu^* g_\alpha(p^{*,\max}) &= 0.
\end{aligned} \end{equation*}
Thus, $(p^{*,\max}, \mu^*) \in \mathbb{R}^{n+1}$ is a \textit{Karush-Kuhn-Tucker point}.
\end{corollary}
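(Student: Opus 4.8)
The plan is to read Corollary \ref{corollary:pMax} as nothing more than the classical Karush--Kuhn--Tucker necessary conditions applied to problem (\ref{eq:optimizationPressureBoundsStationaryApprox}). The work therefore reduces to verifying the two standing hypotheses of that theorem, namely continuous differentiability of the objective and of the constraint function, together with a constraint qualification at the candidate minimizer; once both are in place, the three displayed relations are exactly the stationarity, primal-feasibility, and complementary-slackness statements produced by the theorem, so $(p^{*,\max}, \mu^*)$ is a KKT point.

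First I would check the regularity. The objective $f$ is taken to be differentiable in its arguments, which is implicit in writing $\nabla_{p^{\max}} f$. For the constraint, writing $g_\alpha(p^{\max}) = \alpha - \mathbb{P}_N(b \in M(p^{\max}))$ and invoking the closed form (\ref{eq:probabilityKDEStationaryErf}), one sees that $g_\alpha$ is a finite sum over the sample of products of differences $\erf(\varphi_{i,j}(p_j^{\max})) - \erf(\varphi_{i,j}(p_j^{\min}))$. Because $\erf$ is $C^\infty$ and each argument $\varphi_{i,j}$ is affine in $p^{\max}$, the function $g_\alpha$ is continuously differentiable, and its gradient is given componentwise by the already-derived expression (\ref{eq:partialDerivativePressureBoundStationary}). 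Next I would invoke the constraint qualification: at the (local) optimum $p^{*,\max}$ the only possibly active inequality is the probabilistic one, and (\ref{eq:partialDerivativePressureBoundStationary}) shows that every component of $\nabla g_\alpha$ is strictly negative when $N > 1$ and $p_i^{\max} > p_i^{\min}$, so $\nabla g_\alpha(p^{*,\max}) \neq 0$. Hence the family of active-constraint gradients is linearly independent, i.e.\ LICQ holds, precisely as recorded in Remark \ref{remark:mfcq}. The KKT theorem then yields a multiplier $\mu^* \geq 0$ with the three stated conditions.

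The point I would be most careful about is the membership constraint $p^{\max} \in \mathcal{P}_0 = \bigotimes_{i=1}^n [p_i^{\min}, \infty)$. Were the optimum to lie on the boundary of $\mathcal{P}_0$, the active lower bounds $p_k^{\max} = p_k^{\min}$ would contribute their own multipliers and the optimality system would carry more than the single $\mu^*$. The corollary avoids this by working at an interior point, which is exactly the regime $\tilde{p}^{\max} > p^{\min}$ (componentwise) under which Remark \ref{remark:mfcq} asserts LICQ; there the box constraints are locally inactive and the probabilistic constraint is the only one entering the first-order system. I would therefore make this interiority explicit, so that the appearance of a single multiplier $\mu^*$ is justified rather than silently assumed.
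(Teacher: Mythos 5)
Your proposal is correct and takes essentially the same route as the paper: the corollary is read as a direct application of the classical KKT theorem, with differentiability of $g_\alpha$ coming from the $\erf$-based closed form (\ref{eq:probabilityKDEStationaryErf}) and LICQ coming from the strict negativity of the partial derivatives in (\ref{eq:partialDerivativePressureBoundStationary}), exactly as recorded in Remark \ref{remark:mfcq}. Your explicit attention to interiority with respect to $\mathcal{P}_0$ (so that no multipliers for the bounds $p_k^{\max} \geq p_k^{\min}$ appear) is a sound refinement of a point the paper treats only implicitly through the condition $\tilde{p}^{\max} > p^{\min}$ in that remark.
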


Now we consider problem (\ref{eq:optimizationInputPressureStationaryApprox}). We slightly change the notation to add the explicit dependence on $p_0$, so we write $p(b^{\mathcal{S},i},p_0)$ ($i = 1, \cdots, N$) instead of $p(b^{\mathcal{S},i})$ for the samples in the set $\mathcal{P}_{\mathcal{B}}$. We redefine the function $\varphi_{i,j}$ ($i = 1, \cdots, N$, $j = 1, \cdots, n$) as
\begin{equation*}
	\varphi_{i,j}: \mathbb{R} \times \mathbb{R} \rightarrow \mathbb{R} \quad (x,y) \mapsto \left( \frac{x - p_j(b^{\mathcal{S},i},y)}{\sqrt{2} h_j} \right),
\end{equation*}
and we define the constraint of (\ref{eq:optimizationInputPressureStationaryApprox}) as
\begin{equation*}
	\gamma_\alpha : \mathbb{R} \rightarrow \mathbb{R} \quad p_0 \mapsto \alpha - \mathbb{P}_N(b \in M(p_0)).
\end{equation*}
Thus we have
\begin{equation*}
	\gamma_\alpha(p_0) = \alpha - \frac{1}{N \prod_{j=1}^n h_j} \sum_{i=1}^N \prod_{j=1}^n \int_{p_j^{\min}}^{p_j^{\max}} \frac{1}{\sqrt{2 \pi}} \exp\left( - \varphi_{i,j}^2(z_j, p_0) \right) d z_j.
\end{equation*}
For the derivative with respect to $p_0$, it follows
\begin{equation*} \begin{aligned}
	\frac{d}{d p_0} \gamma_\alpha(p_0) &= - \frac{1}{N \prod_{j=1}^n h_j} \sum_{i=1}^N \frac{d}{d p_0} \left( \prod_{j=1}^n \int_{p_j^{\min}}^{p_j^{\max}} \frac{1}{\sqrt{2 \pi}} \exp\left( - \varphi_{i,j}^2(z_j, p_0) \right) d z_j \right) \\
	&= - \frac{1}{N \prod_{j=1}^n h_j} \sum_{i=1}^N \sum_{k=1}^n \prod_{j=1, j\neq k}^n \int_{p_j^{\min}}^{p_j^{\max}} \frac{1}{\sqrt{2 \pi}} \exp\left( - \varphi_{i,j}^2(z_j, p_0) \right) d z_j \\
	&\hspace{4cm}\cdot \frac{d}{d p_0} \int_{p_k^{\min}}^{p_k^{\max}} \frac{1}{\sqrt{2 \pi}} \exp\left( - \varphi_{i,k}^2(z_k, p_0) \right) d z_k.
\end{aligned} \end{equation*}
Due to the \textit{dominated convergence theorem} we can exchange the integral and the derivative, thus we have
\begin{equation*} \begin{aligned}
	\frac{d}{d p_0} \gamma_\alpha(p_0) &= - \frac{1}{N \prod_{j=1}^n h_j} \sum_{i=1}^N \sum_{k=1}^n \prod_{j=1, j\neq k}^n \int_{p_j^{\min}}^{p_j^{\max}} \frac{1}{\sqrt{2 \pi}} \exp\left( - \varphi_{i,j}^2(z_j, p_0) \right) d z_j \\
	&\hspace{4cm}\cdot \int_{p_k^{\min}}^{p_k^{\max}} \frac{d}{d p_0} \frac{1}{\sqrt{2 \pi}} \exp\left( - \varphi_{i,k}^2(z_k, p_0) \right) d z_k \\
	&= - \frac{1}{N \prod_{j=1}^n h_j} \sum_{i=1}^N \sum_{k=1}^n \prod_{j=1, j\neq k}^n \int_{p_j^{\min}}^{p_j^{\max}} \frac{1}{\sqrt{2 \pi}} \exp\left( - \varphi_{i,j}^2(z_j, p_0) \right) d z_j \\
	&\hspace{.5cm}\cdot \int_{p_k^{\min}}^{p_k^{\max}} \frac{1}{\sqrt{\pi} h_k} \exp\left( - \varphi_{i,k}^2(z_k, p_0) \right) \varphi_{i,k}(z_k, p_0) \frac{d}{d p_0} p_k(b_i, p_0) d z_k.
\end{aligned} \end{equation*}
We define $\tau_{i,j} := \varphi_{i,j}(z_j, p_0)$ and since $p_k(b_i, p_0)$ is independent of $z_k$, it follows
\begin{equation*} \begin{aligned}
	\frac{d}{d p_0} \gamma_\alpha(p_0) = - \frac{1}{N \prod_{j=1}^n h_j} \sum_{i=1}^N \sum_{k=1}^n \prod_{j=1, j\neq k}^n \int\limits_{\varphi_{i,j}(p_j^{\min},p_0)}^{\varphi_{i,j}(p_j^{\max},p_0)} \frac{h_j}{\sqrt{\pi}} \exp\left( - \tau_{i,j}^2 \right) d \tau_{i,j} \\
	\cdot \frac{d}{d p_0} p_k(b_i, p_0) \int\limits_{\varphi_{i,k}(p_k^{\min},p_0)}^{\varphi_{i,k}(p_k^{\max},p_0)} \frac{\sqrt{2}}{\sqrt{\pi}} \exp\left( - \tau_{i,k}^2 \right) \tau_{i,k} d \tau_{i,k}.
\end{aligned} \end{equation*}
The second integral can be solved analytically and yields:
\begin{equation*}
	\left[ \frac{d}{dx} \left( -\frac{1}{\sqrt{2 \pi}} \exp(-x^2) \right) = \frac{\sqrt{2}}{\sqrt{\pi}} \exp(-x^2) x \right].
\end{equation*}
Hence, we have
\begin{equation*} \begin{aligned}
	\frac{d}{d p_0} \gamma_\alpha(p_0) = -\frac{1}{N} \frac{1}{2^n} \sum_{i=1}^N \sum_{k=1}^n \left[ \prod_{j=1,j \neq k}^n \left[ \erf\left( \varphi_{i,j}(p_j^{\max},p_0) \right) - \erf\left( \varphi_{i,j}(p_j^{\min},p_0) \right) \right] \right. \\
	\cdot \left. \frac{\sqrt{2}}{\sqrt{\pi} h_k} \frac{d}{d p_0} p_k(b_i, p_0) \left[ -\exp\left( -\varphi^2_{i,k}(p_k^{\max},p_0) \right) + \exp\left( -\varphi^2_{i,k}(p_k^{\min},p_0) \right) \right] \right].
\end{aligned} \end{equation*}
In the setting of the stationary gas networks it is true that
\begin{equation*}
	\frac{d}{d p_0} p_k(b^{\mathcal{S},i},p_0) = \frac{p_0}{p_k(b^{\mathcal{S},i},p_0)}.
\end{equation*}

\begin{corollary} \label{corollary:p0}
	Let $p^*_0 \in \mathbb{R}$ be a (local) optimal solution of (\ref{eq:optimizationInputPressureStationaryApprox}). Since the LICQ holds in $p^*_0$ (cf. \hyperref[remark:mfcq]{\textit{Remark \ref*{remark:mfcq}}}), then there exists a multiplier $\mu^* \geq 0$, s.t.
\begin{equation*} \begin{aligned}
	\nabla_{p_0} f(p^{\max}, p^*_0) + \mu^* \nabla \gamma_\alpha(p^*_0) &= 0, \\
	\gamma_\alpha(p^*_0) &\leq 0, \\
	\mu^* \gamma_\alpha(p^*_0) &= 0.
\end{aligned} \end{equation*}	
Thus the point $(p^*_0, \mu^*) \in \mathbb{R}^2$ is a Karush-Kuhn-Tucker point.
\end{corollary}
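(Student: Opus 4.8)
The plan is to read \hyperref[corollary:p0]{\textit{Corollary \ref*{corollary:p0}}} as a direct application of the classical first-order Karush-Kuhn-Tucker necessary conditions to the smooth nonlinear program \eqref{eq:optimizationInputPressureStationaryApprox}, exactly in parallel with the proof of \hyperref[corollary:pMax]{\textit{Corollary \ref*{corollary:pMax}}}. The feasible set is cut out by the single scalar inequality $\gamma_\alpha(p_0) \leq 0$ together with the sign restriction $p_0 \geq 0$; under a constraint qualification, optimality of $p^*_0$ forces the existence of a multiplier $\mu^* \geq 0$ satisfying stationarity, primal feasibility, and complementary slackness. Thus the real work reduces to checking the hypotheses of the KKT theorem and then quoting it, after which $(p^*_0,\mu^*)$ is a Karush-Kuhn-Tucker point by definition.

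First I would verify differentiability of the two ingredients. The constraint $\gamma_\alpha$ is continuously differentiable in $p_0$: its derivative has already been computed explicitly above by exchanging differentiation and integration via the \textit{dominated convergence theorem}, performing the substitution $\tau_{i,j} = \varphi_{i,j}(z_j,p_0)$, and using the chain-rule factor $\frac{d}{d p_0} p_k(b^{\mathcal{S},i},p_0) = p_0 / p_k(b^{\mathcal{S},i},p_0)$, which is well defined precisely because the stationary model guarantees strictly positive nodal pressures. I would further assume, as is implicit throughout this subsection, that the objective $f$ is $C^1$ in its second argument, so that $\nabla_{p_0} f(p^{\max},p^*_0)$ exists.

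Second, and this is the main obstacle, I would establish the constraint qualification at $p^*_0$. Since there is only one inequality constraint, LICQ reduces to the requirement that $\gamma_\alpha'(p^*_0) \neq 0$ whenever that constraint is active, which is the content invoked from \hyperref[remark:mfcq]{\textit{Remark \ref*{remark:mfcq}}}; one also needs the bound $p_0 \geq 0$ to be inactive at the optimum, i.e.\ $p^*_0 > 0$, which is natural for an inlet pressure and is what makes the stated stationarity condition free of an extra multiplier. This step is genuinely more delicate than in the $p^{\max}$ case: there the partial derivatives in \eqref{eq:partialDerivativePressureBoundStationary} were strictly negative for every admissible $p^{\max}$, so the gradient never vanished and LICQ held automatically, whereas here the scalar derivative contains the factor $\bigl[ -\exp(-\varphi_{i,k}^2(p_k^{\max},p_0)) + \exp(-\varphi_{i,k}^2(p_k^{\min},p_0)) \bigr]$, whose sign is indefinite. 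Consequently one cannot argue a uniform sign for $\gamma_\alpha'$, and the nonvanishing has to be read as a property satisfied at the particular active optimum $p^*_0$ rather than globally, which is exactly how the corollary is phrased (``Since the LICQ holds in $p^*_0$'').

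Finally, with $C^1$-regularity of $f$ and $\gamma_\alpha$ together with the constraint qualification in hand, I would simply invoke the standard KKT necessary optimality theorem for $\min f$ subject to $\gamma_\alpha \leq 0$ to obtain the multiplier $\mu^* \geq 0$ and the three listed relations, concluding that $(p^*_0,\mu^*) \in \mathbb{R}^2$ is a Karush-Kuhn-Tucker point. Beyond the constraint-qualification discussion above, every remaining step is routine bookkeeping and inherits directly from the explicit derivative formula for $\gamma_\alpha$ already derived.
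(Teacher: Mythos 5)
Your proposal is correct and takes essentially the same approach as the paper: the paper gives no separate proof of this corollary, treating it exactly as you do, namely as a direct application of the classical KKT necessary-conditions theorem to the scalar constraint $\gamma_\alpha(p_0)\leq 0$, with smoothness supplied by the derivative formula computed just before the statement and the constraint qualification imported from Remark~\ref{remark:mfcq}. Your two extra observations---that the sign-indefinite factor $\bigl[-\exp(-\varphi_{i,k}^2(p_k^{\max},p_0))+\exp(-\varphi_{i,k}^2(p_k^{\min},p_0))\bigr]$ blocks the uniform-sign argument that made LICQ automatic in the $p^{\max}$ case, so here LICQ is genuinely a hypothesis at $p_0^*$, and that one needs $p_0^*>0$ so the bound $p_0\geq 0$ contributes no extra multiplier---address points the paper itself glosses over, so your write-up is if anything more careful than the original.
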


If the objective function $f$ is strictly convex and the feasible set is convex, then all necessary conditions stated here are sufficient. In this case, \hyperref[corollary:pMax]{\textit{Corollary \ref*{corollary:pMax}}} and \hyperref[corollary:p0]{\textit{Corollary \ref*{corollary:p0}}} give a characterization of the (unique) optimal solution of the approximated problems (\ref{eq:optimizationPressureBoundsStationaryApprox}) and (\ref{eq:optimizationInputPressureStationaryApprox}).

\begin{remark}
The question whether the solutions of the approximated problems (\ref{eq:optimizationPressureBoundsStationaryApprox}) and (\ref{eq:optimizationInputPressureStationaryApprox}) converge to the solutions of (\ref{eq:optimizationPressureBoundsStationary}) and (\ref{eq:optimizationInputPressureStationary}), is out of scope of this work but all numerical results and tests hypothesize the convergence if the sample size goes to infinity.
\end{remark}

\subsection{Application to a realisitic gas network} \label{sec:GasLib11}

The \textit{GasLib}\footnote[1]{\url{http://gaslib.zib.de/}} promotes research on gas networks by providing realistic benchmark instances. We use the \textit{GasLib-11} as a meaningful example. A scheme of the GasLib-11 is shown in \hyperref[figure:GasLib-11]{\textit{Figure \ref*{figure:GasLib-11}}} and more information can be found at \url{http://gaslib.zib.de/testData.html}. \\

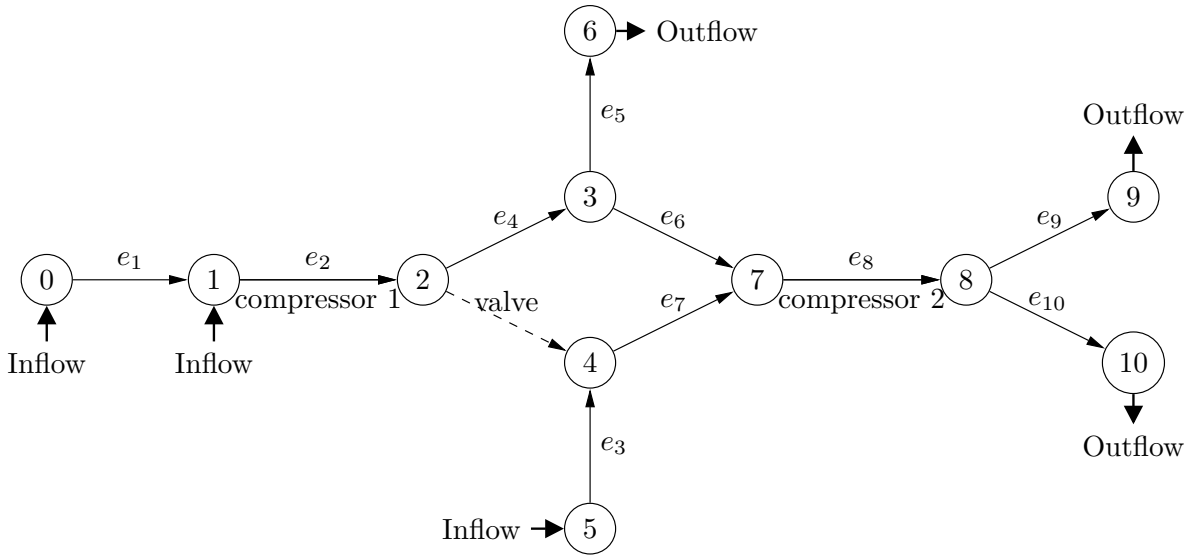
\begin{figure}[htbp]
	\centering
	\resizebox{\textwidth}{!}{
		\begin{tikzpicture}
			% Nodes
			\node [minimum size=0.5cm] (A) at (0,0) [circle, draw] {$0$};
			\node [minimum size=0.5cm] (B) at (2,0) [circle, draw] {$1$};
			\node [minimum size=0.5cm] (C) at (4.5,0) [circle, draw] {$2$};
			\node [minimum size=0.5cm] (D) at (6.5,1) [circle, draw] {$3$};
			\node [minimum size=0.5cm] (E) at (6.5,-1) [circle, draw] {$4$};
			\node [minimum size=0.5cm] (F) at (6.5,-3) [circle, draw] {$5$};
			\node [minimum size=0.5cm] (G) at (6.5,3) [circle, draw] {$6$};
			\node [minimum size=0.5cm] (H) at (8.5,0) [circle, draw] {$7$};
			\node [minimum size=0.5cm] (I) at (11,0) [circle, draw] {$8$};
			\node [minimum size=0.5cm] (J) at (13,1) [circle, draw] {$9$};
			\node [minimum size=0.5cm] (K) at (13,-1) [circle, draw] {$10$};
		
			% Edges
			\draw[->, >={Triangle[length=0pt 2*8,width=0pt 8]}] (A) to node[above] {$e_1$} (B);
			\draw[->, >={Triangle[length=0pt 2*8,width=0pt 8]}] (B) to node[above] {$e_2$} (C);
			\draw[->, >={Triangle[length=0pt 2*8,width=0pt 8]}] (F) to node[right] {$e_3$} (E);
			\draw[->, >={Triangle[length=0pt 2*8,width=0pt 8]}] (C) to node[above] {$e_4$} (D);
			\draw[->, >={Triangle[length=0pt 2*8,width=0pt 8]}] (D) to node[right] {$e_5$} (G);
			\draw[->, >={Triangle[length=0pt 2*8,width=0pt 8]}] (D) to node[above] {$e_6$} (H);
			\draw[->, >={Triangle[length=0pt 2*8,width=0pt 8]}] (E) to node[above] {$e_7$} (H);
			\draw[->, >={Triangle[length=0pt 2*8,width=0pt 8]}] (H) to node[above] {$e_8$} (I);
			\draw[->, >={Triangle[length=0pt 2*8,width=0pt 8]}] (I) to node[above] {$e_9$} (J);
			\draw[->, >={Triangle[length=0pt 2*8,width=0pt 8]}] (I) to node[above] {$e_{10}$} (K);
			% Compressor Stations and Valve
			\draw[->, >={Triangle[length=0pt 2*8,width=0pt 8]}] (B) to node[below] {compressor 1} (C);
			\draw[->, >={Triangle[length=0pt 2*8,width=0pt 8]}] (H) to node[below] {compressor 2} (I);
			\draw[->, >={Triangle[length=0pt 2*8,width=0pt 8]}, dashed] (C) to node[above] {valve} (E);
		
			% Inflows and Outflows
			\node [minimum size=0.5cm] (L) at (0,-1) {Inflow};
			\node [minimum size=0.5cm] (M) at (2,-1) {Inflow};
			\node [minimum size=0.5cm] (N) at (5.2,-3) {Inflow};
			\draw[->, >={Triangle[length=0pt 1*8,width=0pt 8]}, thick] (L) to (A);
			\draw[->, >={Triangle[length=0pt 1*8,width=0pt 8]}, thick] (M) to (B);
			\draw[->, >={Triangle[length=0pt 1*8,width=0pt 8]}, thick] (N) to (F);
			\node [minimum size=0.5cm] (O) at (7.9,3) {Outflow};
			\node [minimum size=0.5cm] (P) at (13,2) {Outflow};
			\node [minimum size=0.5cm] (Q) at (13,-2) {Outflow};
			\draw[->, >={Triangle[length=0pt 1*8,width=0pt 8]}, thick] (G) to (O);
			\draw[->, >={Triangle[length=0pt 1*8,width=0pt 8]}, thick] (J) to (P);
			\draw[->, >={Triangle[length=0pt 1*8,width=0pt 8]}, thick] (K) to (Q);
		\end{tikzpicture}
	}
	\caption{A scheme of the GasLib-11}
	\label{figure:GasLib-11}
\end{figure}

The GasLib-11 consists in $11$ nodes and $11$ edges. Two of the edges represent compressor stations and one edge represents a valve. Compressor stations counteract the pressure loss caused by friction in the pipes. Here the compressor stations are modeled as frictionless pipes, that satisfy the equation
\begin{equation*}
	\frac{(p_{\text{in}})^2}{(p_{\text{out}})^2} = u,
\end{equation*}
as it is done in \cite{GugatSchuster}. This model for compressor stations is also suggested in \cite{EvaluatingGasNetworkCapacities}, where one gets an excellent overview about the details on how to model a compressor station. For our system we assume that the compressor at edge $e_2$ is switched off, i.e., $u_{e_2} = 1$ (so this edge can be modeled as frictionless pipe) and that the compressor station at edge $e_8$ increases the pressure by $20\%$, i.e., $u_{e_8} = 1.2$. The valve is also modeled as a frictionless pipe in which gas can be transported if the valve is opened and which cannot be used for gas transportation if the valve is closed. We assume that the valve is closed, so this edge vanishes in our implementation. For the remaining edges ($e_1, e_3, e_4, e_5, e_6, e_7, e_9, e_{10}$) we assume $\phi_{e_i} = 1$. \\
Further gas enters the network at the nodes $v_0$, $v_1$, $v_5$ and is transported through the network to the nodes $v_6$, $v_9$ and $v_{10}$. The values for the inlet pressure $p_0 = [p_{v_0}, p_{v_1}, p_{v_5}]$, the lower pressure bound $p^{\text{min}} = [p^{\text{min}}_{v_6}, p^{\text{min}}_{v_9}, p^{\text{min}}_{v_{10}}]$ and the probability distribution at the exit nodes $\mu = [\mu_{v_6}, \mu_{v_9}, \mu_{v_{10}}]$ and $\Sigma$ with $\text{diag}(\Sigma) = [\sigma^2_{v_6}, \sigma^2_{v_9}, \sigma^2_{v_{10}}]$ are given in \hyperref[table:GasLib-11]{\textit{Table \ref*{table:GasLib-11}}}.\\
\begin{table}[h!]
	\centering
	\begin{tabular}{| c | c | c | c |}
			\hline
			$p_0$ 		& $p^{\min}$	& $\mu$		& $\Sigma$ \\
			\hline
			$\begin{pmatrix} 60 \\ 58 \\ 60 \end{pmatrix} $	& $\begin{pmatrix} 40 \\ 40 \\ 40 \end{pmatrix}$	& $\begin{pmatrix} 20 \\ 15 \\ 18 \end{pmatrix}$	& $\begin{pmatrix} 2 & 0 & 0 \\ 0 & 2 & 0 \\ 0 & 0 & 2 \end{pmatrix}$  \\	
			\hline
	\end{tabular}
	\caption{Values for the GasLib-11}
	\label{table:GasLib-11}
\end{table}\\[0.1mm]
Consider the linear function
\begin{equation*}
	f : \mathbb{R}^3 \rightarrow \mathbb{R},\quad f : p^{\text{max}} \mapsto c^\top p^{\text{max}},
\end{equation*}
with $c = \mathbb{1}_3$. We first solve the deterministic problem
\begin{equation} \label{eq:optGasLib11Deter} \begin{aligned}
	\min_{p^{\text{max}} \in \mathcal{P}_0} \quad &f(p^{\text{max}}) \\
	\text{s.t.} \quad &b \in M(p^{\text{max}}),
\end{aligned} \end{equation}
where the load vector $b$ is given by the mean value $\mu$. We use default setting of the MATLAB$\textsuperscript{\textregistered}$-routine \textit{fmincon} to solve (\ref{eq:optGasLib11Deter}), which is an interior-point algorithm. It returns
\begin{equation*}
	p^{\text{max}}_{\text{det}} = \begin{bmatrix} 46.10 \\ 52.04 \\ 51.08
	\end{bmatrix},
\end{equation*}
as optimal deterministic solution, i.e., as the lowest upper pressure bound for the nodes $v_6$, $v_9$ and $v_{10}$. Now we consider the uncertain outflow at the nodes $v_6$, $v_9$ and $v_{10}$. We compute the probability for a random load vector to be feasible with respect to the optimal deterministic pressure bounds by using (\ref{eq:probabilityKDEStationaryErf}). The probability $\mathbb{P}(b \in M(p^{\max}_{\text{det}}))$ for $8$ tests (each with $1 \cdot 10^5$ samples) is shown in \hyperref[table:GasLib-11probability]{\textit{Table \ref*{table:GasLib-11probability}}}.
\begin{table} [h]
	\centering
	\begin{tabular}{| c | c | c | c | c | c | c | c | c |}
		\hline
			& Test 1	& Test 2	& Test 3	& Test 4	& Test 5	& Test 6	& Test 7	& Test 8	\\
		\hline
		MC	& $36.02\%$	& $35.66\%$	& $35.91\%$	& $35.86\%$	& $35.34\%$	& $35.48\%$	& $35.98\%$	& $35.90\%$	\\
		\hline	
		KDE & $35.72\%$ & $35.41\%$ & $35.48\%$ & $35.39\%$ & $34.92\%$ & $35.08\%$ & $35.75\%$ & $35.47\%$ \\
		\hline
	\end{tabular}
	\caption{Probability $\mathbb{P}(b \in M(p^{\max}_{\text{deter}}))$ for the optimal deterministic upper pressure bounds}
	\label{table:GasLib-11probability}
\end{table}
The probabilities for the deterministic optimal pressure bounds are unsatisfactory. The mean MC probability is $35.77\%$ and the mean KDE probability is $35.40\%$. For a confidence level of $95\%$ the confidence interval for the MC probability is $[35.56\%, 35.98\%]$ and the confidence interval for the KDE probability is $[35.16\%, 35.64\%]$. So if the boundary data (i.e., the gas demand) is uncertain, the optimal deterministic pressure bounds are unserviceable in the sense that these bounds do not provide a good operating gas network for uncertain gas demand. \\

Next we consider the probabilistic constrained optimization problem (\ref{eq:optimizationPressureBoundsStationaryApprox}). We set
\begin{equation*}
	\alpha := 0.75.
\end{equation*}
For arbitrary starting points the MATLAB$\textsuperscript{\textregistered}$-routine \textit{fmincon} sometimes struggles with finding a solution of (\ref{eq:optimizationPressureBoundsStationaryApprox}) but the optimal deterministic solution appears to be a good choice for the starting point of the routine. The results of $8$ Tests with $1 \cdot 10^5$ sampling points, i.e., the optimal upper pressure bounds $p^{\text{max}}$ at the nodes $v_6$, $v_9$ and $v_{10}$, are shown in \hyperref[table:GasLib-11Results]{\textit{Table \ref*{table:GasLib-11Results}}}. In $8$ more Tests we solve (\ref{eq:optimizationPressureBoundsStationaryApprox}) by using \hyperref[corollary:pMax]{\textit{Corollary \ref*{corollary:pMax}}}. The points that satisfy the necessary optimality conditions are always good candidates for the optimal solution. To be more precise on that we solve the following optimization problem using again \textit{fmincon}: \\
\begin{equation*} \begin{aligned}
	\min_{p^{\text{max}}, \mu} \quad f(p^{\text{max}}) \hspace{4.25cm} & \\
	\text{s.t.} \quad \nabla_{p^{\max}} f(p^{\max}, p_0) + \mu \nabla g_\alpha(p^{\max}) &= 0, \\
	g_\alpha(p^{\max}) &\leq 0, \\
	\mu g_\alpha(p^{\max}) &= 0, \\
	 \mu &\geq 0.
\end{aligned} \end{equation*}
Here, we get values that are almost equal to the optimal solution stated in \hyperref[table:GasLib-11Results]{\textit{Table \ref*{table:GasLib-11Results}}}, they vary in a range of $1 \cdot 10^{-6}$. From this fact one could expect that the necessary optimality conditions stated in \hyperref[corollary:pMax]{\textit{Corollary \ref*{corollary:pMax}}} are sufficient but we do not analyze this here. \\
\begin{table} [h]
	\centering
	\begin{tabular}{| c | c | c | c | c | c | c | c |}
		\hline
		Test 1	& Test 2	& Test 3	& Test 4	& Test 5	& Test 6	& Test 7	& Test 8	\\
		\hline
		$\begin{bmatrix} 47.51 \\ 53.33 \\ 52.44 \end{bmatrix}$	& $\begin{bmatrix} 47.51 \\ 53.34 \\ 52.45 \end{bmatrix}$	& $\begin{bmatrix} 47.52 \\ 53.33 \\ 52.46 \end{bmatrix}$	& $\begin{bmatrix} 47.52 \\ 53.34 \\ 52.46 \end{bmatrix}$	& $\begin{bmatrix} 47.51 \\ 53.35 \\ 52.46 \end{bmatrix}$	& $\begin{bmatrix} 47.51 \\ 53.35 \\ 52.45 \end{bmatrix}$	& $\begin{bmatrix} 47.53 \\ 53.33 \\ 52.44 \end{bmatrix}$	& $\begin{bmatrix} 47.52 \\ 53.33 \\ 52.46 \end{bmatrix}$	\\
		\hline		
	\end{tabular}
	\caption{Stochastic optimal upper pressure bounds $p^{\max}_{\text{stoch}}$}
	\label{table:GasLib-11Results}
\end{table}
One can see, that all results are almost equal. The optimal upper pressure bounds of the stochastic optimization problem (\ref{eq:optimizationPressureBoundsStationaryApprox}) are slightly larger than the optimal upper pressure bounds of the deterministic optimization problem (\ref{eq:optGasLib11Deter}) but the probability for a random load vector to be feasible is $75\%$, as it is required in the probabilistic constraint. The computation time for a single test was about $20$ minutes, where the optimization time was much less than a second. The $20$ minutes were almost only needed to solve the GasLib-11 $1 \cdot 10^5$ times. The solution of the necessary optimality conditions needed a little bit more time than the direct solution using \textit{fmincon} and (\ref{eq:probabilityKDEStationaryErf}), but solving the necessary optimality conditions leads to a solution more often even if the starting point of \textit{fmincon} is badly chosen.

\section{Dynamic flow networks} \label{sec:dynamic}

In this section, we extend the in \hyperref[sec:stationary]{\textit{Section \ref*{sec:stationary}}} introduced methods to dynamic systems. We first discuss probabilistic constraints in a dynamic setting and time dependent random boundary data. Then we consider a model, which we can solve analytically to use the idea of the SRD for dynamic systems and compare it with the idea of the KDE. Last we also formulate necessary optimality conditions for optimization problems with probabilistic constraints in a dynamic setting. \\

\subsection{Time dependent probabilistic constraints and random boundary data}

In \hyperref[sec:stationary]{\textit{Section \ref*{sec:stationary}}}, we computed the probability for a random vector to be feasible. So if we fix a point in time $t^* \in [0,T]$, we can use a similar procedure. But if we do not fix a point in time, we need an extension to the probabilistic constraint how it has to be understood for a time period. For a time dependent uncertain boundary function $b(t)$ and a (time dependent) feasible set $M(t)$, a possible formulation (the one, that we will use later) for the probabilistic constraint is
\begin{equation} \label{eq:timeDependentProbabilisticConstraint}
	\mathbb{P}(b \in M(t)\ \forall t \in [0,T]) \geq \alpha.
\end{equation}
That means, we want to guarantee, that a percentage $\alpha$ of all possible random boundary functions (in an appropriate probability space ($\Omega, \mathcal{A}, \mathcal{P})$) is feasible in every point in time $t \in [0,T]$. This is a very strong condition. In fact (\ref{eq:timeDependentProbabilisticConstraint}) is a so-called probust constraint, which means it is a mix between a probabilistic constraint and a robust constraint. This class of constraints has been developed recently and is currently of big interest in research (see e.g. \cite{VanAckooijHenrionPerezAros, AdelhuetteEtAl}). Another possibility is
\begin{equation*}
	\mathbb{P}(b \in M(t)) \geq \alpha \quad \forall t \in [0,T],
\end{equation*}
which means, that a random boundary function must be feasible with a percentage of $\alpha$ in every time point $t \in [0,T]$. For our applications, this might not make sense, because we want to guarantee that problems for a gas consumer only occur in worst case scenarios. This probabilistic constraint only states, that even in these worst case scenarios, the problems for a consumer stay small, but these small problems can occur in every point in time. Probabilistic constraints of this type have been discussed in \cite{VanAckooijFrangioniOliveira}. A third possibility for the time dependent probabilistic constraint is an ergodic formulation:
\begin{equation*}
	\frac{1}{T} \int_0^T \mathbb{P}(b \in M(t))\ dt \geq \alpha.
\end{equation*}
That means the ergodic probability during the time period $[0,T]$ must be large enough. This formulation might make sense in other applications, but not for the flow problems which are considered here (with the same argument as before). Thus we use the formulation (\ref{eq:timeDependentProbabilisticConstraint}) for time dependent probabilistic constraints. \\

Next we discuss the uncertain boundary data. For a random boundary data, we use a representation as Fourier series as it is done in \cite{FarshbafShakerGugatHeitschHenrion}. So for a deterministic boundary function $b_D: [0,T] \rightarrow \mathbb{R}$ with $b_D(0) = 0$ and for $m = 0, 1, 2, \cdots$, we define the orthonormal series
\begin{equation} \label{eq:FourierBasis}
	\psi_m(t) := \frac{\sqrt{2}}{\sqrt{T}} \sin \left( \left( \frac{\pi}{2} + m \pi \right) \frac{t}{T} \right),
\end{equation}
and the coefficients
\begin{equation} \label{eq:FourierCoefficients}
	a^0_m := \int_0^T b_D(t) \psi_m(t) dt.
\end{equation}
Then we can write the boundary function $b_D(t)$ in a series representation
\begin{equation} \label{eq:BoundaryAsFourier}
	b_D(t) = \sum_{m=0}^\infty a^0_m \psi_m(t).
\end{equation}
Now for $m \in \mathbb{N}_0$, we consider the Gaussian distributed random variables $a_m \sim \mathcal N (1, \sigma^2)$ for a mean value $1$ and a standard deviation $\sigma \in \mathbb{R}_+$ on an appropriate probability space $(\Omega,\mathcal{A},\mathbb{P})$. Then we consider the random boundary data
\begin{equation} \label{eq:FourierBoundary}
	b(t,\omega) = \sum_{m=0}^\infty a_m(\omega) a^0_m \psi_m(t).
\end{equation}
Since the random variables $a_m$ are all independent and identically distributed, we can use the fact that for $b_D \in L^2(0,T)$, we have also $b \in L^2(0,T)$ $\mathbb{P}$-almost surely. In \cite{MarcusPisier, Hill, FarshbafShakerGugatHeitschHenrion} the authors state that this approach even guarantees better regularity and it also holds for a larger class of random variables.

\begin{remark} \label{remark:FourierSeries}
For the numerical tests, we truncate the Fourier series after $N_F \in \mathbb{N}$ terms. Thus we use
\begin{equation*}
	b_D^{N_F}(t) = \sum_{m=0}^{N_F} a^0_m \psi_m(t)
\end{equation*}
instead of (\ref{eq:BoundaryAsFourier}) for the implementation of $b_D$. Because it holds
\begin{equation*}
	\lim_{N_F \rightarrow \infty} b_D^{N_F} = b_D,
\end{equation*}
this truncated Fourier series is a sufficient good expression for $b_D(t)$ for $N_F$ large enough. The question how to choose $N_F$ strongly depends on the data $b_D$ and on the desired accuracy of the Fourier series. In general one has to guarantee, that the truncation error is small. One criteria for finding a sufficient large number $N_F$ is to state a bound for the $L^2$-truncation error. For $\vartheta \in (0,1)$ we require that $N_F$ is chosen large enough, s.t.
\begin{equation*}
	\left\Vert b_D(t) - b_D^{N_F}(t) \right\Vert_{L^2}^2 \leq \vartheta \left\Vert b_D(t) \right\Vert_{L^2}^2.
\end{equation*}
Due to the convergence of the Fourier series it is always possible to find $N_F$ large enough, s.t. the $L^2$-error bound is satisfied for all $\vartheta \in (0,1)$. Another criteria for an sufficient large number $N_F$ is a bound for the $L^\infty$-truncation error. For $\vartheta \in (0,1)$ we require that $N_F$ is chosen large enough s.t.
\begin{equation*}
	\left\Vert b_D(t) - b_D^{N_F}(t) \right\Vert_{L^\infty} \leq \vartheta \left[ \sup_{\tau \in [0,T]} b_D(\tau) - \inf_{\tau \in [0,T]} b_D(\tau) \right].
\end{equation*}
The Gibbs phenomenon might cause problems regarding the $L^\infty$-error if $b_D$ contains discontinuities (see e.g. \cite{Thompson}), so in this case the $L^2$-error is the better choice. For continuous functions $b_D$ both estimates can be used to find a sufficient large number $N_F$. Usually $\vartheta$ is chosen small, even close to zero, i.e., $\vartheta = 1\%$ or $\vartheta = 0.1\%$, but this choice depends on the operator. Similarly we use
\begin{equation*}
	b^{N_F}(t,\omega) := \sum_{m=0}^{N_F} a_m(\omega) a^0_m \psi_m(t),
\end{equation*}
with $\mathcal{N}(1, \sigma^2)$-distributed random variables $a_0, \cdots, a_{N_F}$ instead of (\ref{eq:FourierBoundary}) as random boundary data for the implementation.
\end{remark}
This representation of a random boundary function as Fourier series requires $b_D(0) = 0$. If this is not given, i.e., if $b_D(0) \neq 0$, one can shift $b_D$ by $b_D(0)$, get the representation as Fourier series and shift this Fourier representation back by $b_D(0)$, as we do later in \hyperref[example3]{\textit{Example 3}}.

\subsection{Deterministic loads for a scalar PDE}

For $(t,x) \in [0,T] \times [0,L]$ and constants $d < 0$, $m \leq 0$, we consider the deterministic scalar linear PDE with initial condition and boundary condition
\begin{equation} \label{eq:linearScalarPDE} \left\{ \begin{aligned}
	&r_t(t,x) + d r_x(t,x) = m r(t,x), \\
	&r(0,x) = r_0(x), \\
	&r(t,L) = b(t).
\end{aligned} \right. \end{equation}
Here, $r$ is the concentration of the contamination. The term $d r_x$ describes the transport of the contamination according to the water flow and the term $mr$ describes the decay of the contamination.
This equation models the flow of contamination in water along a pipe or in a network (see \cite{GugatWater, FuegenschuhGoettlichHerty}). Assume $C^0$-compatibility between the initial and the boundary condition, which is $r_0(L) = b(0)$. We will specify the boundary condition later. We state $b(t) \geq 0$, if the water gets polluted and $b(t) < 0$ if the water gets cleaned.

For initial data $r_0 \in L^2(0,L)$ and boundary data $b \in L^2(0,T)$, a solution of (\ref{eq:linearScalarPDE}) is in $C([0,T], L^2(0,L))$ and it is analytically given by
\begin{equation*}
	r(t,x) = \begin{cases} \exp(mt)\ r_0(x - dt) & \text{if } x \leq L + dt, \\ \exp \left(m \frac{x - L}{d} \right) b(t - \frac{x - L}{d}) & \text{if } x > L + dt. \end{cases}
\end{equation*}

Now, we consider a linear graph $G = (\mathcal{V}, \mathcal{E})$ with vertex set $\mathcal{V} := \{v_0, \cdots, v_n \}$ and the set of edges $\mathcal{E} = \{e_1, \cdots, e_n\} \subseteq \mathcal{V} \times \mathcal{V}$. Every edge $e_i \in \mathcal{E}$ has a positive length $L_i$.
Linear means here, that every node has at most one outgoing edge (see \hyperref[figure:linearGraph]{\textit{Figure \ref*{figure:linearGraph}}}). For a formal definition see \cite{GugatSchultzSchuster}.

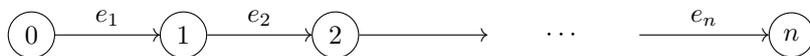
\begin{figure}[htbp]
	\centering
	\begin{tikzpicture}
		\node [minimum size=0.5cm] (A) at (0,0) [circle, draw] {$0$};
		\node [minimum size=0.5cm] (B) at (2,0) [circle, draw] {$1$};
		\node [minimum size=0.5cm] (C) at (4,0) [circle, draw] {$2$};
		\node [minimum size=0.5cm] (D) at (7,0) {$\cdots$};
		\node [minimum size=0.5cm] (E) at (10,0) [circle, draw] {$n$};

		\draw[->] (A) to node[above] {$e_1$} (B);
		\draw[->] (B) to node[above] {$e_2$} (C);
		\draw[->] (C) to (6,0);
		\draw[->] (8,0) to node[above] {$e_n$} (E);
	\end{tikzpicture}
	\caption{Linear graph with $n+1$ nodes}
	\label{figure:linearGraph}
\end{figure}

Equation (\ref{eq:linearScalarPDE}) holds on every edge. We assume conservation of the flow at the nodes, i.e.
\begin{equation*}
	r_i(t,L_i) = r_{i+1}(t,0) + b_i(t) \quad \forall i = 1, \cdots, n-1 \quad \forall t \in [0,T],
\end{equation*}
where $r_i$ denotes the contamination concentration on edge $e_i$ and $b_i$ denotes the boundary data at node $v_i$.

For constants $d_k < 0$, $m_k \leq 0$, the full model can be written as follows (with $(t,x) \in [0,T] \times [0,L_k]$ on the $k$-th edge and $k = 1, \cdots, n$):

\begin{equation} \label{eq:lineaScalarModelOnLinearGraphWithOutflows}
	\left\{ \quad \begin{aligned}
		&r_k(0,x) = r_{k,0}(x), \\
		&(r_k)_t(t,x) + d_k (r_k)_x(t,x) = m_k r_k(t,x), \\
		&r_k(t,L_k) = \begin{cases} b_n(t) &\text{if } k = n, \\ r_{k+1}(t,0) + b_k(t) &\text{else}. \end{cases}
\end{aligned} \right. \end{equation}

The model can be interpreted as follows: The graph represents a water network, where the water is contaminated at the nodes $v_i$ ($i = 1, \cdots n$). This contamination is distributed in the graph in a negative way (due to $d_k < 0$). We want to know the contamination rate at node $v_0$. Later, we assume the contamination rate at the nodes to be Gaussian distributed. Then for a time $t^* \in [0,T]$, we want to compute the probability for the contamination rate at node $v_0$ to fulfill box constraints using both, the SRD and the KDE. In both cases, we also consider the general time dependent chance constraints discussed before. The next theorem states an analytical solution for the model (\ref{eq:lineaScalarModelOnLinearGraphWithOutflows}).

\begin{satz} \label{theorem:scalarModelOnLinearGraph}
Let initial states $r_{k,0} \in H^1(0,L_k)$ and boundary conditions $b_k \in H^1(0,T)$ for $k = 1, \cdots, n$ be given. Then the solution of the $k$-th edge of (\ref{eq:lineaScalarModelOnLinearGraphWithOutflows}) is in $C^1([0,T], H^1(0,L_k))$ and it is analytically given for $x \geq d_k t + d_k \sum_{j=k}^n \frac{L_j}{d_j}$ by
\begin{equation*} \begin{aligned}
	r_k(t,x) &= \sum_{i=k}^n \exp \left( m_k \frac{x}{d_k} - \sum_{j=k}^i m_j \frac{L_j}{d_j} \right) b_i \left( t - \frac{x}{d_k} + \sum_{j=k}^i \frac{L_j}{d_j} \right).
\end{aligned} \end{equation*}
For the case $x < d_k t + d_k \sum_{j=k}^n \frac{L_j}{d_j}$ the solution is given by
\begin{equation*} \begin{aligned}
	r_k(t,x) = & \sum_{i=k}^{\ell-1} \exp \left( m_k \frac{x}{d_k} - \sum_{j=k}^i m_j \frac{L_j}{d_j} \right) b_i \left( t - \frac{x}{d_k} + \sum_{j=k}^i \frac{L_j}{d_j} \right) \\
	&+ \exp \left( m_\ell t - (m_\ell - m_k) \frac{x}{d_k} + \sum_{j=k}^{\ell-1} (m_\ell - m_j) \frac{L_j}{d_j} \right) r_{\ell,0} \left( -d_\ell t + d_\ell \frac{x}{d_k} - d_\ell \sum_{j=k}^{\ell-1} \frac{L_j}{d_j} \right)
\end{aligned} \end{equation*}
for $d_k t + d_k \sum_{j=k}^{\ell-1} \frac{L_j}{d_j} \leq x < d_k t + d_k \sum_{j=k}^{\ell} \frac{L_j}{d_j}$ and $\ell \in \{k, \cdots, n\}$ (with $d_k < 0$ for $k = 1, \cdots, n$).
\end{satz}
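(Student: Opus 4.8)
The plan is to solve the system \eqref{eq:lineaScalarModelOnLinearGraphWithOutflows} edge by edge via the method of characteristics, exploiting the single-edge representation already recorded for \eqref{eq:linearScalarPDE} and propagating the boundary coupling $r_k(t,L_k) = r_{k+1}(t,0) + b_k(t)$ from the source node $v_n$ down to $v_0$. Concretely, I would argue by downward induction on $k$, from $k=n$ to $k=1$. For the base case $k=n$ the boundary datum is simply $b_n$, so the claimed formula must reduce to the single-edge solution of \eqref{eq:linearScalarPDE} with $L=L_n$, $d=d_n$, $m=m_n$; one checks directly that the sum collapses to its single term $i=n$ and that the conditions $x \ge d_n t + L_n$ and $d_n t \le x < d_n t + L_n$ coincide with $x > L + dt$ and $x \le L + dt$, reproducing the boundary branch $\exp(m_n (x-L_n)/d_n)\, b_n(t - (x-L_n)/d_n)$ and the initial branch $\exp(m_n t)\, r_{n,0}(x - d_n t)$ respectively.

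For the inductive step I would assume the representation on edge $k+1$, substitute $x=0$ to obtain $r_{k+1}(t,0)$, and use this together with $b_k$ as the boundary datum $B_k(t) := r_{k+1}(t,0) + b_k(t)$ of edge $k$. Feeding $B_k$ into the single-edge boundary branch on edge $k$ and collecting the accumulated decay factors and time shifts is then bookkeeping: each full traversal of an edge $e_j$ contributes a factor $\exp(-m_j L_j/d_j)$ and shifts the time argument by $L_j/d_j$, while the partial traversal on edge $k$ from $L_k$ to $x$ contributes $\exp(m_k (x-L_k)/d_k)$ and the shift $-(x-L_k)/d_k$. Telescoping these products turns the factor multiplying $b_i$ into $\exp(m_k \tfrac{x}{d_k} - \sum_{j=k}^i m_j \tfrac{L_j}{d_j})$ with time argument $t - \tfrac{x}{d_k} + \sum_{j=k}^i \tfrac{L_j}{d_j}$, matching the first displayed formula. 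Equivalently, and this is how I would actually organise the verification, I would trace the backward characteristic through $(t,x)$: on edge $k$ it runs rightward to $v_k$ at elapsed time $\tfrac{x}{d_k} - \tfrac{L_k}{d_k}$, then crosses edges $k+1, k+2, \dots$, reaching node $v_i$ at elapsed time $\tfrac{x}{d_k} - \sum_{j=k}^i \tfrac{L_j}{d_j}$, and $r_k(t,x)$ is read off as the decayed boundary contributions gathered along the way.

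The main obstacle is the case analysis that fixes the interval structure. Whether the backward characteristic terminates on the boundary at $v_n$ or on the initial line $t=0$ depends on comparing $t$ with the elapsed times $\tfrac{x}{d_k} - \sum_{j=k}^i \tfrac{L_j}{d_j}$. The pure-boundary regime requires the characteristic to reach $v_n$ at a nonnegative time, i.e. $\tfrac{x}{d_k} - \sum_{j=k}^n \tfrac{L_j}{d_j} \le t$, which upon multiplying by $d_k < 0$ (reversing the inequality) becomes exactly $x \ge d_k t + d_k \sum_{j=k}^n \tfrac{L_j}{d_j}$. If instead the characteristic meets $t=0$ while on edge $e_\ell$, then the elapsed times to the endpoints $v_{\ell-1}$ and $v_\ell$ must bracket $t$, which translates into the stated interval $d_k t + d_k \sum_{j=k}^{\ell-1} \tfrac{L_j}{d_j} \le x < d_k t + d_k \sum_{j=k}^\ell \tfrac{L_j}{d_j}$; the position at which it meets $t=0$ is $-d_\ell t + d_\ell \tfrac{x}{d_k} - d_\ell \sum_{j=k}^{\ell-1} \tfrac{L_j}{d_j}$, which is precisely the argument of $r_{\ell,0}$. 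The care needed is to ensure the thresholds coming from the inductive hypothesis on edge $k+1$ splice consistently with edge $k$'s own boundary/initial dichotomy, so that the branches cover $[0,L_k]$ without gaps or overlaps; assembling the decay factors for the initial-data branch (the partial time $m_\ell(t - \tfrac{x}{d_k} + \sum_{j=k}^{\ell-1}\tfrac{L_j}{d_j})$ spent on edge $\ell$ plus the downstream traversals) then yields the exponent $m_\ell t - (m_\ell - m_k)\tfrac{x}{d_k} + \sum_{j=k}^{\ell-1}(m_\ell - m_j)\tfrac{L_j}{d_j}$.

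Finally, I would note that the asserted regularity $r_k \in C^1([0,T], H^1(0,L_k))$ is inherited from the explicit representation: each branch is a finite sum of products of exponentials with translates and rescalings of the $H^1$-data $b_i$ and $r_{\ell,0}$, and the $C^0$-compatibility assumed between initial and boundary conditions guarantees that the branches match across the characteristic interfaces $x = d_k t + d_k \sum_{j=k}^\ell L_j/d_j$, so that no regularity is lost at the gluing, exactly as in the single-edge case.
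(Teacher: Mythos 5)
Your proposal is correct, but it takes a genuinely different route from the paper's own proof. The paper never derives the formula: it \emph{verifies} it. After introducing the shorthand functions $\alpha_{k,i}(x)$, $\beta_{k,i}(t,x)$, $\gamma_{k,\ell}(t,x)$, $\delta_{k,\ell}(t,x)$, it checks in four separate steps that the displayed expression satisfies the PDE (by differentiating both branches and cancelling), the initial conditions (setting $t=0$, $\ell=k$), the boundary condition at $k=n$, $x=L_n$, and the coupling conditions (using the identities $\alpha_{k,i}(L_k)=\alpha_{k+1,i}(0)$, $\beta_{k,i}(t,L_k)=\beta_{k+1,i}(t,0)$ and their $\gamma,\delta$ analogues). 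You instead construct the solution by downward induction on $k$, feeding the coupled datum $B_k(t)=r_{k+1}(t,0)+b_k(t)$ into the single-edge representation and telescoping decay factors and time shifts, with the case analysis dictated by where the backward characteristic through $(t,x)$ terminates; your bookkeeping (the telescoped exponent, the time arguments, and the translation of ``the characteristic meets $t=0$ on edge $e_\ell$'' into the stated interval for $x$) checks out. Each approach buys something: the paper's verification is mechanically simple and self-contained, but it presupposes the formula and shows only that it \emph{is} a solution; your characteristics argument explains where the formula and the partition into regimes indexed by $\ell$ come from, and, since the value at every point is forced by tracing the characteristic back to a datum, it also yields uniqueness, which the verification route leaves tacit. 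Two points you should make explicit if you write this up: (i) the inductive step applies the single-edge solution formula with boundary datum $B_k$, so the induction hypothesis must include enough regularity of $r_{k+1}(\cdot,0)$ (and hence node-wise $C^0$-compatibility conditions $r_{k,0}(L_k)=r_{k+1,0}(0)+b_k(0)$, not only the single-edge one) for that formula to be applicable; (ii) the interface inequalities are assigned to different branches in the theorem ($x\geq\cdots$ for the boundary branch) than in the single-edge formula ($x\leq L+dt$ for the initial branch), which is harmless precisely because of that compatibility, but calling the conditions ``coinciding'' glosses over it.
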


\begin{remark} \label{remark:informationTransport}
We set $t^* := \sum_{j=1}^n \frac{L_j}{\vert d_j \vert}$. For points in time $t \leq t^*$ the solution can depend explicitly on the initial condition. If we assume that $\vert d_i \vert$ are absolute velocities and $L_i$ are lengths, the information from the right boundary needs $\frac{L_n}{ \vert d_n \vert}$ seconds to travel along the $n$-th edge. Then after $\frac{L_n}{\vert d_n \vert}$ seconds, the solution of the $n$-th edge only depends on the boundary data, but the solution of edge $n-1$ can still depend on the initial condition of the $n$-th edge. A scheme of characteristics for a graph with $4$ edges is shown in \hyperref[fig:characteristicsOnFourEdges]{\textit{Figure \ref*{fig:characteristicsOnFourEdges}}}.
\end{remark}

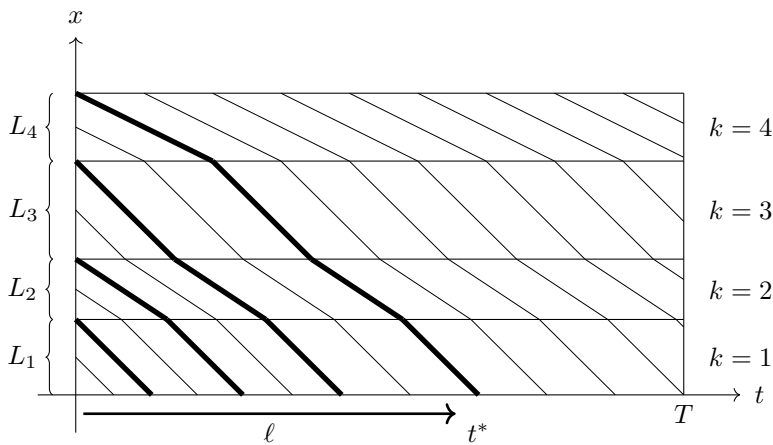
\begin{figure}[htbp]
	\centering
	\begin{tikzpicture}
		% Draw Coordinate System
		\node[minimum size=0.5cm] (A) at (9,0) {$t$};
		\node[minimum size=0.5cm] (B) at (0,5) {$x$};
		\draw[->] (-.5,0) to (A);
		\draw[->] (0,-.5) to (B);
		% Draw bounds
		\draw (8,0) to (8,4);
		\draw (0,4) to (8,4);
		\node[minimum size=0.5cm] (C) at (8,-.25) {$T$};
		% Draw lengths
		\draw (0,3.1) to (8,3.1);
		\draw (0,1.8) to (8,1.8);
		\draw (0,1) to (8,1);
		\draw[decorate, decoration={brace}, xshift=-2ex]  (0,0) -- node[left=0.5ex] {$L_1$}  (0,1);
		\draw[decorate, decoration={brace}, xshift=-2ex]  (0,1) -- node[left=0.5ex] {$L_2$}  (0,1.8);
		\draw[decorate, decoration={brace}, xshift=-2ex]  (0,1.8) -- node[left=0.4ex] {$L_3$}  (0,3.1);
		\draw[decorate, decoration={brace}, xshift=-2ex]  (0,3.1) -- node[left=0.4ex] {$L_4$}  (0,4);
		\node[minimum size=0.5cm] (D) at (8.75,3.55) {$k=4$};
		\node[minimum size=0.5cm] (E) at (8.75,2.45) {$k=3$};
		\node[minimum size=0.5cm] (F) at (8.75,1.4) {$k=2$};
		\node[minimum size=0.5cm] (G) at (8.75,0.5) {$k=1$};
		% Draw characteristics in L4
		\draw[line width=.25pt] (0,3.55) to (.9,3.1);
		\draw[line width=2pt] (0,4) to (1.8,3.1);
		\draw[line width=.25pt] (.9,4) to (2.7,3.1);
		\draw[line width=.25pt] (1.8,4) to (3.6,3.1);
		\draw[line width=.25pt] (2.7,4) to (4.5,3.1);
		\draw[line width=.25pt] (3.6,4) to (5.4,3.1);
		\draw[line width=.25pt] (4.5,4) to (6.3,3.1);
		\draw[line width=.25pt] (5.4,4) to (7.2,3.1);
		\draw[line width=.25pt] (6.3,4) to (8,3.15);
		\draw[line width=.25pt] (7.2,4) to (8,3.6);
		% Draw characteristics in L3
		\draw[line width=.25pt] (0,2.45) to (0.65,1.8);
		\draw[line width=2pt] (0,3.1) to (1.3,1.8);
		\draw[line width=.25pt] (.9,3.1) to (2.2,1.8);
		\draw[line width=2pt] (1.8,3.1) to (3.1,1.8);
		\draw[line width=.25pt] (2.7,3.1) to (4,1.8);
		\draw[line width=.25pt] (3.6,3.1) to (4.9,1.8);
		\draw[line width=.25pt] (4.5,3.1) to (5.8,1.8);
		\draw[line width=.25pt] (5.4,3.1) to (6.7,1.8);
		\draw[line width=.25pt] (6.3,3.1) to (7.6,1.8);
		\draw[line width=.25pt] (7.2,3.1) to (8,2.3);
		% Draw characteristics in L2
		\draw[line width=.25pt] (0,1.4) to (0.6,1);
		\draw[line width=2pt] (0,1.8) to (1.2,1);
		\draw[line width=.25pt] (0.65,1.8) to (1.85,1);
		\draw[line width=2pt] (1.3,1.8) to (2.5,1);
		\draw[line width=.25pt] (2.2,1.8) to (3.4,1);
		\draw[line width=2pt] (3.1,1.8) to (4.3,1);
		\draw[line width=.25pt] (4,1.8) to (5.2,1);
		\draw[line width=.25pt] (4.9,1.8) to (6.1,1);
		\draw[line width=.25pt] (5.8,1.8) to (7,1);
		\draw[line width=.25pt] (6.7,1.8) to (7.9,1);
		\draw[line width=.25pt] (7.6,1.8) to (8,1.53);
		% Draw characteristics in L1
		\draw[line width=.25pt] (0,0.5) to (0.5,0);
		\draw[line width=2pt] (0,1) to (1,0);
		\draw[line width=.25pt] (0.6,1) to (1.6,0);
		\draw[line width=2pt] (1.2,1) to (2.2,0);
		\draw[line width=.25pt] (1.85,1) to (2.85,0);
		\draw[line width=2pt] (2.5,1) to (3.5,0);
		\draw[line width=.25pt] (3.4,1) to (4.4,0);
		\draw[line width=2pt] (4.3,1) to (5.3,0);
		\node[minimum size=0.5cm] (E) at (5.3,-0.5) {$t^*$};
		\draw[line width=.25pt] (5.2,1) to (6.2,0);
		\draw[line width=.25pt] (6.1,1) to (7.1,0);
		\draw[line width=.25pt] (7,1) to (8,0);
		\draw[line width=.25pt] (7.9,1) to (8,0.9);
		% Draw \ell
		\draw[->, line width=1pt] (0.1,-0.25) to node[below] {$\ell$} (5,-0.25);
	\end{tikzpicture}
	\caption{Characteristics of (\ref{eq:lineaScalarModelOnLinearGraphWithOutflows}) on a graph with $4$ edges}
	\label{fig:characteristicsOnFourEdges}
\end{figure}

\begin{remark}
In the analytical solution of (\ref{eq:lineaScalarModelOnLinearGraphWithOutflows}) we only distinguish if the solution depends on the initial or the boundary condition, depending on the time and the location in the pipe. So for the solution of edge $4$ in \hyperref[fig:characteristicsOnFourEdges]{\textit{Figure \ref*{fig:characteristicsOnFourEdges}}}, we distinguish between
\begin{equation*}
	x \geq L_4 + d_4 t \quad \text{and} \quad x < L_4 + d_4 t.
\end{equation*}
That means, at the beginning of edge 4 (for $x = 0$), for $0 \leq t \leq -\frac{L_4}{d_4}$, the solution of edge $4$ depends on the initial condition of edge $4$. From this, it follows, that the solution of edge $3$ in \hyperref[fig:characteristicsOnFourEdges]{\textit{Figure \ref*{fig:characteristicsOnFourEdges}}} depends only on the initial condition of edge $3$ for
\begin{equation*}
	x < L_3 + d_3 t.
\end{equation*}
It depends on the boundary conditions of edge $3$ and the initial condition of edge $4$ (due to the coupling condition) for
\begin{equation*}
	L_3 + d_3 t \leq x < L_3 + d_3 t + d_3 \frac{L_4}{d_4},
\end{equation*}
and it depends on the boundary conditions of edge $3$ and edge $4$ for
\begin{equation*}
	x \geq L_3 + d_3 t + d_3 \frac{L_4}{d_4}.
\end{equation*}
This leads to the differentiation in \hyperref[theorem:scalarModelOnLinearGraph]{\textit{Theorem \ref*{theorem:scalarModelOnLinearGraph}}} in the case $x < d_k t + d_k \sum_{j=k}^n \frac{L_j}{d_j}$ ($k = 1, \cdots, n$).
\end{remark}

\begin{remark}
With the result of \hyperref[theorem:scalarModelOnLinearGraph]{\textit{Theorem \ref*{theorem:scalarModelOnLinearGraph}}} one can also derive analytical solutions of (\ref{eq:lineaScalarModelOnLinearGraphWithOutflows}) for tree-structured graphs, but one has to take into account, that the flow at the end of an edge (due to coupling conditions) can depend on more than one outgoing edges. That means the solution on a tree-structured graph is basically the sum over all paths of the solution stated in \hyperref[theorem:scalarModelOnLinearGraph]{\textit{Theorem \ref*{theorem:scalarModelOnLinearGraph}}}.
\end{remark}

\begin{proof}[\textbf{Proof of \hyperref[theorem:scalarModelOnLinearGraph]{\textit{Theorem \ref*{theorem:scalarModelOnLinearGraph}}}.}]
We define the following functions:
\begin{equation*} \begin{aligned}
	\alpha_{k,i}(x) &:= m_k \frac{x}{d_k} - \sum_{j=k}^i m_j \frac{L_j}{d_j}, \\
	\beta_{k,i}(t,x) &:= t - \frac{x}{d_k} + \sum_{j=k}^i \frac{L_j}{d_j}, \\
	\gamma_{k,\ell}(t,x) &:= m_\ell t - (m_\ell - m_k) \frac{x}{d_k} + \sum_{j=k}^{\ell-1} (m_\ell - m_j) \frac{L_j}{d_j}, \\
	\delta_{k,\ell}(t,x) &:= -d_\ell t + d_\ell \frac{x}{d_k} - d_\ell \sum_{j=k}^{\ell-1} \frac{L_j}{d_j}.
\end{aligned} \end{equation*}

We consider the $k$-th edge in a linear graph with $n$ edges ($k \in \{1, \cdots, n\}$.
\paragraph*{\textbf{Step 1: The PDE in (\ref{eq:lineaScalarModelOnLinearGraphWithOutflows}) holds: \\}}
For $x \geq d_k t + d_k \sum_{j=k}^n \frac{L_j}{d_j}$ we have
\begin{equation*}
	\frac{\partial}{\partial t} r_k(t,x) = \sum_{i=k}^n \exp \left( \alpha_{k,i}(x) \right) b'_i \left( \beta_{k,i}(t,x) \right)
\end{equation*}
and
\begin{equation*} \begin{aligned}
	d_k \frac{\partial}{\partial x}	r_k(t,x) &= d_k \sum_{i=k}^n \exp \left( \alpha_{k,i}(x) \right) \frac{m_k}{d_k} b_i \left( \beta_{k,i}(t,x) \right) \\
	&+ d_k \sum_{i=k}^n \exp \left( \alpha_{k,i}(x) \right) b'_i \left( \beta_{k,i}(t,x) \right) \left(-\frac{1}{d_k}\right).
\end{aligned} \end{equation*}
Thus it follows
\begin{equation*}
	\frac{\partial}{\partial t} r_k(t,x) + d_k \frac{\partial}{\partial x} r_k(t,x) = m_k \sum_{i=k}^n \exp \left( \alpha_{k,i}(x) \right) b_i \left( \beta_{k,i}(t,x) \right) = m_k r_k(t,x).
\end{equation*}
So the PDE in the system (\ref{eq:lineaScalarModelOnLinearGraphWithOutflows}) holds in the marked area in \hyperref[fig:characteristicsOnFourEdgesAreas]{\textit{Figure \ref*{fig:characteristicsOnFourEdgesAreas} (a)}}. For $x < d_k t + d_k \sum_{j=k}^n \frac{L_j}{d_j}$ and $\ell \in \{k, \cdots, n\}$, we have
\begin{equation*} \begin{aligned}
	\frac{\partial}{\partial t} r_k(t,x) = \sum_{i=k}^{\ell-1} \exp \left( \alpha_{k,i}(x) \right) b'_i \left( \beta_{k,i}(t,x) \right) &+ \exp \left( \gamma_{k,\ell}(t,x) \right) m_\ell\ r_{\ell,0} \left( \delta_{k,\ell}(t,x) \right) \\
	&+ \exp \left( \gamma_{k,\ell}(t,x) \right) r'_{\ell,0} \left( \delta_{k,\ell}(t,x) \right) (-d_\ell)
\end{aligned} \end{equation*}
and
\begin{equation*} \begin{aligned}
	d_k \frac{\partial}{\partial x} &=&& d_k \sum_{i=k}^{\ell-1} \exp \left( \alpha_{k,i}(x) \right) \frac{m_k}{d_k} b_i \left( \beta_{k,i}(t,x) \right) \\
	& &&+ d_k \sum_{i=k}^{\ell-1} \exp \left( \alpha_{k,i}(x) \right) b'_i \left( \beta_{k,i}(t,x) \right) \left(-\frac{1}{d_k}\right) \\
	& &&+ d_k \exp \left( \gamma_{k,\ell}(t,x) \right) \left( \frac{-m_\ell + m_k}{d_k} \right) r_{\ell,0} \left( \delta_{k,\ell}(t,x) \right) \\
	& &&+ d_k \exp \left( \gamma_{k,\ell}(t,x) \right) r'_{\ell,0} \left( \delta_{k,\ell}(t,x) \right) \frac{d_\ell}{d_k}.
\end{aligned} \end{equation*}
It follows
\begin{equation*} \begin{aligned}
	\frac{\partial}{\partial t} r_k(t,x) + d_k \frac{\partial}{\partial x} r_k(t,x) &=&& m_k \sum_{i=k}^{\ell-1} \exp \left( \alpha_{k,i}(x) \right) b_i \left( \beta_{k,i}(t,x) \right) \\
	& &&+ m_k \exp \left( \gamma_{k,\ell}(t,x) \right) r_{\ell,0} \left( \delta_{k,\ell}(t,x) \right) \\
	&=&& m_k r_k(t,x),
\end{aligned} \end{equation*}
and the PDE in system (\ref{eq:lineaScalarModelOnLinearGraphWithOutflows}) also holds in the marked area in \hyperref[fig:characteristicsOnFourEdgesAreas]{\textit{Figure \ref*{fig:characteristicsOnFourEdgesAreas} (b)}}.

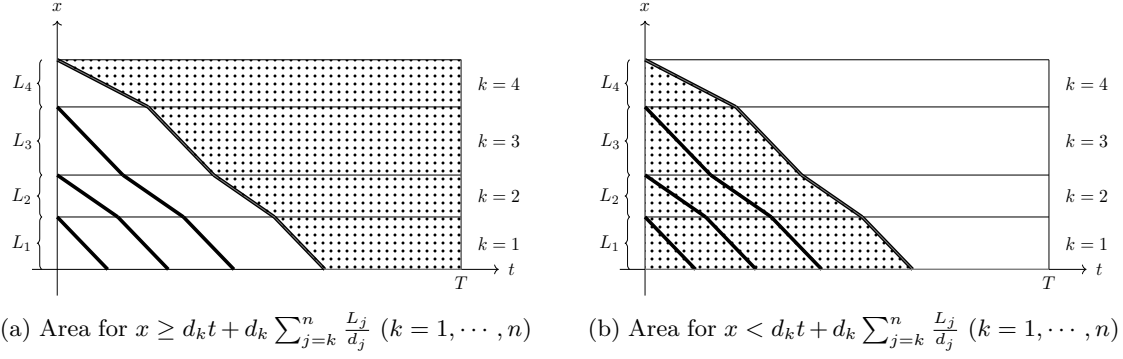
\begin{figure}[t]
	\begin{subfigure}[c]{7cm}
		\centering
		\resizebox{7cm}{4cm}{
			\begin{tikzpicture}
				% Draw Coordinate System
				\node[minimum size=0.5cm] (A) at (9,0) {$t$};
				\node[minimum size=0.5cm] (B) at (0,5) {$x$};
				\draw[->] (-.5,0) to (A);
				\draw[->] (0,-.5) to (B);
				% Draw bounds
				\draw (8,0) to (8,4);
				\draw (0,4) to (8,4);
				\node[minimum size=0.5cm] (C) at (8,-.25) {$T$};
				% Draw lengths
				\draw (0,3.1) to (8,3.1);
				\draw (0,1.8) to (8,1.8);
				\draw (0,1) to (8,1);
				\draw[decorate, decoration={brace}, xshift=-2ex]  (0,0) -- node[left=0.5ex] {$L_1$}  (0,1);
				\draw[decorate, decoration={brace}, xshift=-2ex]  (0,1) -- node[left=0.5ex] {$L_2$}  (0,1.8);
				\draw[decorate, decoration={brace}, xshift=-2ex]  (0,1.8) -- node[left=0.4ex] {$L_3$}  (0,3.1);
				\draw[decorate, decoration={brace}, xshift=-2ex]  (0,3.1) -- node[left=0.4ex] {$L_4$}  (0,4);
				\node[minimum size=0.5cm] (D) at (8.75,3.55) {$k=4$};
				\node[minimum size=0.5cm] (E) at (8.75,2.45) {$k=3$};
				\node[minimum size=0.5cm] (F) at (8.75,1.4) {$k=2$};
				\node[minimum size=0.5cm] (G) at (8.75,0.5) {$k=1$};
				% Draw characteristics in L4
				\draw[line width=2pt] (0,4) to (1.8,3.1);
				% Draw characteristics in L3
				\draw[line width=2pt] (0,3.1) to (1.3,1.8);
				\draw[line width=2pt] (1.8,3.1) to (3.1,1.8);
				% Draw characteristics in L2
				\draw[line width=2pt] (0,1.8) to (1.2,1);
				\draw[line width=2pt] (1.3,1.8) to (2.5,1);
				\draw[line width=2pt] (3.1,1.8) to (4.3,1);
				% Draw characteristics in L1
				\draw[line width=2pt] (0,1) to (1,0);
				\draw[line width=2pt] (1.2,1) to (2.2,0);
				\draw[line width=2pt] (2.5,1) to (3.5,0);
				\draw[line width=2pt] (4.3,1) to (5.3,0);
				% Mark Area
				\draw[gray, postaction={pattern=dots}] (0,4) -- (1.8,3.1) -- (3.1,1.8) -- (4.3,1) -- (5.3,0) -- (8,0) -- (8,4);
			\end{tikzpicture}
		}
		\caption{Area for $x \geq d_k t + d_k \sum_{j=k}^n \frac{L_j}{d_j}$ ($k = 1, \cdots, n$)}
	\end{subfigure}
	\hspace{.5cm}
	\begin{subfigure}[c]{7cm}
		\centering
		\resizebox{7cm}{4cm}{
			\begin{tikzpicture}
				% Draw Coordinate System
				\node[minimum size=0.5cm] (A) at (9,0) {$t$};
				\node[minimum size=0.5cm] (B) at (0,5) {$x$};
				\draw[->] (-.5,0) to (A);
				\draw[->] (0,-.5) to (B);
				% Draw bounds
				\draw (8,0) to (8,4);
				\draw (0,4) to (8,4);
				\node[minimum size=0.5cm] (C) at (8,-.25) {$T$};
				% Draw lengths
				\draw (0,3.1) to (8,3.1);
				\draw (0,1.8) to (8,1.8);
				\draw (0,1) to (8,1);
				\draw[decorate, decoration={brace}, xshift=-2ex]  (0,0) -- node[left=0.5ex] {$L_1$}  (0,1);
				\draw[decorate, decoration={brace}, xshift=-2ex]  (0,1) -- node[left=0.5ex] {$L_2$}  (0,1.8);
				\draw[decorate, decoration={brace}, xshift=-2ex]  (0,1.8) -- node[left=0.4ex] {$L_3$}  (0,3.1);
				\draw[decorate, decoration={brace}, xshift=-2ex]  (0,3.1) -- node[left=0.4ex] {$L_4$}  (0,4);
				\node[minimum size=0.5cm] (D) at (8.75,3.55) {$k=4$};
				\node[minimum size=0.5cm] (E) at (8.75,2.45) {$k=3$};
				\node[minimum size=0.5cm] (F) at (8.75,1.4) {$k=2$};
				\node[minimum size=0.5cm] (G) at (8.75,0.5) {$k=1$};
				% Draw characteristics in L4
				\draw[line width=2pt] (0,4) to (1.8,3.1);
				% Draw characteristics in L3
				\draw[line width=2pt] (0,3.1) to (1.3,1.8);
				\draw[line width=2pt] (1.8,3.1) to (3.1,1.8);
				% Draw characteristics in L2
				\draw[line width=2pt] (0,1.8) to (1.2,1);
				\draw[line width=2pt] (1.3,1.8) to (2.5,1);
				\draw[line width=2pt] (3.1,1.8) to (4.3,1);
				% Draw characteristics in L1
				\draw[line width=2pt] (0,1) to (1,0);
				\draw[line width=2pt] (1.2,1) to (2.2,0);
				\draw[line width=2pt] (2.5,1) to (3.5,0);
				\draw[line width=2pt] (4.3,1) to (5.3,0);
				% Mark Area
				\draw[gray, postaction={pattern=dots}] (0,4) -- (1.8,3.1) -- (3.1,1.8) -- (4.3,1) -- (5.3,0) -- (8,0) -- (0,0);
			\end{tikzpicture}
		}
		\caption{Area for $x < d_k t + d_k \sum_{j=k}^n \frac{L_j}{d_j}$ ($k = 1, \cdots, n$)}
	\end{subfigure}
	\caption{Areas in which the PDE of system (\ref{eq:lineaScalarModelOnLinearGraphWithOutflows}) holds}
	\label{fig:characteristicsOnFourEdgesAreas}
\end{figure}

\paragraph*{\textbf{Step 2: The initial conditions in (\ref{eq:lineaScalarModelOnLinearGraphWithOutflows}) hold:} \\}
Next we show, that the initial conditions hold. For $x < d_k t + d_k \sum_{j=k}^n \frac{L_j}{d_j}$ and $\ell = k$, we have
\begin{equation*}
	r_k(0,x) = \sum_{i=k}^{k-1} \exp \left( \alpha_{k,i}(x) \right) b_i \left( \beta_{k,i}(0,x) \right) + \exp \left( \gamma_{k,k}(0,x) \right) r_{k,0} \left( \delta_{k,k}(0,x) \right).
\end{equation*}
Since sums from $k$ to $k-1$ are equal to $0$, this leads to $\gamma_{k,k}(0,x) = 0$ and $\delta_{k,k}(0,x) = x$. Thus the initial conditions are fulfilled (see \hyperref[fig:characteristicsOnFourEdgesBoundaryInitial]{\textit{Figure \ref*{fig:characteristicsOnFourEdgesBoundaryInitial} (a)}}).

\paragraph*{\textbf{Step 3: The boundary conditions in (\ref{eq:lineaScalarModelOnLinearGraphWithOutflows}) hold:} \\}
For checking the boundary condition we consider (i.e. $k = n$ and $x \geq d_n t + L_n$), we have
\begin{equation*}  \begin{aligned}
	r_n(t,L_n) &= \sum_{i=n}^n \exp \left( \alpha_{n,i}(L_n) \right) b_i(\beta_{n,i}(t,L_n)) \\ &= \exp \left( \alpha_{n,n}(L_n) \right) b_n(\beta_{n,n}(t,L_n)) = b_n(t),
\end{aligned} \end{equation*}
since $\alpha_{n,n}(L_n) = 1$ and $\beta_{n,n}(t,L_n) = t$ (see \hyperref[fig:characteristicsOnFourEdgesBoundaryInitial]{\textit{Figure \ref*{fig:characteristicsOnFourEdgesBoundaryInitial} (b)}}).

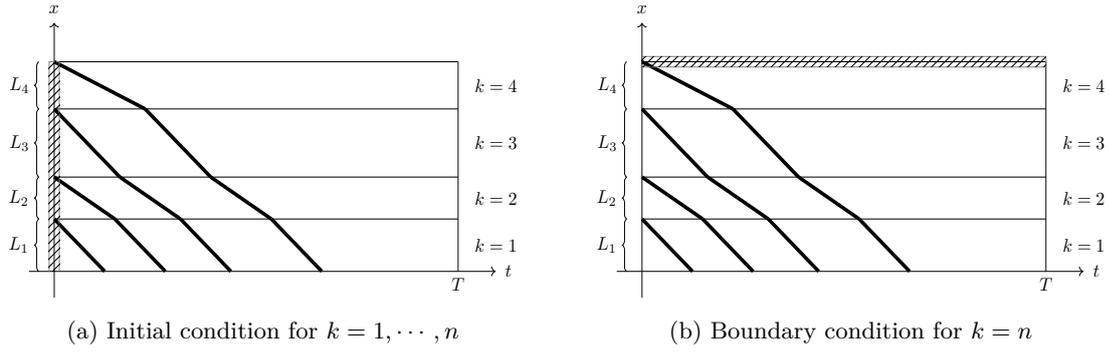
\begin{figure}[t]
	\begin{subfigure}[c]{7cm}
		\centering
		\resizebox{7cm}{4cm}{
			\begin{tikzpicture}
				% Draw Coordinate System
				\node[minimum size=0.5cm] (A) at (9,0) {$t$};
				\node[minimum size=0.5cm] (B) at (0,5) {$x$};
				\draw[->] (-.5,0) to (A);
				\draw[->] (0,-.5) to (B);
				% Draw bounds
				\draw (8,0) to (8,4);
				\draw (0,4) to (8,4);
				\node[minimum size=0.5cm] (C) at (8,-.25) {$T$};
				% Draw lengths
				\draw (0,3.1) to (8,3.1);
				\draw (0,1.8) to (8,1.8);
				\draw (0,1) to (8,1);
				\draw[decorate, decoration={brace}, xshift=-2ex]  (0,0) -- node[left=0.5ex] {$L_1$}  (0,1);
				\draw[decorate, decoration={brace}, xshift=-2ex]  (0,1) -- node[left=0.5ex] {$L_2$}  (0,1.8);
				\draw[decorate, decoration={brace}, xshift=-2ex]  (0,1.8) -- node[left=0.4ex] {$L_3$}  (0,3.1);
				\draw[decorate, decoration={brace}, xshift=-2ex]  (0,3.1) -- node[left=0.4ex] {$L_4$}  (0,4);
				\node[minimum size=0.5cm] (D) at (8.75,3.55) {$k=4$};
				\node[minimum size=0.5cm] (E) at (8.75,2.45) {$k=3$};
				\node[minimum size=0.5cm] (F) at (8.75,1.4) {$k=2$};
				\node[minimum size=0.5cm] (G) at (8.75,0.5) {$k=1$};
				% Draw characteristics in L4
				\draw[line width=2pt] (0,4) to (1.8,3.1);
				% Draw characteristics in L3
				\draw[line width=2pt] (0,3.1) to (1.3,1.8);
				\draw[line width=2pt] (1.8,3.1) to (3.1,1.8);
				% Draw characteristics in L2
				\draw[line width=2pt] (0,1.8) to (1.2,1);
				\draw[line width=2pt] (1.3,1.8) to (2.5,1);
				\draw[line width=2pt] (3.1,1.8) to (4.3,1);
				% Draw characteristics in L1
				\draw[line width=2pt] (0,1) to (1,0);
				\draw[line width=2pt] (1.2,1) to (2.2,0);
				\draw[line width=2pt] (2.5,1) to (3.5,0);
				\draw[line width=2pt] (4.3,1) to (5.3,0);
				% Mark Area
				\draw[gray, postaction={pattern=north east lines}] (-.1,0) -- (-.1,4) -- (.1,4) -- (.1,0);
			\end{tikzpicture}
		}
		\caption{Initial condition for $k = 1, \cdots, n$}
	\end{subfigure}
	\hspace{.5cm}
	\begin{subfigure}[c]{7cm}
		\centering
		\resizebox{7cm}{4cm}{
			\begin{tikzpicture}
				% Draw Coordinate System
				\node[minimum size=0.5cm] (A) at (9,0) {$t$};
				\node[minimum size=0.5cm] (B) at (0,5) {$x$};
				\draw[->] (-.5,0) to (A);
				\draw[->] (0,-.5) to (B);
				% Draw bounds
				\draw (8,0) to (8,4);
				\draw (0,4) to (8,4);
				\node[minimum size=0.5cm] (C) at (8,-.25) {$T$};
				% Draw lengths
				\draw (0,3.1) to (8,3.1);
				\draw (0,1.8) to (8,1.8);
				\draw (0,1) to (8,1);
				\draw[decorate, decoration={brace}, xshift=-2ex]  (0,0) -- node[left=0.5ex] {$L_1$}  (0,1);
				\draw[decorate, decoration={brace}, xshift=-2ex]  (0,1) -- node[left=0.5ex] {$L_2$}  (0,1.8);
				\draw[decorate, decoration={brace}, xshift=-2ex]  (0,1.8) -- node[left=0.4ex] {$L_3$}  (0,3.1);
				\draw[decorate, decoration={brace}, xshift=-2ex]  (0,3.1) -- node[left=0.4ex] {$L_4$}  (0,4);
				\node[minimum size=0.5cm] (D) at (8.75,3.55) {$k=4$};
				\node[minimum size=0.5cm] (E) at (8.75,2.45) {$k=3$};
				\node[minimum size=0.5cm] (F) at (8.75,1.4) {$k=2$};
				\node[minimum size=0.5cm] (G) at (8.75,0.5) {$k=1$};
				% Draw characteristics in L4
				\draw[line width=2pt] (0,4) to (1.8,3.1);
				% Draw characteristics in L3
				\draw[line width=2pt] (0,3.1) to (1.3,1.8);
				\draw[line width=2pt] (1.8,3.1) to (3.1,1.8);
				% Draw characteristics in L2
				\draw[line width=2pt] (0,1.8) to (1.2,1);
				\draw[line width=2pt] (1.3,1.8) to (2.5,1);
				\draw[line width=2pt] (3.1,1.8) to (4.3,1);
				% Draw characteristics in L1
				\draw[line width=2pt] (0,1) to (1,0);
				\draw[line width=2pt] (1.2,1) to (2.2,0);
				\draw[line width=2pt] (2.5,1) to (3.5,0);
				\draw[line width=2pt] (4.3,1) to (5.3,0);
				% Mark Area
				\draw[gray, postaction={pattern=north east lines}] (0,4.1) -- (8,4.1) -- (8,3.9) -- (0,3.9);
			\end{tikzpicture}
		}
		\caption{Boundary condition for $k=n$}
	\end{subfigure}
	\caption{Areas in which the initial and the boundary conditions of (\ref{eq:lineaScalarModelOnLinearGraphWithOutflows}) hold}
	\label{fig:characteristicsOnFourEdgesBoundaryInitial}
\end{figure}

\paragraph*{\textbf{Step 4: The coupling conditions in (\ref{eq:lineaScalarModelOnLinearGraphWithOutflows}) hold:} \\}
Finally, we have to check the coupling conditions. For $k = 1, \cdots, n-1$ and $x \geq d_k t + d_k \sum_{j=k}^n \frac{L_j}{d_j}$ it is $\alpha_{k,i}(L_k) = \alpha_{k+1,i}(0)$ and $\beta_{k,i}(t,L_k) = \beta_{k+1,i}(t,0)$. Thus we have
\begin{equation*} \begin{aligned}
	r_k(t,L_k) &=&& \sum_{i=k}^n \exp \left( \alpha_{k,i}(L_k) \right) b_i \left( \beta_{k,i}(t,L_k) \right) \\
	&=&& \sum_{i=k}^n \exp \left( \alpha_{k+1,i}(0) \right) b_i \left( \beta_{k+1,i}(t,0) \right) \\
	&=&& \exp \left( \alpha_{k+1,k}(0) \right) b_k \left( \beta_{k+1,k}(t,0) \right)  \\
	& &&+ \sum_{i=k+1}^n \exp \left( \alpha_{k+1,i}(0) \right) b_i \left( \beta_{k+1,i}(t,0) \right) \\
	&=&& b(t) + r_{k+1}(t,0),
\end{aligned} \end{equation*}
and the coupling conditions are fulfilled (see \hyperref[fig:characteristicsOnFourEdgesCoupling]{\textit{Figure \ref*{fig:characteristicsOnFourEdgesCoupling} (a)}}). For $k = 1, \cdots, n-1$, $x < d_k t + d_k \sum_{j=k}^n \frac{L_j}{d_j}$ and $\ell \in \{k+1, \cdots, n\}$ we have $\gamma_{k,\ell}(t,L_k) = \gamma_{k+1,\ell}(t,0)$ and $\delta_{k,\ell}(t,L_k) = \delta_{k+1,\ell}(t,0)$. It follows
\begin{equation*} \begin{aligned}
	r_k(t,L_k) &=&& \sum_{i=k}^{\ell-1} \exp \left( \alpha_{k,i}(L_k) \right) b_i \left( \beta_{k,i}(t,L_k) \right) + \exp \left( \gamma_{k,\ell}(t,L_k) \right) r_{\ell,0} \left( \delta_{k,\ell}(t,L_k) \right) \\
	&=&& b_k(t) \sum_{i=k+1}^{\ell-1} \exp \left( \alpha_{k+1,i}(0) \right) b_i \left( \beta_{k+1,i}(t,0) \right) \\
	& &&+ \exp \left( \gamma_{k+1,\ell}(t,0) \right) r_{\ell,0} \left( \delta_{k+1,\ell}(t,0) \right) \\
	&=&& b_k(t) + r_{k+1}(t,0).
\end{aligned} \end{equation*}
So the coupling conditions also hold in this case (see \hyperref[fig:characteristicsOnFourEdgesCoupling]{\textit{Figure \ref*{fig:characteristicsOnFourEdgesCoupling} (b)}}) and the theorem is proven.

\begin{figure}[htbp]
	\begin{subfigure}[c]{7cm}
		\centering
		\resizebox{7cm}{4cm}{
			\begin{tikzpicture}
				% Draw Coordinate System
				\node[minimum size=0.5cm] (A) at (9,0) {$t$};
				\node[minimum size=0.5cm] (B) at (0,5) {$x$};
				\draw[->] (-.5,0) to (A);
				\draw[->] (0,-.5) to (B);
				% Draw bounds
				\draw (8,0) to (8,4);
				\draw (0,4) to (8,4);
				\node[minimum size=0.5cm] (C) at (8,-.25) {$T$};
				% Draw lengths
				\draw (0,3.1) to (8,3.1);
				\draw (0,1.8) to (8,1.8);
				\draw (0,1) to (8,1);
				\draw[decorate, decoration={brace}, xshift=-2ex]  (0,0) -- node[left=0.5ex] {$L_1$}  (0,1);
				\draw[decorate, decoration={brace}, xshift=-2ex]  (0,1) -- node[left=0.5ex] {$L_2$}  (0,1.8);
				\draw[decorate, decoration={brace}, xshift=-2ex]  (0,1.8) -- node[left=0.4ex] {$L_3$}  (0,3.1);
				\draw[decorate, decoration={brace}, xshift=-2ex]  (0,3.1) -- node[left=0.4ex] {$L_4$}  (0,4);
				\node[minimum size=0.5cm] (D) at (8.75,3.55) {$k=4$};
				\node[minimum size=0.5cm] (E) at (8.75,2.45) {$k=3$};
				\node[minimum size=0.5cm] (F) at (8.75,1.4) {$k=2$};
				\node[minimum size=0.5cm] (G) at (8.75,0.5) {$k=1$};
				% Draw characteristics in L4
				\draw[line width=2pt] (0,4) to (1.8,3.1);
				% Draw characteristics in L3
				\draw[line width=2pt] (0,3.1) to (1.3,1.8);
				\draw[line width=2pt] (1.8,3.1) to (3.1,1.8);
				% Draw characteristics in L2
				\draw[line width=2pt] (0,1.8) to (1.2,1);
				\draw[line width=2pt] (1.3,1.8) to (2.5,1);
				\draw[line width=2pt] (3.1,1.8) to (4.3,1);
				% Draw characteristics in L1
				\draw[line width=2pt] (0,1) to (1,0);
				\draw[line width=2pt] (1.2,1) to (2.2,0);
				\draw[line width=2pt] (2.5,1) to (3.5,0);
				\draw[line width=2pt] (4.3,1) to (5.3,0);
				% Mark Area
				\draw[gray, postaction={pattern=north east lines}] (1.8,3) -- (1.8,3.2) -- (8,3.2) -- (8,3);
				\draw[gray, postaction={pattern=north east lines}] (3.1,1.7) -- (3.1,1.9) -- (8,1.9) -- (8,1.7);
				\draw[gray, postaction={pattern=north east lines}] (4.3,.9) -- (4.3,1.1) -- (8,1.1) -- (8,.9);
			\end{tikzpicture}
		}
		\caption{Coupling conditions for $x \geq d_k t + d_k \sum_{j=k}^n \frac{L_j}{d_j}$ ($k = 1, \cdots, n-1$)}
	\end{subfigure}
	\hspace{.5cm}
	\begin{subfigure}[c]{7cm}
		\centering
		\resizebox{7cm}{4cm}{
			\begin{tikzpicture}
				% Draw Coordinate System
				\node[minimum size=0.5cm] (A) at (9,0) {$t$};
				\node[minimum size=0.5cm] (B) at (0,5) {$x$};
				\draw[->] (-.5,0) to (A);
				\draw[->] (0,-.5) to (B);
				% Draw bounds
				\draw (8,0) to (8,4);
				\draw (0,4) to (8,4);
				\node[minimum size=0.5cm] (C) at (8,-.25) {$T$};
				% Draw lengths
				\draw (0,3.1) to (8,3.1);
				\draw (0,1.8) to (8,1.8);
				\draw (0,1) to (8,1);
				\draw[decorate, decoration={brace}, xshift=-2ex]  (0,0) -- node[left=0.5ex] {$L_1$}  (0,1);
				\draw[decorate, decoration={brace}, xshift=-2ex]  (0,1) -- node[left=0.5ex] {$L_2$}  (0,1.8);
				\draw[decorate, decoration={brace}, xshift=-2ex]  (0,1.8) -- node[left=0.4ex] {$L_3$}  (0,3.1);
				\draw[decorate, decoration={brace}, xshift=-2ex]  (0,3.1) -- node[left=0.4ex] {$L_4$}  (0,4);
				\node[minimum size=0.5cm] (D) at (8.75,3.55) {$k=4$};
				\node[minimum size=0.5cm] (E) at (8.75,2.45) {$k=3$};
				\node[minimum size=0.5cm] (F) at (8.75,1.4) {$k=2$};
				\node[minimum size=0.5cm] (G) at (8.75,0.5) {$k=1$};
				% Draw characteristics in L4
				\draw[line width=2pt] (0,4) to (1.8,3.1);
				% Draw characteristics in L3
				\draw[line width=2pt] (0,3.1) to (1.3,1.8);
				\draw[line width=2pt] (1.8,3.1) to (3.1,1.8);
				% Draw characteristics in L2
				\draw[line width=2pt] (0,1.8) to (1.2,1);
				\draw[line width=2pt] (1.3,1.8) to (2.5,1);
				\draw[line width=2pt] (3.1,1.8) to (4.3,1);
				% Draw characteristics in L1
				\draw[line width=2pt] (0,1) to (1,0);
				\draw[line width=2pt] (1.2,1) to (2.2,0);
				\draw[line width=2pt] (2.5,1) to (3.5,0);
				\draw[line width=2pt] (4.3,1) to (5.3,0);
				% Mark Area
				\draw[gray, postaction={pattern=north east lines}] (0,3.2) -- (0,3) -- (1.8,3) -- (1.8,3.2);
				\draw[gray, postaction={pattern=north east lines}] (0,1.9) -- (0,1.7) -- (3.1,1.7) -- (3.1,1.9);
				\draw[gray, postaction={pattern=north east lines}] (0,1.1) -- (0,.9) -- (4.3,.9) -- (4.3,1.1);
			\end{tikzpicture}
		}
		\caption{Coupling conditions for $x < d_k t + d_k \sum_{j=k}^n \frac{L_j}{d_j}$ ($k = 1, \cdots, n-1$)}
	\end{subfigure}
	\caption{Areas in which the coupling conditions of (\ref{eq:lineaScalarModelOnLinearGraphWithOutflows}) hold}
	\label{fig:characteristicsOnFourEdgesCoupling}
\end{figure}
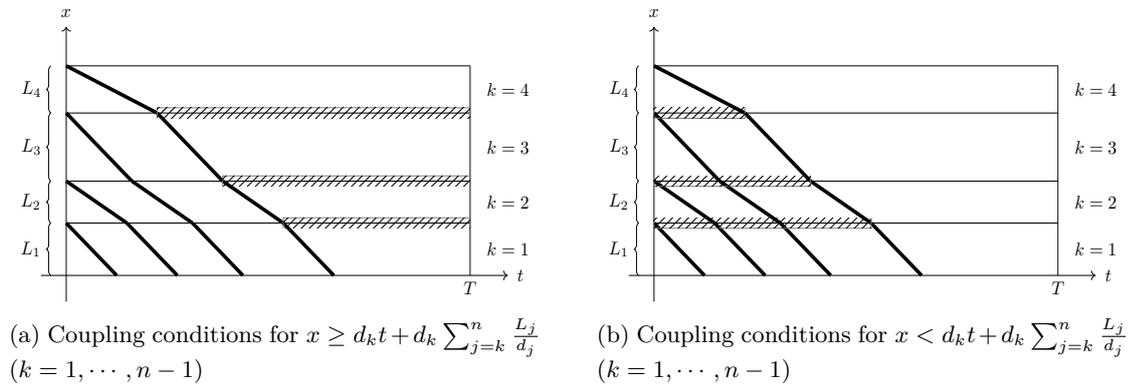

\end{proof}

In the next section, we consider model (\ref{eq:lineaScalarModelOnLinearGraphWithOutflows}) with uncertain boundary data.

\subsection{Stochastic loads for a scalar PDE}

The model (\ref{eq:lineaScalarModelOnLinearGraphWithOutflows}) describes the distribution of the water contamination in a network which is contaminated by the consumers at the nodes except $v_0$. At the end of the network (at node $v_0$), there are restrictions on the contamination rate. But because the contamination rate at the nodes cannot be known a priori, it can be seen as random. Of course one can expect a certain value from statistics or measurements, but this value is never exact. Therefore we use the random boundary data in a Fourier series representation, which was introduced before. In this section we first fix a point in time $t^* \in [0,T]$ and compute the probability, that the solution of (\ref{eq:lineaScalarModelOnLinearGraphWithOutflows}) with random boundary data satisfies box constraints at single point in time $t^* \in [0,T]$. Then we generalize this approach and compute the probability, that the box constraints are satisfied for all times $t \in [0,T]$ (cf. (\ref{eq:timeDependentProbabilisticConstraint})). \\

For $m \in \mathbb{N}_0$, we consider the Gaussian distributed random variables $a_m \sim \mathcal{N}(\mathbb{1}_n, \Sigma)$ with mean value $\mathbb{1}_n \in \mathbb{R}_+^n$ and positive definite covariance matrix $\Sigma \in \mathbb{R}^{n \times n}$ on an appropriate probability space $(\Omega, \mathcal{A}, \mathbb{P})$. Then the random boundary data at node $v_k$ (for $k \in \{1, \cdots, n\}$) is given by
\begin{equation*}
	b_k(t,\omega) = \sum_{m=0}^{\infty} a_{m,k}(\omega) a^0_{m,k} \psi_m(t),
\end{equation*}
with coefficients
\begin{equation*}
	a^0_{m,k} := \int_0^T (b_D)_k(t) \psi_m(t) dt,
\end{equation*}
and $\psi_m$ defined in (\ref{eq:FourierBasis}). Note again, that for the implementation, we cut the series after $N_F \in \mathbb{N}$ terms, which is a good approximation of $b$ for $N_F$ large enough (see \hyperref[remark:FourierSeries]{\textit{Remark \ref*{remark:FourierSeries}}}) and as mentioned before, if $(b_D)_k \in L^2(0,T)$, then $b_k \in L^2(0,T)$ $\mathbb{P}$-almost surely. Because water cannot get cleaned at the nodes, we are only interested in positive boundary values (cf. \hyperref[sec:stationary]{\textit{Section \ref*{sec:stationary}}}). Therefore, we assume that $b_D \in L^2(0,T)$ with $b_D \geq 0$ and that the parameter $\Sigma$ of the distribution of $a_m$ are chosen s.t. the probability that $b \geq 0$ is almost $1$. In practice this can be done as follows: Let $\gamma_k^* := \text{argmin}_{t \in [0,T]} b_k^D(t)$ and let $\mathfrak{I}_{k,_+}$ resp $\mathfrak{I}_{k,-}$ be the set of indices where $a_{m,k}^0 \psi_m(\gamma_k^*) \geq 0$ resp. where $a_{m,k}^0 \psi_m(\gamma_k^*) < 0$. We split the Fourier series in positive and negative terms, it follows
\begin{equation*}
	b_k^D(\gamma_k^*) = \sum_{m \in \mathfrak{I}_{k,+}} a_{m,k}^0 \psi_m(\gamma_k^*) + \sum_{m \in \mathfrak{I}_{k,-}} a_{m,k}^0 \psi_m(\gamma_k^*).
\end{equation*}
Mention that the $a_m$ all are identically distributed and $a_{m,k}$ has the variance $\sigma^2_k$. We use that fact, that a random Gaussian number $a_{m,k}(\omega)$ is in $[1 - 3\sigma_k, 1 + 3\sigma_k]$ with probability $99.73\%$. The worst case for a random scenario with random numbers $a_{k,m}(\omega) \in [1 - 3\sigma_k, 1 + 3\sigma_k]$ would be, if the positive terms get smaller and the negative terms get larger, i.e.,
\begin{equation*} \begin{aligned}
	b_k(\gamma_k^*) &= \sum_{m \in \mathfrak{I}_{k,+}} (1 - 3\sigma_k) a_{m,k}^0 \psi_m(\gamma_k^*) + \sum_{m \in \mathfrak{I}_{k,-}} (1 + 3\sigma_k) a_{m,k}^0 \psi_m(\gamma_k^*) \\
	&= b_k^D(\gamma_k^*) - 3\sigma_k \left( \sum_{m \in \mathfrak{I}_{k,+}} a_{m,k}^0 \psi_m(\gamma_k^*) - \sum_{m \in \mathfrak{I}_{k,-}} a_{m,k}^0 \psi_m(\gamma_k^*) \right).
\end{aligned} \end{equation*}
From this it follows, that $b_k(\gamma_k^*) \geq 0$, if
\begin{equation*}
\sigma_k \leq \frac{b_k^D(\gamma_k^*)}{3 \left( \sum_{m \in \mathfrak{I}_{k,+}} a_{m,k}^0 \psi_m(\gamma_k^*) - \sum_{m \in \mathfrak{I}_{k,-}} a_{m,k}^0 \psi_m(\gamma_k^*) \right)}.
\end{equation*}
For the implementation this is a quite cheap task since the terms of the Fourier series have to be computed anyway. When we would use a truncated Gaussian distribution for the $a_{m,k}$ bounded from below by $1 - 3 \sigma_k$ and bounded from above by $1 + 3 \sigma_k$, then we could guarantee that $b_k(t, \omega)$ is non negative on $[0,T]$.
As it is mentioned before, we want the solution at a time $t^* \in [0,T]$ at the node $v_0$ to satisfy box constraints, s.t.
\begin{equation} \label{eq:boxConstraints}
	r_1(t^*,0) \in \left[ r_0^{\min}, r_0^{\max} \right].
\end{equation}
 So the full model in this subsection is given in (\ref{eq:lineaScalarModelOnLinearGraphWithOutflows}). For this model, we define the set of feasible loads as
\begin{equation} \label{eq:feasibleSetScalarDynamic}
	M(t^*) := \left\{\ b \in L^2([0,T];\mathbb{R}^n_{\geq 0})\ \left\vert\ \begin{matrix} r_k(t,x) \text{ is a solution of } (\ref{eq:lineaScalarModelOnLinearGraphWithOutflows})\ (\text{for } k = 1, \cdots, n) \\ \text{such that } r_1(t^*,0) \in \left[ r_0^{\min}, r_0^{\max} \right] \end{matrix} \right.\ \right\}.
\end{equation}
Our aim in this subsection is, for a time $t^* \in [0,T]$, to compute the probability
\begin{equation*}
	\mathbb{P}(\ b \in M(t^*)\ ),
\end{equation*}
which is the probability, that for a random boundary function $b \in L^2(0,T)$, the solution of the linear system (\ref{eq:lineaScalarModelOnLinearGraphWithOutflows}) satisfies the box constraints (\ref{eq:boxConstraints}) at a point in time $t^* \in [0,T]$. From \hyperref[theorem:scalarModelOnLinearGraph]{\textit{Theorem \ref*{theorem:scalarModelOnLinearGraph}}} we know that
\begin{equation*}
	r_1(t,0) = \begin{cases}
		\sum_{i=1}^n \exp \left( - \sum_{j=1}^i m_j \frac{L_j}{d_j} \right) b_i^\omega \left( t + \sum_{j=1}^i \frac{L_j}{d_j} \right) & t \geq - \sum_{j=1}^n \frac{L_j}{d_j}, \\[10pt]
		\sum_{i=1}^{\ell-1} \exp \left( - \sum_{j=1}^i m_j \frac{L_j}{d_j} \right) b_i^\omega \left( t + \sum_{j=1}^i \frac{L_j}{d_j} \right) & t < - \sum_{j=1}^n \frac{L_j}{d_j} \\
		+ \exp \left( m_\ell t + \sum_{j=1}^{\ell-1} (m_\ell - m_j) \frac{L_j}{d_j} \right) r_{\ell,0} \left( - d_\ell t - d_\ell \sum_{j=1}^{\ell-1} \frac{L_j}{d_j} \right) & (\ell = 1, \cdots, n), \end{cases}
\end{equation*}
where $b^\omega$ denotes the realization $b(\omega)$ of the random boundary data for $\omega \in \Omega$.
For $i = 1, \cdots, n$, we define the (time dependent) values
\begin{equation*}
	\mathcal{C}_i := \exp \left( - \sum_{j=1}^i m_j \frac{L_j}{d_j} \right),
\end{equation*}
and
\begin{equation*}
	\mathcal{C}_i^0(t) := \exp \left( m_i t + \sum_{j=1}^{i-1} (m_i - m_j)\frac{L_j}{d_j} \right) r_{i,0} \left( -d_i t - d_i \sum_{j=1}^{i-1} \frac{L_j}{d_j} \right).
\end{equation*}
Then, $b$ is feasible at time $t^* \in [0,T]$, iff
\begin{equation} \label{eq:inequalityDynamicLargeTimes}
	r_0^{\min} \leq \sum_{i=1}^n \mathcal{C}_i b_i^\omega \left( t^* + \sum_{j=1}^i \frac{L_j}{d_j} \right) \leq r_0^{\max},
\end{equation}
for $t^* \geq - \sum_{j=1}^n \frac{L_j}{d_j}$ and it is feasible, iff
\begin{equation} \label{eq:inequalityDynamicSmallTimes}
	r_0^{\min} \leq \sum_{j=1}^{\ell-1} \mathcal{C}_i b_i^\omega \left( t^* + \sum_{j=1}^i \frac{L_j}{d_j} \right) + \mathcal{C}^0_\ell(t^*) \leq r_0^{\max},
\end{equation}
for $-\sum_{j=k}^{\ell-1} \frac{L_j}{d_j} \leq t^* < - \sum_{j=1}^n \frac{L_j}{d_j}$ ($\ell \in \{1, \cdots, n\}$). Due to the distribution of the random values $a_m$ ($m = 0, 1, \cdots $), we have
\begin{equation*}
	b \sim \mathcal{N}(\mu_b(t), \Sigma_b(t)),
\end{equation*}
with $\mu_b(\cdot) \in \mathbb{R}_+^n$ and $\Sigma_b(\cdot) \in \mathbb{R}^{n \times n}$ positive definite.
To compute the desired probability for a point in time $t^* \in [0,T]$, we use the idea of the SRD. For a point $s \in \mathbb{S}^{n-1}$ at the unit sphere, we set
\begin{equation*}
	b_s(\hat{r},t) = \hat{r} \mathcal{L}_b(t) s + \mu_b(t) = \hat{r} \pi_b(t) + \mu_b(t),
\end{equation*}
with $\pi_b(t) = \mathcal{L}_b(t) v$ and $\mathcal{L}$, s.t. $\mathcal{L}_b(t) \mathcal{L}_b^\top(t) = \Sigma_b(t)$. Because we are only interested in positive boundary values, we define the regular range as
\begin{equation*}
	R_{s,\text{reg}} := \{ \hat{r} \geq 0\ \vert\ b_s(\hat{r},t) \geq 0 \quad \forall t \in [0,T]\}.
\end{equation*}
Thus, similar to the stationary case, the (time dependent) one-dimensional sets
\begin{equation*}
	M_s(t^*) = \{ \hat{r} \in R_{s,\text{reg}}\ \vert\ b_s(\hat{r},\cdot) \in M(t^*) \}
\end{equation*}
at a point in time $t^*$ can be computed by intersecting the regular range with the inequality (\ref{eq:inequalityDynamicLargeTimes}) resp. (\ref{eq:inequalityDynamicSmallTimes}). If $t^* \geq - \sum_{j=1}^n \frac{L_j}{d_j}$, then from ({\ref{eq:inequalityDynamicLargeTimes}) it follows
\begin{equation} \label{eq:SRDDynIneq1}
	r_0^{\min} \leq \sum_{i=1}^n \mathcal{C}_i \left[ \hat{r} \pi_{b,i} \left( t^* + \sum_{j=1}^i \frac{L_j}{d_j} \right) + \mu_{b,i} \left( t^* + \sum_{j=1}^i \frac{L_j}{d_j} \right) \right] \leq r_0^{\max}.
\end{equation}
Define the values
\begin{equation*}
	a_1 := \frac{ r_0^{\min} - \sum_{i=1}^n C_i\ \mu_{b,i} \left( t^* + \sum_{j=1}^i \frac{L_j}{d_j} \right) }{ \sum_{i=1}^n C_i\ \pi_{b,i} \left( t^* + \sum_{j=1}^i \frac{L_j}{d_j} \right) },
\end{equation*}
and
\begin{equation*}
	a_2 := \frac{ r_0^{\max} - \sum_{i=1}^n C_i\ \mu_{b,i} \left( t^* + \sum_{j=1}^i \frac{L_j}{d_j} \right) }{ \sum_{i=1}^n C_i\ \pi_{b,i} \left( t^* + \sum_{j=1}^i \frac{L_j}{d_j} \right) }.
\end{equation*}
Then we have
\begin{equation*}
	M_s(t^*) = R_{s,\text{reg}} \cap \begin{cases} [a_1, a_2] & \text{if } \sum_{i=1}^n C_i\ \pi_{b,i} \left( t^* + \sum_{j=1}^i \frac{L_j}{d_j} \right) \geq 0, \\ [a_2, a_1] & \text{else}. \end{cases}
\end{equation*}
If $t^* < - \sum_{j=1}^n \frac{L_j}{d_j}$ ($\ell = 1, \cdots, n$), then from (\ref{eq:inequalityDynamicSmallTimes}) it follows
\begin{equation} \label{eq:SRDDynIneq2}
	r_0^{\min} \leq \sum_{j=1}^{\ell-1} \mathcal{C}_i \left[ \hat{r} \pi_{b,i} \left( t^* + \sum_{j=1}^i \frac{L_j}{d_j} \right) + \mu_{b,i} \left( t^* + \sum_{j=1}^i \frac{L_j}{d_j} \right) \right] + \mathcal{C}^0_\ell(t^*) \leq r_0^{\max}.
\end{equation}
Define the values
\begin{equation*}
	a_1 := \frac{ r_0^{\min} - C^0_\ell(t^*) - \sum_{i=1}^{\ell-1} C_i\ \mu_{b,i} \left( t^* + \sum_{j=1}^i \frac{L_j}{d_j} \right) }{ \sum_{i=1}^{\ell-1} C_i\ \pi_{b,i} \left( t^* + \sum_{j=1}^i \frac{L_j}{d_j} \right) },
\end{equation*}
and
\begin{equation*}
	a_2 := \frac{ r_0^{\max} - C^0_\ell(t^*) - \sum_{i=1}^{\ell-1} C_i\ \mu_{b,i} \left( t^* + \sum_{j=1}^i \frac{L_j}{d_j} \right) }{ \sum_{i=1}^{\ell-1} C_i\ \pi_{b,i} \left( t^* + \sum_{j=1}^i \frac{L_j}{d_j} \right) }.
\end{equation*}
Then we have
\begin{equation*}
	M_s(t^*) = R_{s,\text{reg}} \cap \begin{cases} [a_1, a_2] & \text{if } \sum_{i=1}^{\ell-1} C_i\ \pi_{b,i} \left( t^* + \sum_{j=1}^i \frac{L_j}{d_j} \right) \geq 0, \\ [a_2, a_1] & \text{else}. \end{cases}
\end{equation*}
Thus, for every point in time $t^* \in [0,T]$, the set $M_s(t^*)$ can be represented as a union of disjoint intervals, which we can use to compute the probability for the random boundary function $b$ to be feasible, like in (\ref{eq:computeProbabilityOneEdge}). \\

Next, we approximate the probability $\mathbb{P}( b \in M(t^*))$ by using the KDE approach introduced in subsection \ref{sec:stationarySingleEdge}. We consider the stochastic equation corresponding to \eqref{eq:lineaScalarModelOnLinearGraphWithOutflows} with random boundary data. Note that this equation has also a solution $\mathbb{P}$-almost surely. We assume that the distribution of the random variable $r_1(t^*,0)$ is absolutely continuous with probability density function $\varrho_{r,t^*}$ for $t^* \in [0,T]$. Thus the point in time $t^*$ has to be large enough, so that $r_1(t^*,0)$ depends on the random boundary data as it is explained in \hyperref[remark:informationTransport]{\textit{Remark \ref*{remark:informationTransport}}}. Similar to section \ref{sec:stationarySingleEdge}, it holds
\begin{equation*}
\mathbb{P} \left(b \in M(t^*) \right) = \mathbb{P} \left( r_1(t^*,0) \in [r_0^{\min}, r_0^{\max}] \right) = \int_{r_0^{\min}}^{r_0^{\max}} \varrho_{r,t^*}(z) ~ dz	.
\end{equation*}
In order to approximate the unknown probability density function by the KDE, we need a sampling set for the random variable $r_1(t^*,0)$. Therefore, let
\begin{equation*}
	\mathcal{A}_m := \{  a_{m}^{\mathcal{S},1}, \cdots, a_{m}^{\mathcal{S},N} \}
\end{equation*}
for $m=1,\cdots,N_F$ be independent and identically distributed sampling sets of $a_m \sim \mathcal{N}(\mu, \Sigma)$ (with mean value $\mu \in \mathbb{R}_+^n$ and $\Sigma \in \mathbb{R}^{n \times n}$ positive definite). Let
\begin{equation*}
	\mathcal{B}_{\mathcal{A}} := \{ b^{\mathcal{S},1}, \cdots, b^{\mathcal{S},N} \}
\end{equation*}
with $b^{S,i} = \sum_{m=0}^{N_F} a_m^{S,i} a_m^0 \phi_m$ be the corresponding sampling of the random boundary function, where $b^{S,i} \in L^2(0,T)$ with $b^{S,i} \geq 0$ ($i = 1, \cdots, N$) $\mathbb{P}$-almost surely.
 With this and \hyperref[theorem:scalarModelOnLinearGraph]{\textit{Theorem \ref*{theorem:scalarModelOnLinearGraph}}} (or an appropriate numerical method for solving the system (\ref{eq:lineaScalarModelOnLinearGraphWithOutflows})), we define the sample
\begin{equation*}
	\mathcal{R}_* := \{ r_1(t^*,0,b^{\mathcal{S},1}), \cdots, r_1(t^*,0,b^{\mathcal{S},N}) \},
\end{equation*}
where $r_1(t^*,0,b^{\mathcal{S},i})$ ($i = 1, \cdots, N$) is the solution of (\ref{eq:lineaScalarModelOnLinearGraphWithOutflows}) at node $v_0$ at time $t^* \in [0,T]$ with boundary function $b^{S,i} \in \mathcal{B}_{\mathcal{A}}$. The samplings $\mathcal{B}_{\mathcal{A}}$ and $\mathcal{R}_*$ are also independent and identically distributed. Then for a bandwidth $h \in \mathbb{R}_+$, the probability density function $\varrho_{r,t^*}$ is approximately given by
\begin{equation*}
	\varrho_{r,t^*,N}(z) = \frac{1}{N\ h} \sum_{i=1}^N \frac{1}{\sqrt{2 \pi}} \exp \left( -\frac{1}{2} \left( \frac{z - r_1(t^*,0,b^{\mathcal{S},i})}{h} \right)^2 \right).
\end{equation*}
We choose the bandwidth according to \eqref{eq:bandwidth}. Therefore, we get the same convergence results for the KDE resp. the approximated probability as in \eqref{eq:Konv1D} resp. \eqref{Konv2D}.
So we can approximate the probability for a random boundary function $b$ to be feasible at time $t^* \in [0,T]$ by
\begin{equation} \label{eq:probabilityKDEScalarDynamic}
	\mathbb{P}(\ b \in M(t^*)\ ) \approx \int_{r_0^{\min}}^{r_0^{\max}} \varrho_{r,t^*,N}(z) dz =: \mathbb{P}_N \left( b \in M(t^*) \right)
\end{equation}

So far, the computation of the desired probability in this subsection was only for box constraints at a certain point in time $t^* \in [0,T]$, e.g., the end time $t^* = T$. As mentioned before, we are interested in box constraints for the full time period, which leads to a probabilistic constraint given in (\ref{eq:timeDependentProbabilisticConstraint}). The idea is, that $r_1(t,0)$ satisfies the box constraints for all $t \in [0,T]$, iff the maximum and the minimum value of $r_1(t,0)$ in $[0,T]$ satisfies the box constraints:
\begin{equation*} \begin{gathered}
	r_1(t,0) \in [ r_0^{\min}, r_0^{\max} ] \quad \forall t \in [0,T] \\
	\Updownarrow \\
	\begin{bmatrix} r_0^{\min} \\ r_0^{\min} \end{bmatrix} \leq \begin{bmatrix} \max_{t \in [0,T]} r_1(t,0) \\ \min_{t \in [0,T]} r_1(t,0) \end{bmatrix} \leq \begin{bmatrix} r_0^{\max} \\ r_0^{\max} \end{bmatrix}.
\end{gathered} \end{equation*}

We define the values
\begin{equation*}
	\underline{t} := \text{argmin}_{t \in [0,T]} r_1(t,0),
\end{equation*}
and
\begin{equation*}
	\overline{t} := \text{argmax}_{t \in [0,T]} r_1(t,0).
\end{equation*}
For the SRD we use a similar procedure as above. We have
\begin{equation*}
	\mathbb{P}(b \in M(t)\ \forall t \in [0,T]) \quad \Leftrightarrow \quad \mathbb{P}(b \in M(\underline{t}) \text{ and } b \in M(\overline{t}) ).
\end{equation*}
That means, we need to intersect the regular range with two inequalities of the form (\ref{eq:SRDDynIneq1}) resp. (\ref{eq:SRDDynIneq2}) to get the set $M_s(\underline{t}) \cap M_s(\overline{t})$ and to compute the desired probability. This is only possible, if one compute $\underline{t}$ and $\overline{t}$ for the deterministic boundary function $b_D(t)$. Otherwise, due to the randomness of the boundary functions $b(t)$, the \textit{argmin} and the \textit{argmax} can be shifted and thus, the $\underline{t}$ and $\overline{t}$ depend on this uncertainty. So a general $\underline{t}$ and $\overline{t}$ does not exist and it is not clear, where to evaluate the mean and the variance in (\ref{eq:SRDDynIneq1}) resp. (\ref{eq:SRDDynIneq2}). But if we use the deterministic boundary function, the $\underline{t}$ and the $\overline{t}$ does not meet the minimal and maximal values of the random boundary functions and thus, the result may not be significant. \\

The KDE uses only the solution of the model for estimating an analytical probability density function, which we can use later for the optimization. To extend the KDE to probabilistic constraints like (\ref{eq:timeDependentProbabilisticConstraint}), we consider the minimal and maximal contamination concentration at node $v_0$ in the time period $[0,T]$.
We assume that the distribution of the random vector $R:= \left( \min_{t \in [0,T]}r_1(t,0), \max_{t \in [0,T]} r_1(t,0) \right)^T$ is absolutely continuous with probability density function $\varrho_R$. Using this time independent variable, we get
\begin{equation*}
\begin{aligned}
\mathbb{P} \left(  b \in M(t)\ \forall t \in [0,T] \right)
&= \mathbb{P} \left( R \in [r_0^{\min}, r_0^{\max}] \times [r_0^{\min}, r_0^{\max}] \right) \\
&=\int_{[r_0^{\min}, r_0^{\max}] \times [r_0^{\min}, r_0^{\max}]} \varrho_R(z) ~ dz	.
\end{aligned}
\end{equation*}
We use now a two dimensional KDE like in the stationary case for tree-structured graphs. For the set $\mathcal{B}_{\mathcal{A}}$ we define the sampling set of the random variable $R$ by
\begin{equation*}
\mathcal{R} = \left\{ (\underline{r}_{1}^{S,1},\overline{r}_{1}^{S,1})^T, \cdots, (\underline{r}_{1}^{S,N},\overline{r}_{1}^{S,N})^T  \right\}
\end{equation*}
with
\begin{equation*}
	\underline{r}_1^{S,i} := \min_{t \in [0,T]} r_1(t,0,b^{\mathcal{S},i}(t)) \quad (i = 1, \cdots, N)
\end{equation*}
and
\begin{equation*}
	\overline{r}_1^{S,i} := \max_{t \in [0,T]} r_1(t,0,b^{\mathcal{S},i}(t)) \quad (i = 1, \cdots, N).
\end{equation*}
This sampling is also independent and identically distributed. Using the KDE given in \eqref{eq:multiKDE} with bandwidth \eqref{eq:multiBandwidth}, the probability density function of the minimal and maximal contamination rate at node $v_0$ in the time period $[0,T]$ is approximately given by
\begin{equation*} \begin{aligned}
	\varrho_{R,N}(z) = \frac{1}{N h_y^2 \sigma_{N,1}^{\min} \sigma_{N,1}^{\max}} \sum_{i=1}^N \frac{1}{2 \pi} \exp \left( -\frac{1}{2} \left( \frac{z_1 - \underline{r}_1^{S,i}}{h_y \sigma_{N,1}^{\min}} \right)^2 \right) \\
	\cdot \exp \left( -\frac{1}{2} \left( \frac{z_2 - \overline{r}_1^{S,i}}{h_y \sigma_{N,1}^{\max}} \right)^2 \right),
\end{aligned} \end{equation*}
where $(\sigma_{N,1}^{\min})^2$ and $(\sigma_{N,1}^{\max})^2$ are the variances of $\underline{r}_1^{S,i}$ and $\overline{r}_1^{S,i}$ ($i = 1, \cdots, N$). For this estimator we get the same convergence results for the KDE resp. the approximated probability as in \eqref{KonvMehrdim} and \eqref{ScheffeMehrdim}. Thus we approximate the desired probability as follows:
\begin{equation*}
	\mathbb{P} \left( b \in M(t) \ \forall t \in [0,T] \right) \approx \int\limits_{[r_0^{\min}, r_0^{\max}] \times [r_0^{\min}, r_0^{\max}]} \varrho_{R,N}(z) dz
	=: \mathbb{P}_N \left( b \in M(t)\ \forall t \in [0,T] \right).
\end{equation*}

\begin{remark} \label{remark:minmax}
If $\underline{r}_1^{S,i}$ or $\overline{r}_1^{S,i}$ is taken in the time period, in which the solution depends on the initial data, then the distribution function of the minimal resp. maximal contamination rate contains a discontinuity. Thus, a probability density function in the classical sense does not even exist. So one has to guarantee, that $\underline{r}_1^{S,i}$ and $\overline{r}_1^{S,i}$ is not taken in the beginning of the time period, e.g., by excluding this part from the probabilistic constraint. As it is mentioned in \hyperref[remark:informationTransport]{\textit{Remark \ref*{remark:informationTransport}}}, for times $t \geq t^*$ with
\begin{equation*}
	t^* = \sum_{j=1}^n \frac{L_j}{\vert d_j \vert}
\end{equation*}
the solution does not depend on the initial condition anymore. So instead of solving (\ref{eq:timeDependentProbabilisticConstraint}) one can solve
\begin{equation*}
	\mathbb{P}(b \in M(t)\ \forall t \in [t^*, T]) \geq \alpha.
\end{equation*}
Motivated by the application this makes sense since the initial state is either given a priori or can be chosen a priori s.t. all bounds are satisfied for small times.
\end{remark}

\label{example3}
\paragraph*{Example 3:}\ Consider the graph with one edge shown in \hyperref[figure:exampleDynamic]{\textit{Figure \ref*{figure:exampleDynamic}}}.

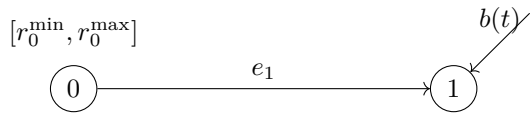
\begin{figure}[htbp]
	\centering
	\begin{tikzpicture}
		\node [minimum size=0.5cm] (A) at (0,0) [circle, draw] {$0$};
		\node [minimum size=0.5cm] (B) at (5,0) [circle, draw] {$1$};
		\node (C) at (0,.75) {$[r_0^{\min}, r_0^{\max}]$};

		\draw[->] (A) to node[above] {$e_1$} (B);
		\draw[->] (6,1) to node[above] {$b(t)$} (B);
	\end{tikzpicture}
	\caption{Example graph with $2$ nodes}
	\label{figure:exampleDynamic}
\end{figure}

For the computation, we use $r_0(x) = 5 \exp \left( \frac{m}{d} (x-L) \right)$ as initial function, $b_D(t) = -2 \sin(2t) + 5$ and $b(t,\omega) = \sum_{m=0}^{\infty} a_m(\omega) a^0_m \psi_m(t) + 5$ as random boundary function. The coefficients $a^0_m$ for the shifted function $b_D(t) - 5$ are given by (\ref{eq:FourierCoefficients}). The initial function is chosen s.t. the solution is constant at $x = 0$ for small times, so we guarantee, that all minimal values are below this constant value and all maximal values are above this constant value (see \hyperref[remark:minmax]{\textit{Remark \ref*{remark:minmax}}}). The other values are given in \hyperref[tab:valuesDynamicExample]{\textit{Table \ref*{tab:valuesDynamicExample}}}.

\begin{table} [htbp]
	\centering
	\begin{tabular}{| c | c | c | c | c | c | c | c |}
		\hline
		$r_0^{\min}$ 	& $r_0^{\max}$ 	& $\mu$	& $\sigma$	& $d$	& $m$	& $L$	& $T$	\\ \hline
		$2$				& $6$			& $1$	& $0.25$	& $-5$	& $-1$ 	& $1$	& $4$	\\ \hline
	\end{tabular}
	\caption{Values for the dynamic example}
	\label{tab:valuesDynamicExample}
\end{table}

Further, we use $101$ points for the time discretization and we cut the Fourier series of the random boundary data after $30$ terms. A sampling of $10$ random boundary functions and the corresponding solutions are shown in \hyperref[figure:randomBoundaryAndSolution]{\textit{Figure \ref*{figure:randomBoundaryAndSolution}}}.

\begin{figure} [htbp]
	\centering
	\begin{subfigure}[c]{7cm}
		\centering
		\includegraphics[width=7cm]{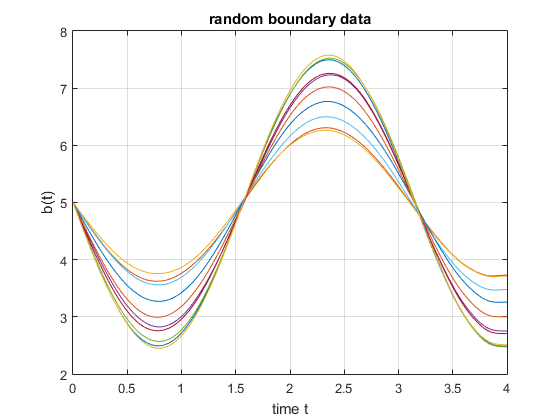}
		\caption{$10$ random boundary functions\\ \phantom{a}}
	\end{subfigure}
	\begin{subfigure}[c]{7cm}
		\centering
		\includegraphics[width=7cm]{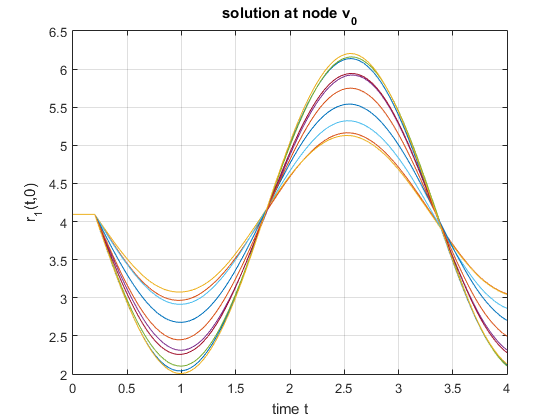}
		\caption{$r_1(t,0)$ for the random boundary data (scaling of $y$-axis is different from (a))}
	\end{subfigure}
	\caption{Sampling of $10$ random boundary functions and the corresponding solutions at node $v_0$}
	\label{figure:randomBoundaryAndSolution}
\end{figure}

For the MATLAB$\textsuperscript{\textregistered}$ implementation, we use a sampling of $1 \cdot 10^5$ boundary functions in terms of Fourier series. We compare the probabilities of the KDE again with a classical Monte Carlo method (MC). The MC method checks for a random boundary function, if the bounds are satisfied for every point of the time discretization. If this is not the case, this boundary function is not feasible. The results of the tests are shown in \hyperref[tab:oneEdgeDynamic]{\textit{Table \ref*{tab:oneEdgeDynamic}}}.

\begin{table} [h]
	\centering
	\begin{tabular}{| c | c | c | c | c | c | c | c | c |}
		\hline
			& Test 1	& Test 2	& Test 3	& Test 4	& Test 5	& Test 6	& Test 7	& Test 8	\\
		\hline
		MC	& $74.32\%$ & $74.39\%$ & $74.22\%$ & $74.32\%$ & $74.24\%$ & $74.33\%$ & $74.51\%$ & $74.40\%$ \\
		\hline
		KDE	& $74.33\%$ & $74.39\%$ & $74.21\%$ & $74.31\%$ & $74.24\%$ & $74.32\%$ & $74.51\%$ & $74.40\%$ \\
		\hline
		%SRD	& $74.95\%$ & $74.95\%$ & $74.95\%$ & $74.95\%$ & $74.95\%$ & $74.95\%$ & $74.95\%$ & $74.95\%$ \\
		%\hline
		%SRD & \multicolumn{8}{c |}{$74.95\%$} \\
		%\hline
	\end{tabular}
	\caption{Results for the dynamic example with one edge}
	\label{tab:oneEdgeDynamic}
\end{table}

One can see, that the results of MC and the KDE are almost equal. The mean probability in MC resp. KDE is $74.38\%$ resp. $74.37\%$ and the variance is $0.0141$ resp. $0.0142$. For a confidence level of $95\%$ the confidence interval for the MC probability is $[74.26\%, 74.42\%]$, which is also the confidence interval of the KDE probability. Both methods are still quite fast. MATLAB$\textsuperscript{\textregistered}$ needs much more time ($\sim 1$ minute) for the sampling and computing the random boundary data than for computing the probabilities, which is less then one second. \\

In this subsection, we have considered the SRD and the KDE in a dynamic setting based on the results from \hyperref[sec:stationary]{\textit{Section \ref*{sec:stationary}}}. First the box constraints only hold for a certain point in time $t^* \in [0,T]$ and then, the box constraints hold for the full time period $[0,T]$.

\subsection{Stochastic optimization on dynamic flow networks}

In this subsection, we formulate necessary optimality conditions for the dynamic hyperbolic system introduced before. In the subsection before, we introduced different ways to compute the probability for a random boundary function to be feasible. Using the KDE gives us a good approximation of this probability. Define the set
\begin{equation*}
	\mathcal{R}_0 := [r_0^{\min}, \infty),
\end{equation*}
and the function
\begin{equation*}
	f: \mathbb{R} \rightarrow \mathbb{R}, \quad (r_0^{\max}) \mapsto f(r_0^{\max}).
\end{equation*}
For a probability level $\alpha \in (0,1)$, consider the optimization problem with the approximated probabilistic constraints
\begin{equation} \label{eq:optimizationBoundsDynamic} \left\{ \begin{aligned}
	\min_{r_0^{\max} \in \mathcal{R}_0} \quad & f(r_0^{\max}) \\
	\text{s.t.} \quad &\mathbb{P}_N \left( b \in M(t) \ \forall t \in [0,T] \right) \geq \alpha
\end{aligned} \right. . \end{equation}
Similar to the stationary case, for $k = 1, \cdots, n$ and $i = 1, \cdots, N$, we define
\begin{equation*}
	\varphi_{i,k}^{\min}(x) := \frac{x - \underline{r}_{k}^{S,i}}{\sqrt{2} h_y \sigma_{N,k}^{\min}} \quad \text{and} \quad \varphi_{i,k}^{\max}(x) := \frac{x - \overline{r}_{k}^{S,i}}{\sqrt{2} h_y \sigma_{N,k}^{\max}},
\end{equation*}
where
\begin{equation*}
	\underline{r}_{k}^{S,i} := \min_{t \in [0,T]} r_k(t,0,b^{\mathcal{S},i}) \quad \text{and} \quad \overline{r}_{k}^{S,i} := \max_{t \in [0,T]} r_k(t,0,b^{\mathcal{S},i}),
\end{equation*}
with samples $b^{\mathcal{S},i} \in \mathcal{B}_{\mathcal{A}}$. Then we can rewrite the computation of the desired probability using the error function as
\begin{equation*} \begin{aligned}
	\mathbb{P}_N \left( b \in M(t) \ \forall t \in [0,T] \right) =&
	\int_{[r_0^{\min},r_0^{\max}] \times [r_0^{\min},r_0^{\max}]} \varrho_{R,N}(z)~ dz \\
	 =&	\frac{1}{4N} \sum_{i=1}^N \left[ \erf \left( \varphi_{i,1}^{\min}(r_0^{\max})  \right) - \erf \left( \varphi_{i,1}^{\min}(r_0^{\min}) \right) \right] \\
	& \cdot \left[ \erf \left( \varphi_{i,1}^{\max}(r_0^{\max}) \right) - \erf \left( \varphi_{i,1}^{\max}(r_0^{\min}) \right) \right].
\end{aligned} \end{equation*}

We define the function
\begin{equation*}
	g_\alpha : \mathbb{R} \rightarrow \mathbb{R}, r^{\max} \mapsto \alpha - \mathbb{P}_N( b \in M(t,r_0^{\max}) \ \forall t \in [0,T] ).
\end{equation*}
Difference to the stationary case is, that the terms in both dimensions depend on the same upper bound, thus we have to use product rule to compute the derivative. It follows
\begin{equation*} \begin{aligned}
\frac{d}{d r_0^{\max}} g_\alpha(r_0^{\max}) = - \frac{1}{4 N} \sum_{i=1}^N &\left[ \erf \left( \varphi_{i,1}^{\min}(r_0^{\max}) \right) - \erf \left( \varphi_{i,1}^{\min}(r_0^{\min}) \right) \right] \\
	&\cdot \frac{\sqrt{2}}{\sqrt{\pi} h_y \sigma_{N,1}^{\max}} \exp \left( - (\varphi_{i,1}^{\max}(r_0^{\max}))^2 \right) \\
	+ &\left[ \erf \left( \varphi_{i,1}^{\max}(r_0^{\max}) \right) - \erf \left( \varphi_{i,1}^{\max}(r_0^{\min}) \right) \right] \\
	&\cdot \frac{\sqrt{2}}{\sqrt{\pi} h_y \sigma_{N,1}^{\min}} \exp \left( -(\varphi_{i,1}^{\min}(r_0^{\max}))^2 \right).
\end{aligned} \end{equation*}
As it is mentioned in \hyperref[remark:mfcq]{\textit{Remark \ref*{remark:mfcq}}}, the LICQ is always fulfilled and we can state the necessary optimality conditions for the approximated problem (\ref{eq:optimizationBoundsDynamic}):
\begin{corollary} \label{corollary:dynamicKKT}
Let $r_0^{*,\max} \in \mathbb{R}$ be a (local) optimal solution of (\ref{eq:optimizationBoundsDynamic}). Since the LICQ holds in $r_0^{*,\max}$, there exists a multiplier $\mu^* \geq 0$, s.t.
\begin{equation*} \begin{aligned}
	f'(r_0^{*,\max}) + \mu^*  g'_\alpha(r_0^{*,\max}) &= 0, \\
	g_\alpha(r_0^{*,\max}) &\leq 0, \\
	\mu^* g_\alpha(r_0^{*,\max}) &= 0.
\end{aligned} \end{equation*}
Thus, $(r_0^{*,\max},\mu^*) \in \mathbb{R}^2$ is a Karush-Kuhn-Tucker point.
\end{corollary}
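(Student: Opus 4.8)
The plan is to recognize problem (\ref{eq:optimizationBoundsDynamic}) as a standard nonlinear program with a single inequality constraint and to invoke the first-order Karush--Kuhn--Tucker necessary conditions. Rewriting the feasible set via the constraint function $g_\alpha$, the problem reads $\min f(r_0^{\max})$ subject to $g_\alpha(r_0^{\max}) \leq 0$, so that the classical KKT theorem yields precisely the listed stationarity, primal feasibility and complementarity relations, \emph{provided} that the objective and the constraint are continuously differentiable at $r_0^{*,\max}$ and that a constraint qualification holds there. Thus the entire proof reduces to verifying these two hypotheses and then citing the theorem.

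First I would record the regularity of the data. The objective $f$ is assumed differentiable, and the continuous differentiability of $g_\alpha$ was already established in the computation immediately preceding the statement: starting from the $\erf$-representation of $\mathbb{P}_N(b \in M(t)\ \forall t \in [0,T])$ and applying the product rule, the derivative $g'_\alpha(r_0^{\max})$ was obtained in closed form. Hence both $f$ and $g_\alpha$ are $C^1$ in a neighbourhood of $r_0^{*,\max}$, which is all the smoothness the KKT theorem requires. Next I would check the constraint qualification. Since (\ref{eq:optimizationBoundsDynamic}) carries only the single inequality $g_\alpha \leq 0$, the LICQ at a point where this constraint is active reduces to the single requirement $g'_\alpha(r_0^{*,\max}) \neq 0$; this is exactly the reasoning already invoked in \hyperref[remark:mfcq]{\textit{Remark \ref*{remark:mfcq}}}. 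Inspecting the explicit formula for $g'_\alpha$, each summand is a product of a difference of error functions (strictly positive whenever $r_0^{\max} > r_0^{\min}$, since $\erf$ is strictly increasing) with a strictly positive exponential factor, so the derivative is strictly negative. In particular $g'_\alpha(r_0^{*,\max}) \neq 0$, and the LICQ holds at the optimum as asserted.

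With differentiability and the constraint qualification in hand, the KKT theorem directly supplies a multiplier $\mu^* \geq 0$ satisfying the three stated conditions, so that $(r_0^{*,\max}, \mu^*)$ is a Karush--Kuhn--Tucker point. I do not expect a genuine obstacle here: the only step needing real attention is the LICQ verification, and the strict negativity of $g'_\alpha$ makes that check immediate. The argument is structurally identical to \hyperref[corollary:pMax]{\textit{Corollary \ref*{corollary:pMax}}} and \hyperref[corollary:p0]{\textit{Corollary \ref*{corollary:p0}}} in the stationary setting, the only formal difference being that here the single scalar variable $r_0^{\max}$ enters both factors of the two-dimensional integral, which is already accounted for by the product-rule form of $g'_\alpha$ used above.
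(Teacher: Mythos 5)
Your proposal is correct and matches the paper's intended argument: the paper states this corollary as an immediate consequence of the standard KKT theorem, relying on the closed-form $\erf$-representation and product-rule derivative of $g_\alpha$ computed just before the statement, and on the single-constraint LICQ reasoning of \hyperref[remark:mfcq]{\textit{Remark \ref*{remark:mfcq}}} (with the nonvanishing of $g'_\alpha$ guaranteed by the strict negativity you verify). Your explicit check that $g'_\alpha(r_0^{*,\max}) \neq 0$ is exactly the detail the paper leaves implicit, so there is nothing to add.
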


\begin{remark}
In the $n$-dimensional case, in which we have bounds at $n$ nodes, the computation of the desired probability is the following:
\begin{equation*} \begin{aligned}
	\mathbb{P}_N \left( b \in M(t) \ \forall t \in [0,T] \right) = \frac{1}{4^n N} \sum_{i=1}^N \prod_{j=1}^n &\left[ \erf \left( \varphi_{i,j}^{\min}(r_j^{\max}) \right) - \erf \left( \varphi_{i,j}^{\min}(r_j^{\min}) \right) \right] \\
	\cdot &\left[ \erf \left( \varphi_{i,j}^{\max}(r_j^{\max}) \right) - \erf \left( \varphi_{i,j}^{\max}(r_j^{\min}) \right) \right].
\end{aligned} \end{equation*}

Mention that this is an $2n$-dimensional KDE since every boundary function provides two samples, one for the minimal values and one for the maximal values. The partial derivatives with respect to $r_j^{\max}$ are given by
\begin{equation*} \begin{aligned}
	\frac{\partial}{\partial r_j^{\max}} \mathbb{P}_N \left( b \in M(t) \ \forall t \in [0,T] \right) = \\
	= \frac{1}{4^n N} \sum_{i=1}^N \prod_{j=1, j \neq k}^n &\left[ \erf \left( \varphi_{i,j}^{\min}(r_j^{\max}) \right) - \erf \left( \varphi_{i,j}^{\min}(r_j^{\min}) \right) \right] \\
	\cdot &\left[ \erf \left( \varphi_{i,j}^{\max}(r_j^{\max}) \right) - \erf \left( \varphi_{i,j}^{\max}(r_j^{\min}) \right) \right]. \\
	\cdot \bigg[ &\left[ \erf \left( \varphi_{i,k}^{\min}(r_k^{\max}) \right) - \erf \left( \varphi_{i,k}^{\min}(r_k^{\min}) \right) \right] \\
	 & \qquad \cdot \frac{\sqrt{2}}{\sqrt{\pi} h_y \sigma_{N,k}^{\max}} \exp \left( -( \varphi_{i,k}^{\max}(r_k^{\max}) )^2 \right) \\
	+ &\left[ \erf \left( \varphi_{i,k}^{\max}(r_k^{\max}) \right) - \erf \left( \varphi_{i,k}^{\max}(r_k^{\min}) \right) \right] \\
	& \qquad \cdot \frac{\sqrt{2}}{\sqrt{\pi} h_y \sigma_{N,k}^{\min}} \exp \left( -( \varphi_{i,k}^{\min}(r_k^{\max}) )^2 \right) \bigg],
\end{aligned} \end{equation*}
where $(\sigma_{N,k}^{\max})^2$ resp. $(\sigma_{N,k}^{\min})^2$ are the variances of the sampling of the maximal values resp. of the minimal values.
\end{remark}

\subsection{Application to a realistic network}

For this section we basically use the graph of the GasLib-11 but since this graph was designed for gas transportation we slightly vary it. First we assume that the compressor edges are normal edges. As in section \hyperref[sec:GasLib11]{\textit{Section \ref*{sec:GasLib11}}} we assume that the valve is closed, s.t. the edge between node $v_2$ and $v_4$ vanishes. The water is contaminated at the nodes $v_6$, $v_9$ and $v_{10}$ and the pollution distributes in the graph. We assume that the pollution equally distributes at node $v_7$, i.e., half of the pollution distributes in edge $e_6$, the other half in $e_7$. We define pollution bounds $r_0^{\min}, r_0^{\max} \in \mathbb{R}_{\geq 0}$ for node $v_0$, $r_1^{\min}, r_1^{\max} \in \mathbb{R}_{\geq 0}$ for node $v_1$ and $r_5^{\min}, r_5^{\max} \in \mathbb{R}_{\geq 0}$ for node $v_5$. We want these bounds to be satisfied. A scheme of this network is shown in \hyperref[figure:pollutionNetwork]{\textit{Figure \ref*{figure:pollutionNetwork}}}. For simplicity we assume that $m = -0.1$, $d = -1$ and $L = 1$ for every edge.

\begin{figure}[htbp]
	\centering
	\resizebox{\textwidth}{!}{
		\begin{tikzpicture}
			% Nodes
			\node [minimum size=0.5cm] (A) at (0,0) [circle, draw] {$0$};
			\node [minimum size=0.5cm] (B) at (2,0) [circle, draw] {$1$};
			\node [minimum size=0.5cm] (C) at (4.5,0) [circle, draw] {$2$};
			\node [minimum size=0.5cm] (D) at (6.5,1) [circle, draw] {$3$};
			\node [minimum size=0.5cm] (E) at (6.5,-1) [circle, draw] {$4$};
			\node [minimum size=0.5cm] (F) at (6.5,-3) [circle, draw] {$5$};
			\node [minimum size=0.5cm] (G) at (6.5,3) [circle, draw] {$6$};
			\node [minimum size=0.5cm] (H) at (8.5,0) [circle, draw] {$7$};
			\node [minimum size=0.5cm] (I) at (11,0) [circle, draw] {$8$};
			\node [minimum size=0.5cm] (J) at (13,1) [circle, draw] {$9$};
			\node [minimum size=0.5cm] (K) at (13,-1) [circle, draw] {$10$};
		
			% Edges
			\draw[->, >={Triangle[length=0pt 2*8,width=0pt 8]}] (A) to node[above] {$e_1$} (B);
			\draw[->, >={Triangle[length=0pt 2*8,width=0pt 8]}] (B) to node[above] {$e_2$} (C);
			\draw[->, >={Triangle[length=0pt 2*8,width=0pt 8]}] (F) to node[right] {$e_3$} (E);
			\draw[->, >={Triangle[length=0pt 2*8,width=0pt 8]}] (C) to node[above] {$e_4$} (D);
			\draw[->, >={Triangle[length=0pt 2*8,width=0pt 8]}] (D) to node[right] {$e_5$} (G);
			\draw[->, >={Triangle[length=0pt 2*8,width=0pt 8]}] (D) to node[above] {$e_6$} (H);
			\draw[->, >={Triangle[length=0pt 2*8,width=0pt 8]}] (E) to node[above] {$e_7$} (H);
			\draw[->, >={Triangle[length=0pt 2*8,width=0pt 8]}] (H) to node[above] {$e_8$} (I);
			\draw[->, >={Triangle[length=0pt 2*8,width=0pt 8]}] (I) to node[above] {$e_9$} (J);
			\draw[->, >={Triangle[length=0pt 2*8,width=0pt 8]}] (I) to node[above] {$e_{10}$} (K);
			% Compressor Stations and Valve
			\draw[->, >={Triangle[length=0pt 2*8,width=0pt 8]}, dashed] (C) to node[above] {valve} (E);
		
			% Inflows and Outflows
			\node [minimum size=0.5cm] (L) at (0,-1) {};
			\node [minimum size=0.5cm] (M) at (2,-1) {};
			\node [minimum size=0.5cm] (N) at (5.2,-3) {};
			\draw[<-, >={Triangle[length=0pt 1*8,width=0pt 8]}, thick] (L) to (A);
			\draw[<-, >={Triangle[length=0pt 1*8,width=0pt 8]}, thick] (M) to (B);
			\draw[<-, >={Triangle[length=0pt 1*8,width=0pt 8]}, thick] (N) to (F);
			\node [minimum size=0.5cm] (O) at (8.5,3) {Contamination};
			\node [minimum size=0.5cm] (P) at (13,2) {Contamination};
			\node [minimum size=0.5cm] (Q) at (13,-2) {Contamination};
			\draw[<-, >={Triangle[length=0pt 1*8,width=0pt 8]}, thick] (G) to (O);
			\draw[<-, >={Triangle[length=0pt 1*8,width=0pt 8]}, thick] (J) to (P);
			\draw[<-, >={Triangle[length=0pt 1*8,width=0pt 8]}, thick] (K) to (Q);
		\end{tikzpicture}
	}
	\caption{A network scheme for water pollution}
	\label{figure:pollutionNetwork}
\end{figure}
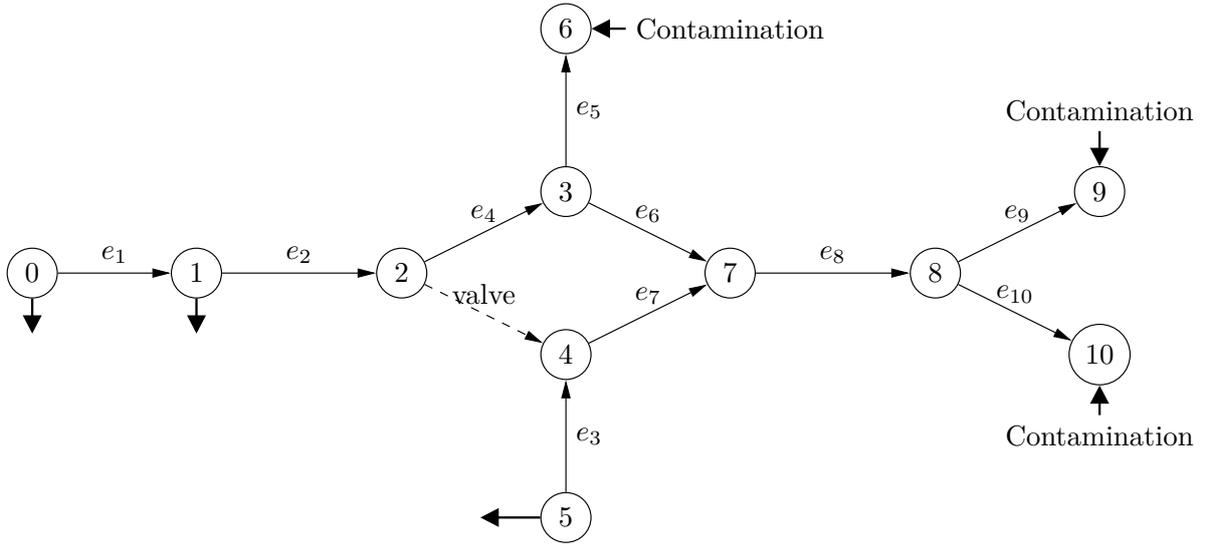

The boundary functions are given by
\begin{equation} \label{eq:deterministicBoundaryFunctions} \begin{aligned}
	b_6(t) 		&= \sin(t)+5, \\
	b_9(t) 		&= \frac{1}{4} \vert t-3 \vert +2, \\
	\text{and} \quad b_{10}(t)	&= \frac{1}{(t-1)^2 + \frac{1}{2}} + 3.
\end{aligned} \end{equation}
The initial conditions are given by
\begin{equation*} \begin{aligned}
	&r_{e_{10},0}(x) = \frac{11}{3} \exp \left( \frac{m}{d} ( x - L ) \right), \\
	&r_{e_9,0}(x) = \frac{11}{4} \exp \left( \frac{m}{d} ( x - L ) \right), \\
	&r_{e_8,0}(x) = \big( r_{e_9,0}(0) + r_{e_{10},0}(0) \big) \exp \left( \frac{m}{d} ( x - L ) \right), \\
	&r_{e_7,0}(x) = \frac{1}{2} r_{e_8,0}(0) \exp \left( \frac{m}{d} ( x - L ) \right), \\
	&r_{e_6,0}(x) = \frac{1}{2} r_{e_8,0}(0) \exp \left( \frac{m}{d} ( x - L ) \right), \\
	&r_{e_5,0}(x) = 5 \exp \left( \frac{m}{d} ( x - L ) \right), \\
	&r_{e_4,0}(x) = \big( r_{e_5,0}(0) + r_{e_6,0}(0) \big) \exp \left( \frac{m}{d} ( x - L ) \right), \\
	&r_{e_3,0}(x) = r_{e_7,0}(0) \exp \left( \frac{m}{d} ( x - L ) \right), \\
	&r_{e_2,0}(x) = r_{e_4,0}(0) \exp \left( \frac{m}{d} ( x - L ) \right), \\
	\text{and} \quad &r_{e_1,0}(x) = r_{e_2,0}(0) \exp \left( \frac{m}{d} ( x - L ) \right).
\end{aligned} \end{equation*}
The initial conditions are chosen, s.t. the solution at the nodes is constant as long as information from the boundary nodes needes to reach the nodes and the initial conditions satisfy the $C^0$-compatibility with the boundary conditions, which is
\begin{equation*}
	r_{e_5,0}(L) = b_6(0), \quad r_{e_9,0}(L) = b_9(0) \quad \text{and} \quad r_{e_{10},0}(L) = b_{10}(0).
\end{equation*}
The boundary functions and the solution at the nodes $v_0$, $v_1$ and $v_5$ are shown in \hyperref[figure:deterministicData]{\textit{Figure \ref*{figure:deterministicData}}}.
\begin{figure}[htbp]
	\centering
	\begin{subfigure}[c]{7.5cm}
		\centering
		\includegraphics[width=7.5cm]{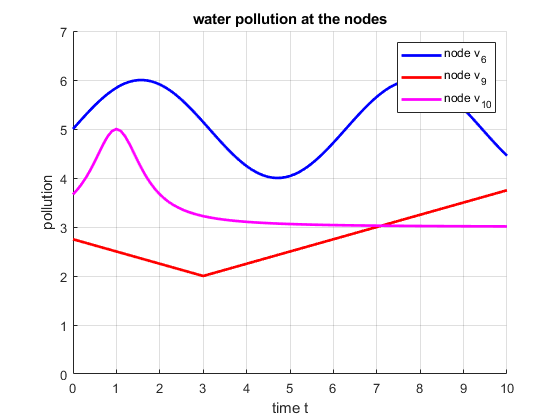}
		\caption{Deterministic boundary data at node $v_6$, $v_9$, $v_{10}$}
	\end{subfigure} \hspace{.5cm}
	\begin{subfigure}[c]{7.5cm}
		\centering
		\includegraphics[width=7.5cm]{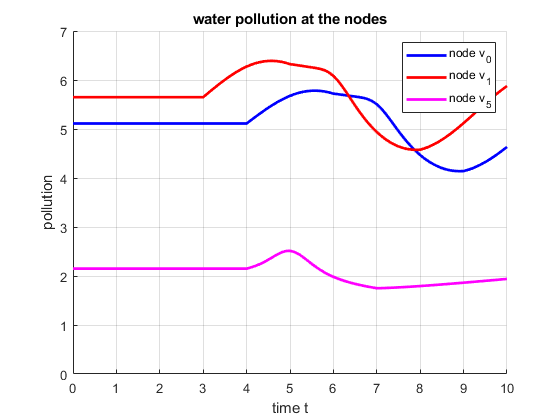}
		\caption{Solution at node $v_0$, $v_1$ and $v_5$}
	\end{subfigure}
	\caption{Deterministic boundary data and deterministic solution}
	\label{figure:deterministicData}
\end{figure}
Since the boundary functions do not satisfy $b_i(0) = 0$ ($i \in \{6,9,10\}$), we compute the Fourier series for the functions $(b_i(t)-b_i(0))$, randomize them by multiplying a Gaussian distributed random number (with $\mu = 1$ and $\sigma^2 = 0.1$) to every summand of the series and then we add the constants $b_i(0)$ to the random Fourier series. For the implementation we use the first $30$ terms of the Fourier series, i.e., $N_F = 30$. Some random scenarios for the boundary functions are shown in  \hyperref[figure:randomScenarios]{\textit{Figure \ref*{figure:randomScenarios}}}.
\begin{figure}[t!]
	\centering
	\begin{subfigure}[c]{7cm}
		\centering
		\includegraphics[width=7cm]{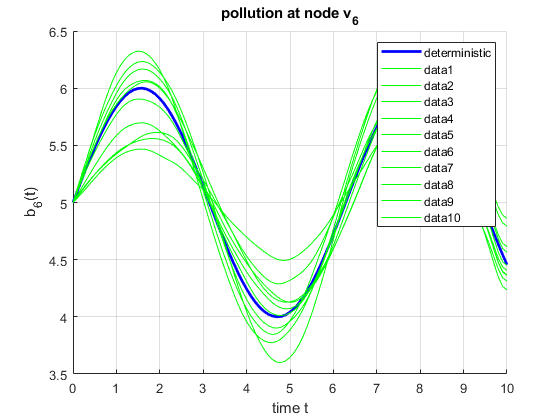}
		\caption{Random scenarios for boundary data $b_6(t)$}
	\end{subfigure} \hspace{.5cm}
	\begin{subfigure}[c]{7cm}
		\centering
		\includegraphics[width=7cm]{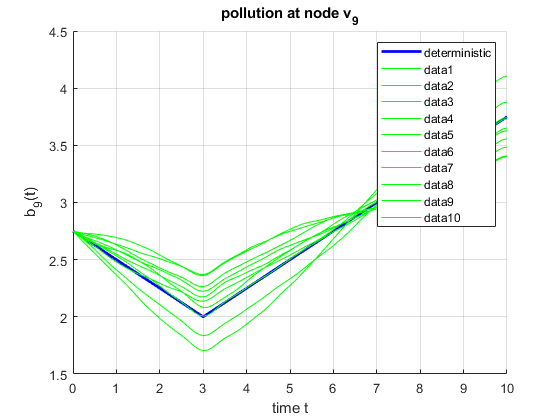}
		\caption{Random scenarios for boundary data $b_9(t)$}
	\end{subfigure} \hspace{.5cm}\\
	\begin{subfigure}[c]{7cm}
		\centering
		\includegraphics[width=7cm]{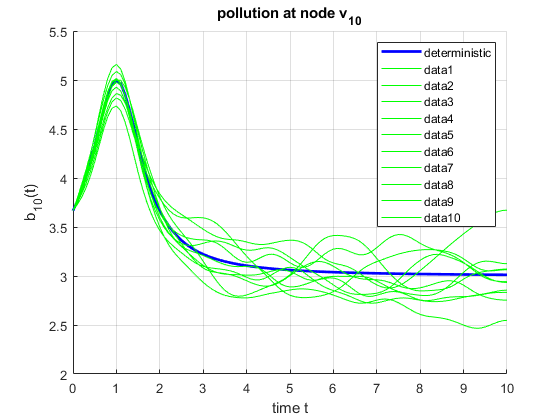}
		\caption{Random scenarios for boundary data $b_{10}(t)$}
	\end{subfigure}
	\caption{Random boundary scenarios at $v_6$, $v_9$ and $v_{10}$}
	\label{figure:randomScenarios}
\end{figure}
The lower pollution bounds are given by
\begin{equation*}
	r^{\min} = \begin{bmatrix} r_0^{\min} \\ r_1^{\min} \\ r_5^{\min} \end{bmatrix} = \begin{bmatrix} 3.5 \\ 4 \\ 1 \end{bmatrix}.
\end{equation*}
Consider the linear function
\begin{equation*}
	f : \mathbb{R}^3 \rightarrow \mathbb{R}, \quad f : r^{\max} \mapsto c^T r^{\max},
\end{equation*}
with $c = \mathbb{1}_3$. We first solve the deterministic problem
\begin{equation} \label{eq:optGasLib11DynDeter} \begin{aligned}
	\min_{r^{\max} \geq r^{\min}} \quad &f(r^{\max}) \\
	\text{s.t.} \quad &b(t) \in M(r^{\max})\ \forall t \in [0,T],
\end{aligned} \end{equation}
with $T = 10$ and the boundary functions are given in (\ref{eq:deterministicBoundaryFunctions}). As in the stationary case we use the default setting of the MATLAB$\textsuperscript{\textregistered}$-routine \textit{fmincon} to solve (\ref{eq:optGasLib11DynDeter}), which is an interior-point algorithm. It returns
\begin{equation*}
	r^{\max}_{\text{det}} = \begin{bmatrix} r_0^{\max} \\ r_1^{\max} \\ r_5^{\max} \end{bmatrix} = \begin{bmatrix} 5.78 \\ 6.39 \\ 2.51 \end{bmatrix},
\end{equation*}
as optimal deterministic solution, i.e., as the lowest upper pollution bound for the nodes $v_0$, $v_1$ and $v_5$. Now we consider uncertain water contamination at the nodes $v_6$, $v_9$ and $v_{10}$. We compute the probability that the random contamination satisfies the bounds $[r^{\min}, r^{\max}_{\text{det}}]$ at the nodes $v_0$, $v_1$ and $v_5$, which is $\mathbb{P}(b(t) \in M(r^{\max}_{\text{det}})\ \forall t \in [0,T])$. This is shown for $8$ tests (each with $1 \cdot 10^5$ samples) in \hyperref[table:GasLib-11dynamicProbability]{\textit{Table \ref*{table:GasLib-11dynamicProbability}}}. \\
\begin{table} [h]
	\centering
	\begin{tabular}{| c | c | c | c | c | c | c | c | c |}
		\hline
			& Test 1	& Test 2	& Test 3	& Test 4	& Test 5	& Test 6	& Test 7	& Test 8	\\
		\hline
		MC	& $37.71\%$	& $37.77\%$	& $37.83\%$	& $37.65\%$	& $37.80\%$	& $37.54\%$	& $37.92\%$	& $38.01\%$	\\
		\hline	
		KDE & $37.57\%$ & $37.62\%$ & $37.68\%$ & $37.51\%$ & $37.67\%$ & $37.39\%$ & $37.76\%$ & $37.88\%$ \\
		\hline
	\end{tabular}
	\caption{Probability $\mathbb{P}(b \in M(p^{\max}_{\text{det}}))$ for the optimal deterministic upper pollution bounds}
	\label{table:GasLib-11dynamicProbability}
\end{table}
The mean MC probability is $37.78\%$ and the mean KDE probability is $37.63\%$. For a confidence level of $95\%$ the confidence interval for the MC probability is $[37.65\%, 37.90\%]$ and the confidence interval for the KDE probability is $[37.51\%, 37.76\%]$. Just like in the stationary case, the deterministic upper bound is unsatisfactory, so we consider the probabilistic constrained optimization problem (\ref{eq:optimizationBoundsDynamic}) with $\alpha := 0.75$. The problems described in \hyperref[sec:GasLib11]{\textit{Section \ref*{sec:GasLib11}}} also occur here but \textit{fmincon} returns a solution if we choose the optimal deterministic solution as starting point. The results of $8$ Tests with $1 \cdot 10^5$ scenarios are shown in \hyperref[table:dynamicGasLib-11Results]{\textit{Table \ref*{table:dynamicGasLib-11Results}}}. This time we show $3$ decimal places because otherwise the solutions would be equal. In $8$ more Tests we solve (\ref{eq:optimizationBoundsDynamic}) by using \hyperref[corollary:dynamicKKT]{\textit{Corollary \ref*{corollary:dynamicKKT}}}. The results vary from the optimal solutions computed by \textit{fmincon} in a range of $1 \cdot 10^{-7}$. So here one could also expect that the necessary optimality conditions are sufficient but we do not analyze this here. \\
\begin{table} [h]
	\centering
	\begin{tabular}{| c | c | c | c | c | c | c | c |}
		\hline
		Test 1	& Test 2	& Test 3	& Test 4	& Test 5	& Test 6	& Test 7	& Test 8	\\
		\hline
		$\begin{bmatrix} 5.936 \\ 6.560 \\ 2.577 \end{bmatrix}$	& $\begin{bmatrix} 5.938 \\ 6.562 \\ 2.574 \end{bmatrix}$	& $\begin{bmatrix} 5.937 \\ 6.561 \\ 2.576 \end{bmatrix}$	& $\begin{bmatrix} 5.935 \\ 6.559 \\ 2.578 \end{bmatrix}$	& $\begin{bmatrix} 5.936 \\ 6.560 \\ 2.575 \end{bmatrix}$	& $\begin{bmatrix} 5.935 \\ 6.559 \\ 2.576 \end{bmatrix}$	& $\begin{bmatrix} 5.937 \\ 6.561 \\ 2.577 \end{bmatrix}$	& $\begin{bmatrix} 5.936 \\ 6.560 \\ 2.575 \end{bmatrix}$	\\
		\hline		
	\end{tabular}
	\caption{Stochastic optimal upper pollution bounds $r^{\max}_{\text{stoch}}$}
	\label{table:dynamicGasLib-11Results}
\end{table}
One can see, that all results are almost equal. The optimal upper pollution bounds of the probabilistic constrained optimization problem (\ref{eq:optimizationBoundsDynamic}) are larger then the optimal upper pollution bounds of the deterministic optimization problem (\ref{eq:optGasLib11DynDeter}). The computation time is quite similar to the stationary case with the difference that solving the necessary optimality conditions needed much more time here (about $2$ minutes per test).

\section{Conclusion}

In this paper, we have shown two different ways to evaluate probabilistic constraints in the context of hyperbolic balance laws on graphs for both, a stationary and a dynamic setting with box constraints for the solution. \\

The spheric radial decomposition provides a good method for computing the probabilities for random boundary data to be feasible in the stationary case and in the dynamic case for box constraints for a certain point in time. Because the spheric radial decomposition is explicitly based on the analytical solution, it leads to good results. But as soon as the analytical solution is not given, the inequalities cannot be derived and so the spheric radial decomposition becomes an almost purely numerical method. \\

A kernel density estimator does not need the analytical solution, a numerically computed solution is sufficient, and it provides an estimated, but explicit representation of the probability density function, which can be used for computing the probabilistic constraint and for deriving necessary optimality conditions for probabilistic constrained optimization problems. In addition the kernel density estimator is smooth even if the exact probability density function is nonsmooth. The examples showed, that the kernel density estimator provides results almost as good as the results from the spheric radial decomposition and a classical Monte Carlo approach. Of course, the Monte Carlo approach is faster and easier to use, but it cannot be used to get any kind of analytical result like e.g. the necessary optimality conditions. \\

Another advantage of both methods is, that they do not depend on the topology of the graph. The topology of the graph influences the analytical solution, but the methods themselves work independently of the complexity of the graph. Further, the idea of the kernel density estimator can easily be used for any kind of partial differential equation with random boundary data, because it only requires a sufficiently accurate solution of the PDE, e.g., by using numerical methods, and this has been a goal of many papers in the last decades.

%%%%%%%%%%%%%%%%%%%%%%%%%%%%%
%%% Acknowledgements etc. %%%
%%%%%%%%%%%%%%%%%%%%%%%%%%%%%

\subsubsection*{Conflict of Interests}

The authors declare that there is no conflict of interest regarding the publication of this paper.

\subsubsection*{Acknowledgements}

The authors are supported by the Deutsche Forschungsgemeinschaft (DFG, German Research Foundation) within the collaborative research center TRR154 \grqq Mathematical modeling, simulation and optimisation using the example of gas networks\grqq\ (Project-ID 239904186, TRR154/2-2018, TP B01 (Lang, Strauch), TP C03 (Gugat, Schuster) and TP C05 (Giesselmann, Gugat)).

%%%%%%%%%%%%%%%%%%%%
%%% Bibliography %%%
%%%%%%%%%%%%%%%%%%%%

\small

\bibliographystyle{natdin}

\bibliography{bibliography}
\addcontentsline{toc}{section}{References}

\end{document}